\documentclass{amsart}
\usepackage{hyperref}
\usepackage{xcolor}

% Symbols

\newcommand{\R}{\mathbf{R}}
\newcommand{\Q}{\mathbf{Q}}

\newcommand{\N}{\mathbf{N}}
\newcommand{\tto}{\Rightarrow}

\newcommand{\eps}{\epsilon}

\newcommand{\cF}{\mathcal{F}}

\newcommand{\cP}{\mathcal{P}}

% Operators
\DeclareMathOperator{\E}{\mathbf{E}}
\let\P\relax
\DeclareMathOperator{\P}{\mathbf{P}}
\DeclareMathOperator{\I}{\mathbf{1}}

\DeclareMathOperator\rank{rank}
\DeclareMathOperator\tr{tr}

% Theorem-like environments
\theoremstyle{plain}
\newtheorem{theorem}{Theorem}%[section]

\newtheorem{lemma}[theorem]{Lemma}
\newtheorem{corollary}[theorem]{Corollary}

\theoremstyle{definition}

\newtheorem{remark}[theorem]{Remark}

\theoremstyle{remark}

% Equation Numbering
\numberwithin{equation}{section}
\numberwithin{theorem}{section}

% Commands specific to this article

\newcommand{\musc}{\mu_{\mathrm{sc}}}
\newcommand{\w}{w_{ij}}

\newcommand{\fl}[1]{\lfloor{#1}\rfloor}
\newcommand{\ce}[1]{\lceil{#1}\rceil}
\newcommand{\bi}{\mathbf{i}}
\newcommand{\bj}{\mathbf{j}}
\newcommand{\bc}{\mathbf{c}}

\begin{document}
\title[Characterization of convergence to the semicircle]{Necessary and
Sufficient Conditions for Convergence to the Semicircle Distribution}
\author{Calvin Wooyoung Chin}
\date{\today}
\begin{abstract}
We consider random Hermitian matrices with independent upper triangular entries.
Wigner's semicircle law says that under certain additional assumptions,
the empirical spectral distribution converges to the semicircle distribution.
We characterize convergence to semicircle in terms of the variances of
the entries, under natural assumptions such as the Lindeberg condition.
The result extends to certain matrices with entries having infinite second moments.
As a corollary, another characterization of semicircle convergence is given
in terms of convergence in distribution of the row sums to the standard normal distribution.
\end{abstract}
\maketitle

\section{Introduction}
Let $W_n$, for each $n \in \N$, be a random $n\times n$ Hermitian
matrix whose upper triangular entries are independent.
We call $(W_n)_{n\in\N}$ a \emph{Hermitian Wigner ensemble}.
In case $W_n$ is real symmetric for all $n\in\N$, we call $(W_n)_{n\in\N}$
a \emph{symmetric Wigner ensemble}.
We write $W_n = (w_{ij})_{i,j=1}^n$ throughout this paper.
If $\lambda_1(W_n) \ge \cdots \ge \lambda_n(W_n)$ are the eigenvalues of $W_n$
counted with multiplicity, then the \emph{empirical spectral distribution}
$\mu_{W_n}$ of $W_n$ is defined by
\[ \mu_{W_n} := \frac{1}{n}\sum_{i=1}^n \delta_{\lambda_i(W_n)}. \]
Since $\mu_{W_n}$ is a random measure, we can think of the mean measure
$\E\mu_{W_n}$, which is defined and treated in Appendix \ref{app:exp_meas}.

Let us use the term \emph{semicircle law} to refer to a class of
theorems that state, under certain conditions, that $\mu_{W_n}$ converges
in some sense to the \emph{semicircle distribution} $\musc$ on $\R$ given by
\[ \musc(dx) := \sqrt{(4-x^2)_+}\,dx. \]
(We let $x_+ := \max\{x,0\}$.) Wigner initiated the spectral study of
random matrices by proving the following very first version of the
semicircle law in \cite{Wig55, Wig58}.

\begin{theorem}[semicircle law, Wigner]
Let $(W_n)_{n\in\N}$ be a symmetric Wigner ensemble such that
the upper triangular entries of $W_n$ have identical symmetric
distribution with mean zero and variance $1/n$.
If for each $k\in\N$ we have
\begin{equation} \label{eq:wigner_bound}
\E|\w|^k \le B_kn^{-k/2} \qquad \text{for some $B_k < \infty$
independent of $n,i,j$,}
\end{equation}
then $\E\mu_{W_n} \tto \musc$.
Here $\tto$ denotes convergence in distribution.
\end{theorem}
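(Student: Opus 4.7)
The plan is to apply the method of moments. Recall that the $k$-th moment of $\musc$ is the Catalan number $C_{k/2}$ if $k$ is even and $0$ if $k$ is odd, and since $\musc$ has compact support its moments satisfy Carleman's condition, so $\musc$ is determined by its moment sequence. Hence $\E\mu_{W_n}\tto\musc$ will follow once we show, for every fixed $k\in\N$, that
\[ \int x^k\,d\E\mu_{W_n}(x) \;=\; \frac{1}{n}\E\tr W_n^k \;\longrightarrow\; \int x^k\,d\musc(x). \]

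First I would expand
\[ \E\tr W_n^k \;=\; \sum_{\mathbf{i}=(i_1,\ldots,i_k)\in[n]^k} \E\bigl[w_{i_1i_2}w_{i_2i_3}\cdots w_{i_ki_1}\bigr]. \]
Each index sequence $\mathbf{i}$ is a closed walk of length $k$ on the complete graph with loops on $[n]$. Because the entries are independent with symmetric distribution (in particular mean zero), a term vanishes unless every edge of the walk is traversed at least twice; in particular, the number of distinct edges is at most $\lfloor k/2\rfloor$. The moment bound (\ref{eq:wigner_bound}) gives $|\E[w_{i_1i_2}\cdots w_{i_ki_1}]|\le n^{-k/2}\prod_e B_{m_e}$, where $m_e\ge 2$ is the multiplicity of edge $e$.

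I would then organize walks by their isomorphism type. For $k=2m$, the dominant contribution comes from walks traversing exactly $m$ distinct edges, each exactly twice, whose underlying graph is a tree on $m+1$ distinct vertices; such walks are in bijection with pairs (plane tree with $m$ edges, injection of its vertices into $[n]$). There are $C_m\cdot n(n-1)\cdots(n-m)\sim C_m n^{m+1}$ of them, each contributing $(1/n)^m$, so the aggregate contribution to $n^{-1}\E\tr W_n^{2m}$ tends to $C_m$. Every other walk is of one of two types: either (a) some edge has multiplicity at least $3$, or (b) the underlying graph has a cycle. In case (a), the excess multiplicities give additional factors of $n^{-1/2}$ per extra traversal, more than offsetting the number of vertex labelings; in case (b), there are at most $m$ distinct vertices, losing a factor of $n$. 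Either way, the contribution is $O(n^{-1})$. For $k$ odd, no closed walk can have every edge traversed an even number of times while visiting a tree of $(k+1)/2$ distinct vertices, so the dominant type is absent and $n^{-1}\E\tr W_n^k\to 0$.

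The main obstacle will be the bookkeeping for the error term, i.e., verifying rigorously that both kinds of non-dominant walks contribute $o(1)$ after normalization. The cleanest way is to classify walks by \emph{type}, the isomorphism class of the sequence of indices modulo vertex relabeling. There are only finitely many types of length $k$, the number of labelings of a type using distinct vertices in $[n]$ is at most $n^{v}$ with $v$ the number of distinct vertices, and the product of moment bounds from (\ref{eq:wigner_bound}) contributes $n^{-\sum_e m_e/2}=n^{-k/2}$. One then checks via the inequality $v\le 1+e$ on any connected graph (equality iff a tree) and $\sum_e m_e=k$ that the net scaling $n^{v-1-k/2}$ vanishes unless the graph is a tree with all multiplicities equal to $2$. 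This reduces the whole estimate to a counting identity for the number of plane trees with $m$ edges, which is $C_m$.
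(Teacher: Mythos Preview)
Your moment-method argument is correct and is the classical proof of Wigner's theorem. The paper itself does not give a proof of this statement: it is quoted as historical background and attributed to \cite{Wig55,Wig58}, with the remark immediately afterward that the moment bound \eqref{eq:wigner_bound} for $k=3$ already implies the Lindeberg condition \eqref{eq:lind}, so Wigner's theorem is subsumed by the later Theorem~\ref{thm:wigner_bs}.

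That said, the machinery the paper builds in Sections~\ref{sec:comp_mom} and~\ref{sec:suff} to prove the sufficiency direction of Theorem~\ref{thm:red_wigner} is exactly the argument you describe, adapted to entries with non-identical variances and to the hypothesis $|w_{ij}|\le\eps_n$ in place of the moment bounds. The decomposition into canonical closed walks, Lemma~\ref{lem:once_zero} (edges traversed once contribute zero), Lemma~\ref{lem:class_conv} (only tree walks with each edge traversed twice survive in the limit), and the Dyck-path enumeration of $\Gamma_k$ yielding the Catalan numbers are precisely your steps. So your approach coincides with the paper's method.
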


Subsequent works by \cite{Arn71}, \cite{Pas73}, and others led to
the following much more general semicircle law.

\begin{theorem}[semicircle law, {\cite[Theorem 2.9]{BS10}}] \label{thm:wigner_bs}
Let $(W_n)_{n\in\N}$ be a Hermitian Wigner ensemble such that
the upper triangular entries of $W_n$ are of mean zero and variance $1/n$.
If
\begin{equation} \label{eq:lind}
\frac{1}{n} \sum_{i,j=1}^n \E[|\w|^2;|\w|>\eps] \to 0
\qquad \text{for all $\eps > 0$,}
\end{equation}
then $\mu_{W_n} \tto \musc$ a.s.
\end{theorem}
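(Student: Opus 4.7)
The plan is to use the method of moments together with a truncation step. Given the Lindeberg condition \eqref{eq:lind}, I first choose a sequence $\eta_n \downarrow 0$ slowly enough that
\[ \frac{1}{n}\sum_{i,j=1}^n \E[|\w|^2;|\w|>\eta_n] \to 0. \]
Define a truncated matrix $\widehat W_n$ with entries $\widehat w_{ij} := w_{ij}\I\{|\w|\le \eta_n\}$ on and above the diagonal (Hermitized below), then let $\widetilde W_n$ be the result of subtracting means and rescaling so the entries have mean $0$ and variance exactly $1/n$. Via the Hoffman--Wielandt inequality applied to $W_n - \widetilde W_n$, together with standard bounds relating Lévy distance of empirical spectral distributions to the normalized Hilbert--Schmidt norm of the perturbation, I would show that $\mu_{W_n}$ and $\mu_{\widetilde W_n}$ have vanishing Lévy distance almost surely, reducing the claim to $\mu_{\widetilde W_n}\tto\musc$ a.s. for a matrix with centered entries, variance $1/n$, and uniform bound $|\widetilde w_{ij}| = O(\eta_n)$.

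For this reduced problem, I compute the expected moments
\[ m_k^{(n)} := \frac{1}{n}\E\tr(\widetilde W_n^k) = \frac{1}{n}\sum_{i_1,\dots,i_k} \E\!\left[\widetilde w_{i_1i_2}\widetilde w_{i_2i_3}\cdots \widetilde w_{i_ki_1}\right]. \]
Indexing each summand by its closed walk on $\{1,\dots,n\}$ and classifying walks by the multigraph of edges traversed, the usual dichotomy applies: a walk in which some edge is traversed exactly once contributes $0$ by mean-zero; walks in which every edge is traversed at least twice number at most $n^{1+\lfloor k/2\rfloor}$, so after dividing by $n$ only walks where every edge is traversed exactly twice survive, and among these only the non-crossing (Dyck) pair partitions contribute, since the others leave too few distinct vertices. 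The $\eta_n\to 0$ bound ensures higher moments of the entries don't spoil the estimate. The count gives Catalan numbers for even $k$ and $0$ for odd $k$, matching the moments of $\musc$.

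For almost-sure convergence, I would bound $\Var\bigl(\tfrac{1}{n}\tr\widetilde W_n^k\bigr) = O(n^{-2})$ by a parallel walk-enumeration argument on two closed walks that must share edges to produce a nonzero covariance, and then invoke Borel--Cantelli. Since $\musc$ is compactly supported and hence determined by its moments, this moment convergence implies weak convergence of $\mu_{\widetilde W_n}$ to $\musc$ a.s.\ along a countable dense set of test functions, and Portmanteau upgrades this to $\mu_{\widetilde W_n}\tto\musc$ a.s.

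The main obstacle is the truncation step rather than the combinatorics. The entries are neither identically distributed nor symmetric, so care is needed to check that centering introduces an $o(1)$ perturbation (the sum of means is controlled by Cauchy--Schwarz against \eqref{eq:lind}) and that rescaling by $(1-o(1))$ does not disturb the spectrum; the Hilbert--Schmidt norm of $W_n - \widetilde W_n$ must be shown to be $o(n)$ almost surely, which requires concentration (e.g.\ truncating at a slightly larger scale $\sqrt{\eta_n}$ and applying a second-moment calculation on $\|W_n - \widehat W_n\|_{\mathrm{HS}}^2$). Threading the Lindeberg condition cleanly through all three stages --- truncation, centering/rescaling, and the subsequent moment estimates --- is the delicate part, while the rest is classical.
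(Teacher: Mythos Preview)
Your proposal is correct and follows the classical Bai--Silverstein route. The paper does not prove Theorem~\ref{thm:wigner_bs} directly---it is cited from \cite{BS10}---but its proof of the more general Theorem~\ref{thm:red_wigner} (Sections~\ref{sec:comp_mom} and~\ref{sec:suff}, with the reductions in Appendix~\ref{app:reduct}) uses the same moment method, and Theorem~\ref{thm:wigner_bs} is the special case where all variances equal $1/n$. The technical choices differ in three places. First, for truncation the paper uses the rank inequality (Lemma~\ref{lem:rank_ineq}) together with a Bernstein bound on the number of large entries, rather than Hoffman--Wielandt; the centering step does use the perturbation inequality Lemma~\ref{lem:pert_ineq}, which is of Hoffman--Wielandt type. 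Second, the paper does not rescale to exact variance $1/n$: it works directly with the truncated-and-centered entries, so its walk enumeration in Section~\ref{sec:comp_mom} is set up for non-identical variances and then invokes condition~\eqref{eq:row_one}, which here is trivially satisfied. Third, for almost-sure convergence the paper appeals to the concentration inequality of Lemma~\ref{lem:concen_spec} via Lemma~\ref{lem:as_reduct}, rather than bounding $\Var\bigl(n^{-1}\tr W_n^k\bigr)=O(n^{-2})$ and applying Borel--Cantelli. Your route is closer to the textbook treatment and requires slightly more bookkeeping at the rescaling step (the entry-wise normalizers are not all equal, so you must control the Hadamard perturbation via the Lindeberg condition); the paper's route avoids rescaling at the cost of a more general moment computation, and its use of Lemma~\ref{lem:concen_spec} cleanly decouples the computation of $\E\mu_{W_n}$ from the upgrade to a.s.\ convergence.
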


Note that \eqref{eq:wigner_bound} for $k=3$ implies \eqref{eq:lind} since
\[ \frac{1}{n} \sum_{i,j=1}^n \E[|\w|^2;|\w|>\eps]
\le \frac{1}{\eps n} \sum_{i,j=1}^n \E|\w|^3
\le \frac{B_3}{\eps\sqrt{n}} \to 0. \]
Let us call \eqref{eq:lind} the \emph{Lindeberg condition},
following the Lindeberg--Feller central limit theorem.
Girko \cite[Theorem 9.4.1]{Gir90} states that the converse of
Theorem \ref{thm:wigner_bs} holds.

Rather surprisingly, we have the following:
\begin{lemma}[a.s.\ convergence] \label{lem:as_reduct}
Let $(W_n)_{n\in\N}$ be a Hermitian Wigner ensemble.
Then
\[ \E\mu_{W_n}\tto\musc \quad \text{if and only if} \quad
\mu_{W_n}\tto\musc \text{ a.s.} \]
\end{lemma}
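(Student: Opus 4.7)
Plan of proof. The easy direction, $\mu_{W_n}\tto\musc$ a.s.\ implies $\E\mu_{W_n}\tto\musc$, follows from bounded convergence: for each bounded continuous $f$ on $\R$ we have $\int f\,d\mu_{W_n}\to\int f\,d\musc$ a.s.\ and $\bigl|\int f\,d\mu_{W_n}\bigr|\le\|f\|_\infty$, so $\int f\,d(\E\mu_{W_n})=\E\int f\,d\mu_{W_n}\to\int f\,d\musc$.

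The converse requires a model-free concentration of $\mu_{W_n}$ around $\E\mu_{W_n}$ valid with \emph{no} moment assumption on the entries. I would prove this at the CDF level. Fix $r\in\R$, write $F_n(r):=\mu_{W_n}((-\infty,r])$, and let $\cF_k$ be the $\sigma$-algebra generated by the entries in the first $k$ rows and columns of $W_n$. Expand the centered CDF value as a telescoping sum of martingale differences,
\[ F_n(r)-\E F_n(r) = \sum_{k=1}^n \gamma_k, \qquad \gamma_k := \E[F_n(r)\mid\cF_k]-\E[F_n(r)\mid\cF_{k-1}]. \]
To control $\gamma_k$, introduce the deflated matrix $W_n^{(k)}$ obtained by zeroing out row $k$ and column $k$, and let $F_n^{(k)}(r)$ be its CDF value at $r$. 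Since $W_n^{(k)}$ is a function of entries outside row/column $k$, and those entries are independent of row/column $k$, the identity $\E[F_n^{(k)}(r)\mid\cF_k]=\E[F_n^{(k)}(r)\mid\cF_{k-1}]$ holds, so subtracting produces
\[ \gamma_k = \E[F_n(r)-F_n^{(k)}(r)\mid\cF_k] - \E[F_n(r)-F_n^{(k)}(r)\mid\cF_{k-1}]. \]
The classical rank inequality $\|F^A-F^B\|_\infty\le\rank(A-B)/n$ for Hermitian matrices, combined with $\rank(W_n-W_n^{(k)})\le 2$, then yields $|\gamma_k|\le 4/n$ almost surely.

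Azuma--Hoeffding now supplies the summable bound $\P(|F_n(r)-\E F_n(r)|>t)\le 2\exp(-nt^2/32)$ for every $t>0$, so Borel--Cantelli gives $F_n(r)-\E F_n(r)\to 0$ a.s. The hypothesis $\E\mu_{W_n}\tto\musc$ together with continuity of $F^{\musc}$ gives $\E F_n(r) = F^{\E\mu_{W_n}}(r)\to F^{\musc}(r)$, hence $F_n(r)\to F^{\musc}(r)$ a.s.\ for each fixed $r$. Intersecting these a.s.\ events over $r$ in a countable dense subset of $\R$ and invoking continuity of $F^{\musc}$ once more upgrades pointwise a.s.\ convergence of CDFs to $\mu_{W_n}\tto\musc$ a.s.

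I expect the main obstacle to be the bookkeeping of the martingale decomposition: one has to introduce the surrogate $F_n^{(k)}(r)$ so that the conditional-independence identity kills it in $\gamma_k$ and lets the rank inequality take hold. Once that cancellation is arranged, the remaining inputs—the rank inequality, Azuma--Hoeffding, Borel--Cantelli, and continuity of $F^{\musc}$—are all classical, and the argument uses no structure of the entry distributions beyond independence.
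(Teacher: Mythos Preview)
Your proof is correct and follows the same core mechanism as the paper: a concentration-of-measure bound for $\mu_{W_n}$ around $\E\mu_{W_n}$, then Borel--Cantelli, then a countable-dense-set argument. The paper invokes the concentration as a black box (Lemma~\ref{lem:concen_spec}, quoted from \cite{BCC11,GL09}) for functions of bounded variation and tests against the piecewise-linear tent functions $f_{p,q}$; you instead reprove that concentration inequality from scratch---the martingale decomposition with the surrogate $F_n^{(k)}$, the rank inequality, and Azuma--Hoeffding are exactly how the cited lemma is established---and apply it directly to CDF values. Your route is thus more self-contained (no external citation needed) at the cost of a worse constant ($nt^2/32$ versus $nt^2/2$), which is irrelevant here. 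The choice of CDF evaluations versus Lipschitz test functions is cosmetic once one has the concentration bound and the continuity of $F^{\musc}$.
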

A proof of this fact using a concentration-of-measure inequality
is given in Appendix \ref{app:exp_meas}.
Thanks to this equivalence, we will be able to go back and forth freely
between the two types of convergences throughout the paper.

Theorem \ref{thm:wigner_bs} suggests an extension of the semicircle law
to the case where the entries of $W_n$ have variances other than $1/n$.
Here is one possible approach to such an extension.
Assume that the underlying probability space is the product
\[ (\Omega_1 \times \Omega_2,\cF_1\times\cF_2,\P_1\times\P_2) \]
of two probability
spaces $(\Omega_1,\cF_1,\P_1)$ and $(\Omega_2,\cF_2,\P_2)$.
Then let $X_n = (x_{ij})_{i,j=1}^n$ and $Y_n = (y_{ij})_{i,j=1}^n$
be random real symmetric matrices defined on $\Omega_1$ and $\Omega_2$
having i.i.d.\ upper triangular entries.
If $x_{11}$ is standard normal and
\[ \P_2(y_{11} = 1) = \P_2(y_{11} = -1) = 1/2, \]
then it is not difficult to show that $(W_n)_{n\in\N}$ given by
\[ w_{ij}(\omega_1,\omega_2) := x_{ij}(\omega_1)y_{ij}(\omega_2)/\sqrt{n} \]
satisfies the conditions of Theorem~\ref{thm:wigner_bs}.

Since $\mu_{W_n(\omega_1,\omega_2)} \tto \musc$ for $\P$-a.e.\
$(\omega_1,\omega_2)$, Tonelli's theorem implies that for $\P_1$-a.e.\
$\omega_1 \in \Omega_1$, we have $\mu_{W_n(\omega_1,\cdot)} \tto \musc$
$\P_2$-a.s.
Note that the $(i,j)$-entry of the random matrix $W_n(\omega_1,\cdot)$
defined on $(\Omega_2,\cF_2,\cP_2)$ has variance $x_{ij}(\omega_1)^2/n,$
which can deviate by any amount from $1/n$.

A problem with this approach is that we do not know for which $\omega_1$
we have the a.s.\ convergence $\mu_{W_n(\omega_1,\cdot)} \tto \musc$,
even though we know this happens for almost all $\omega_1$.
For instance, the above discussion does not tell us whether
$\mu_{W_n} \tto \musc$ a.s.\ is true when $(W_n)_{n\in\N}$ is a symmetric
Wigner ensemble such that
\[ \P(\w = \sqrt{2/n}) = \P(\w = -\sqrt{2/n}) = 1/2
\qquad \text{if $i+j$ is even} \]
and $\w = 0$ if $i+j$ is odd.

G\"otze, Naumov, and Tikhomirov \cite{GNT15} covered this case
by proving the following:

\begin{theorem}[semicircle law, {\cite[Corollary 1]{GNT15}}]
\label{thm:wigner_gnt}
Let $(W_n)_{n\in\N}$ be a symmetric Wigner ensemble such that
$\E\w = 0$ and $\E|\w|^2 < \infty$ for $i,j=1,\ldots,n$.
If the Lindeberg condition \eqref{eq:lind} holds, and
\begin{equation} \label{eq:row_one}
\frac{1}{n} \sum_{i=1}^n \Bigl| \sum_{j=1}^n \E|\w|^2 - 1 \Bigr| \to 0,
\end{equation}
and
\begin{equation} \label{eq:row_bdd}
\sum_{j=1}^n \E|\w|^2 \le C \qquad \text{for some $C<\infty$
independent of $n$ and $i$,}
\end{equation}
then $\mu_{W_n} \tto \musc$ a.s.
\end{theorem}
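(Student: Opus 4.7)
My plan is to use the combinatorial moment method after a truncation step. By Lemma~\ref{lem:as_reduct}, it suffices to show $\E\mu_{W_n}\tto\musc$; since $\musc$ is compactly supported, this reduces to showing convergence of each moment $m_k(n) := \frac{1}{n}\E\tr W_n^k$ to $\int x^k\,\musc(dx)$, equal to the Catalan number $C_{k/2}$ for even $k$ and to $0$ for odd $k$.

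First I would truncate. Using the Lindeberg condition \eqref{eq:lind}, choose $\eps_n\to 0$ slowly enough that the recentered truncations $\tilde w_{ij} := w_{ij}\I[|w_{ij}|\le\eps_n] - \E[w_{ij}\I[|w_{ij}|\le\eps_n]]$ satisfy $\frac{1}{n}\sum_{i,j}\E|w_{ij}-\tilde w_{ij}|^2 \to 0$. A Hoffman--Wielandt or rank-inequality argument then shows that $\mu_{W_n}$ and $\mu_{\tilde W_n}$ share the same weak limit, so I may henceforth assume the entries are bounded by $2\eps_n$, have mean zero, and satisfy \eqref{eq:row_one} and \eqref{eq:row_bdd} (the truncation changes variances by a negligible amount).

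Second, I expand
\[
m_k(n) = \frac{1}{n}\sum_{i_1,\ldots,i_k=1}^n \E[w_{i_1i_2}w_{i_2i_3}\cdots w_{i_ki_1}]
\]
and classify $k$-tuples by the closed walk $i_1\to i_2\to\cdots\to i_k\to i_1$ they trace. By independence and the mean-zero hypothesis, only walks in which every distinct edge is used at least twice contribute. For even $k$, the main contributions come from \emph{tree walks}, in which $k/2$ distinct edges each appear exactly twice and form a tree on $k/2+1$ vertices; there are $C_{k/2}$ such shapes, and for a fixed shape $T$ the contribution is asymptotically
\[
\frac{1}{n}\sum_{(i_v)_{v\in V(T)}}\prod_{\{u,v\}\in E(T)}\E|w_{i_u i_v}|^2.
\]
I would evaluate this by peeling leaves: for a leaf $v$ with parent $u$, the inner sum $\sum_{i_v}\E|w_{i_u i_v}|^2$ is the row sum $R_{i_u}$ at row $i_u$, uniformly bounded by \eqref{eq:row_bdd} and close to $1$ on $L^1$ average by \eqref{eq:row_one}. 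Iterating leaf elimination yields $n(1+o(1))$ for the full sum, hence $C_{k/2}+o(1)$ in total. Degenerate walks (edges of multiplicity $\ge 4$, or two edges forming a cycle) contribute $o(1)$: cycles lose a free summation index, and higher-multiplicity edges gain an extra factor of $O(\eps_n)$ from the uniform bound on the entries.

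I expect the main obstacle to be the iterated leaf elimination in the tree-walk analysis, where one must control the propagation of the ``$1+o(1)$'' error through the tree in the presence of highly nonuniform variances. This is precisely the place where both \eqref{eq:row_one} and \eqref{eq:row_bdd} are essential: \eqref{eq:row_one} identifies the correct limiting value $1$ for the row sums in the $L^1$ sense, while the uniform bound \eqref{eq:row_bdd} prevents a few atypical rows with very large $R_i$ from combining multiplicatively along the tree and producing a contribution of order $\gg 1$.
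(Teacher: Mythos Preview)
Your proposal is correct and matches the moment-method argument the paper develops in Sections~\ref{sec:comp_mom}--\ref{sec:suff} (truncation via Lemma~\ref{lem:neg_reduct}, canonical closed walks, the tree bound of Lemma~\ref{lem:tree_bdd} using \eqref{eq:row_bdd}, and the leaf-peeling Lemma~\ref{lem:tree_one} using \eqref{eq:row_one}). Note, however, that the paper does not prove Theorem~\ref{thm:wigner_gnt} itself---it is quoted from \cite{GNT15}, whose proof goes by a different route (first the Gaussian case, then Lindeberg universality); the paper's own moment-method machinery is deployed to prove the more general Theorem~\ref{thm:red_wigner}, of which Theorem~\ref{thm:wigner_gnt} is a special case.
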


From our main result (Theorem~\ref{thm:wigner_var}) it will follow that
\eqref{eq:row_bdd} is not needed in Theorem~\ref{thm:wigner_gnt},
and that $(W_n)_{n\in\N}$ can be assumed to be Hermitian,
not necessarily real symmetric.

To illustrate that \eqref{eq:row_one} is needed in Theorem \ref{thm:wigner_gnt},
the authors of \cite{GNT15} considered the random symmetric block matrix
\[ W_{n} = \begin{pmatrix}
A & B \\ B^T & D
\end{pmatrix} \]
where $A$ and $D$ are of size $\fl{n/2}\times\fl{n/2}$ and
$\ce{n/2}\times\ce{n/2}$, and the upper triangular entries
of $W_n$ are independent.
They let all entries of $W_n$ except the non-diagonal entries of $D$
be normal with mean $0$ and variance $1/n$, and simulated the spectrum
of $W_{n}$ for $n=2000$ to see that $\mu_{W_n}$ does not look
like a semicircle.
Note that \eqref{eq:row_one} does not hold.

Our main theorem will let us prove what was suggested by the simulation in
\cite{GNT15}, namely that $\E\mu_{W_n}\not\tto\musc$.
More generally, we will prove for a large class of Hermitian Wigner ensembles
$(W_n)_{n\in\N}$ that $\E\mu_{W_n}\tto\musc$ (or $\mu_{W_n}\tto\musc$ a.s.,
equivalently) holds \emph{if and only if} \eqref{eq:row_one} is true.

One thing we should notice is that changing $o(n)$ rows of $W_n$
has no effect on the limit of $\E\mu_{W_n}$ due to the following:

\begin{lemma}[rank inequality] \label{lem:rank_ineq}
Let $A$ and $B$ be $n \times n$ Hermitian matrices.
If $F_A$ and $F_B$ are the distribution functions of $\mu_A$ and $\mu_B$
(defined in the same way as $\mu_{W_n}$), then
\[ \sup_{x\in\R} \|F_A(x)-F_B(x)\| \le \frac{\rank(A-B)}{n}. \]
\end{lemma}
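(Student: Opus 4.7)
The plan is to use the Courant--Fischer min-max characterization of eigenvalues together with a dimension count on the kernel of the perturbation. Set $r := \rank(A-B)$ and let $K := \ker(A-B)$, so $\dim K \ge n-r$. Note that $F_A(x) - F_B(x)$ and $F_B(x) - F_A(x)$ play symmetric roles, so it suffices to show $F_A(x) - F_B(x) \le r/n$ for every fixed $x \in \R$.

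Fix $x \in \R$ and let $V_A$ be the span of the eigenvectors of $A$ corresponding to eigenvalues $\le x$, so that $\dim V_A = nF_A(x)$ and $\langle v, Av\rangle \le x\|v\|^2$ for all $v \in V_A$. The standard dimension formula gives
\[ \dim(V_A \cap K) \ge \dim V_A + \dim K - n \ge nF_A(x) - r. \]
For $v \in K$ we have $(A-B)v = 0$, hence $\langle v, Bv\rangle = \langle v, Av\rangle$; combined with the previous bound, this means $V_A \cap K$ is a subspace on which the quadratic form of $B$ is bounded above by $x\|v\|^2$.

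By the Courant--Fischer characterization $\lambda_{n-d+1}(B) = \min_{\dim W = d} \max_{0 \ne v \in W} \langle v, Bv\rangle/\|v\|^2$, the existence of a $d$-dimensional subspace on which $\langle v, Bv\rangle \le x\|v\|^2$ forces $\lambda_{n-d+1}(B) \le x$, i.e.\ at least $d$ eigenvalues of $B$ lie in $(-\infty, x]$. Applying this with $d = \dim(V_A \cap K)$ yields
\[ nF_B(x) \ge \dim(V_A \cap K) \ge nF_A(x) - r, \]
so $F_A(x) - F_B(x) \le r/n$, and the symmetric inequality is identical with $A$ and $B$ swapped.

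There is no serious obstacle here; the only place one must be careful is in lining up the directions of inequalities with Courant--Fischer (ensuring that an upper bound on the quadratic form on a $d$-dimensional subspace really produces $d$ small eigenvalues, rather than just a single one), and in verifying that $V_A \cap K$ carries $\langle v, Bv\rangle \le x\|v\|^2$ rather than merely $\langle v, Av\rangle \le x\|v\|^2$, which is why intersecting with $K$ is essential.
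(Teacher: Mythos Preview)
Your proof is correct. The paper itself does not prove this lemma at all; it simply cites \cite[Theorem A.43]{BS10}. Your argument via Courant--Fischer and the kernel of $A-B$ is one of the standard self-contained proofs of this inequality, so there is nothing to compare beyond noting that you have supplied what the paper only referenced.
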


\begin{proof}
See \cite[Theorem A.43]{BS10}.
\end{proof}

We want to say that for certain Hermitian Wigner ensembles
$(W_n)_{n\in\N}$ with $\E\mu_{W_n}\tto\musc$, we have \eqref{eq:row_one}.
However, without further restriction on $(W_n)_{n\in\N}$, we can always change
$o(n)$ rows and columns of $W_n$ so that \eqref{eq:row_one} becomes false,
while leaving the limiting distribution of $\E\mu_{W_n}$ unchanged.
To avoid this problem, we assume that
\begin{equation} \label{eq:margin}
\frac{1}{n}\sum_{i\in J_n}\sum_{j=1}^n\E|\w|^2 \to 0
\qquad \text{for any $J_n\subset\{1,\ldots,n\}$ with $|J_n|/n\to0$.}
\end{equation}
Notice that this condition is weaker than \eqref{eq:row_bdd}.
If $W_n$ satisfies \eqref{eq:row_one} and \eqref{eq:margin}, and we change $o(n)$
rows and columns of it to obtain Hermitian $W_n'$ which also satisfies
\eqref{eq:margin}, then $W_n'$ also satisfies \eqref{eq:row_one}.
The following is our first main theorem:

\begin{theorem}[characterization of semicircle convergence] \label{thm:wigner_var}
Let $(W_n)_{n\in\N}$ be a Hermitian Wigner ensemble with $\E\w=0$
and $\E|\w|^2 < \infty$ for $i,j=1,\ldots,n$ satisfying
\eqref{eq:margin} and the Lindeberg condition \eqref{eq:lind}.
Then $\E\mu_{W_n} \tto \musc$ if and only if \eqref{eq:row_one} holds.
%\begin{equation} \tag{\ref{eq:row_one}}
%\frac{1}{n} \sum_{i=1}^n \Bigl| \sum_{j=1}^n \E|\w|^2 - 1 \Bigr| \to 0.
%\end{equation}
\end{theorem}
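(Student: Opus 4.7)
The plan is to prove the two implications separately. The direction ``\eqref{eq:row_one} $\Rightarrow \E\mu_{W_n}\tto\musc$'' follows by reducing to Theorem \ref{thm:wigner_gnt}. Fix $M>1$, set $\sigma_i^2 := \sum_j \E|w_{ij}|^2$, and let $J_n := \{i : \sigma_i^2 > M\}$. Condition \eqref{eq:row_one} forces $|J_n|/n \to 0$ (else $\frac{1}{n}\sum_i|\sigma_i^2-1|$ would be bounded below by $(M-1)\liminf|J_n|/n > 0$), and then \eqref{eq:margin} gives $\frac{1}{n}\sum_{i\in J_n}\sigma_i^2 \to 0$. Let $W_n'$ be obtained from $W_n$ by zeroing out the rows and columns in $J_n$. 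Then $\rank(W_n-W_n') \le 2|J_n| = o(n)$, so Lemma \ref{lem:rank_ineq} implies $\E\mu_{W_n}$ and $\E\mu_{W_n'}$ share the same limit. Moreover $W_n'$ inherits the Lindeberg condition and \eqref{eq:row_one}, and now satisfies \eqref{eq:row_bdd} with constant $M$; the Hermitian analogue of Theorem \ref{thm:wigner_gnt} (provable by the same argument as the symmetric case) gives $\E\mu_{W_n'}\tto\musc$.

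For the converse, suppose $\E\mu_{W_n}\tto\musc$ together with Lindeberg and \eqref{eq:margin}. The key fact is that the second and fourth moments of $\musc$ are $1$ and $2$, respectively. Introduce the centred truncation $\hat w_{ij} := w_{ij}\I(|w_{ij}|\le\delta) - \E[w_{ij}\I(|w_{ij}|\le\delta)]$; by the Lindeberg condition and the Hoffman--Wielandt inequality, $\E\mu_{\hat W_n}\tto\musc$, and $|\hat w_{ij}| \le 2\delta$. The bounded-entry property combined with \eqref{eq:margin} should yield uniform integrability of $x^2$ and $x^4$ against $\{\mu_{\hat W_n}\}$, giving moment convergence:
\[ \frac{1}{n}\E\tr\hat W_n^2 \to 1, \qquad \frac{1}{n}\E\tr\hat W_n^4 \to 2. \]
The first limit is $\frac{1}{n}\sum_{i,j}\E|\hat w_{ij}|^2 \to 1$, which via Lindeberg passes to $\frac{1}{n}\sum_i \sigma_i^2 \to 1$.

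For the fourth moment, I would expand $\E\tr\hat W_n^4$ as a sum over closed walks of length $4$. Nonvanishing terms require each edge to appear at least twice, and the leading contribution comes from the two non-crossing pair partitions of $\{1,2,3,4\}$. A direct calculation shows each contributes $\frac{1}{n}\sum_i(\hat\sigma_i^2)^2 + O(\delta^2)$ with $\hat\sigma_i^2 := \sum_j \E|\hat w_{ij}|^2$: the partition $\{(1,2),(3,4)\}$ forces $i_1 = i_3$ and sums to $\sum_{i_1}(\sum_{i_2}\E|\hat w_{i_1 i_2}|^2)^2$, while $\{(1,4),(2,3)\}$ is symmetric in $i_2,i_4$. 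The crossing partition and all degenerate walks with coinciding vertices contain a factor $\E|\hat w_{ij}|^4 \le 4\delta^2 \E|\hat w_{ij}|^2$ and contribute $O(\delta^2)$. Letting $\delta = \delta_n \downarrow 0$ at a suitable rate, we obtain $\frac{2}{n}\sum_i(\sigma_i^2)^2 \to 2$; Cauchy--Schwarz then yields
\[ \frac{1}{n}\sum_i|\sigma_i^2-1| \le \Big(\frac{1}{n}\sum_i(\sigma_i^2-1)^2\Big)^{1/2} = \Big(\frac{1}{n}\sum_i(\sigma_i^2)^2 - \frac{2}{n}\sum_i\sigma_i^2 + 1\Big)^{1/2} \to 0, \]
which is \eqref{eq:row_one}.

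The main technical obstacle is justifying the uniform integrability that underlies moment convergence for $\hat W_n$. Since $\musc$ is compactly supported, weak convergence $\mu_{\hat W_n}\tto\musc$ does not by itself control the second or fourth moment of $\mu_{\hat W_n}$: an $o(1)$ fraction of outlier eigenvalues of size $n^{c}$ would leave the distributional limit intact while spoiling either moment. One must leverage \eqref{eq:margin} together with the bounded-entry property to rule out such outliers --- for instance, by a direct bound on $\frac{1}{n}\E\tr\hat W_n^{2k}$ for $k=1,2$ using the combinatorial expansion above, which is itself delicate because the leading term for $k=2$ is precisely $\frac{1}{n}\sum_i(\hat\sigma_i^2)^2$. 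Calibrating $\delta_n\downarrow 0$ so that this upper bound and the $O(\delta_n^2)$ remainder terms remain negligible simultaneously is the crux of the argument.
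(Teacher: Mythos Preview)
Your overall strategy matches the paper's. For sufficiency, your zeroing-out of rows with $\sigma_i^2>M$ is exactly the paper's Lemma~\ref{lem:bdd_reduct}; the paper then supplies its own moment-method proof (Section~\ref{sec:suff}) rather than invoking a Hermitian version of Theorem~\ref{thm:wigner_gnt}, but the reduction is the same. For necessity, the core computation---second and fourth moments of $\E\mu_{\hat W_n}$ followed by
\[
\frac{1}{n}\sum_i(\sigma_i^2-1)^2 = \frac{1}{n}\sum_i(\sigma_i^2)^2 - \frac{2}{n}\sum_i\sigma_i^2 + 1 \to 1-2+1=0
\]
---is precisely the paper's Section~\ref{sec:nec_six}.

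The gap you flag is real, and your proposed patch does not close it. A direct bound on $\frac{1}{n}\E\tr\hat W_n^4$ via the walk expansion is circular: its leading term is $\frac{2}{n}\sum_i(\hat\sigma_i^2)^2$, which is exactly the quantity you are trying to control, and no choice of $\delta_n\downarrow 0$ helps since under Lindeberg the row sums $\hat\sigma_i^2$ are essentially $\sigma_i^2$ regardless of $\delta_n$. The paper resolves this differently (Section~\ref{sec:lift_six}). First, \eqref{eq:margin} is used to extract, for each $\eps>0$, a set $K_n$ with $|K_n|\ge(1-\eps)n$ on which the row sums are uniformly bounded (Lemma~\ref{lem:almost_row_bdd}). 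On the submatrix $W_n^K$ the walk expansion \emph{does} give a uniform bound on the eighth moment (Lemma~\ref{lem:bdd_row_moment}), hence tightness. The subtle point is that $\E\mu_{W_n^K}$ is not known to converge to $\musc$, so one cannot yet invoke moment convergence; the paper closes this with Cauchy interlacing: if along a subsequence $\int x^6\,\E\mu_{W_n^K}$ exceeded $3^6$, a subsequential weak limit of $\E\mu_{W_n^K}$ would place mass outside $[-3,3]$, and interlacing would force $\E\mu_{W_n}$ to do the same, contradicting $\E\mu_{W_n}\tto\musc$. This yields a bounded sixth moment for $W_n^K$, the Section~\ref{sec:nec_six} argument applies to $W_n^K$, and \eqref{eq:margin} with $|K_n^c|/n\to 0$ transfers \eqref{eq:row_one} back to $W_n$.
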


\begin{remark} \label{rem:wigner_var}
Since \eqref{eq:row_one} implies
\[
\frac{1}{n}\sum_{i\in J_n}\sum_{j=1}^n\E|\w|^2
\le \frac{1}{n}\sum_{i\in J_n}\biggl(\Bigl|\sum_{j=1}^n\E|\w|^2 - 1\Bigr|
+ 1\biggr) \to 0
\]
for any $J_n \subset \{1,\ldots,n\}$ with $|J_n|/n\to0$,
the sufficiency direction of Theorem~\ref{thm:wigner_var} does not
require \eqref{eq:margin}.
This proves the claim right after Theorem~\ref{thm:wigner_gnt}.

By Lindeberg's universality scheme \cite[Theorem 2]{GNT15} for random matrices,
it follows that we can remove the condition \eqref{eq:row_bdd}
(which is (5) in \cite{GNT15}) from the semicircle law \cite[Theorem 1]{GNT15}
for certain random symmetric matrices with dependent upper triangular entries.
\end{remark}

We can actually go beyond Theorem~\ref{thm:wigner_var} and allow
the entries of $W_n$ to have infinite variances,
for example when $w_{ij} = c_{ij}x_{ij}/\sqrt{n\log n}$ where
$x_{ij}$ has a density
\[
f(x) = \begin{cases}
1/|x|^3 & \text{if $|x| > 1$} \\
0 & \text{if $|x| \le 1$}
\end{cases}
\]
and $c_{ij}$ is a real number close to $1$.
To achieve this, instead of $\E\w=0$ and the Lindeberg condition \eqref{eq:lind}, we assume
\begin{equation} \label{eq:weak_zero}
\frac{1}{n} \sum_{i,j=1}^n \bigl(\E[\w;|\w|\le1]\bigr)^2 \to 0
\end{equation}
and
\begin{equation} \label{eq:weak_lind}
\frac{1}{n} \sum_{i,j=1}^n \P(|\w| > \eps) \to 0
\qquad \text{for all $\eps > 0$.}
\end{equation}
If $\E\w=0$ and \eqref{eq:lind} hold, then \eqref{eq:weak_zero} follows due to
\[ \bigl(\E[\w;|\w|\le1]\bigr)^2 = \bigl(\E[\w;|\w|>1]\bigr)^2
\le \E[|\w|^2;|\w|>1], \]
and \eqref{eq:weak_lind} follows by
\[ \P(|\w|>\eps) \le \eps^{-2}\E[|\w|^2;|\w|>\eps]. \]
Finally, \eqref{eq:margin} is replaced by
\begin{multline} \label{eq:weak_margin}
\frac{1}{n}\sum_{i\in J_n}\sum_{j=1}^n\E[|\w|^2;|\w|\le1] \to 0 \\
\text{for any $J_n\subset\{1,\ldots,n\}$ with $|J_n|/n\to0$.}
\end{multline}

The following is our second main theorem:

\begin{theorem}[characterization, general version] \label{thm:wigner}
Let $(W_n)_{n\in\N}$ be a Hermitian Wigner ensemble satisfying
\eqref{eq:weak_zero}, \eqref{eq:weak_lind}, and \eqref{eq:weak_margin}.
Then $\E\mu_{W_n} \tto \musc$ if and only if
\begin{equation} \label{eq:weak_row_one}
\frac{1}{n} \sum_{i=1}^n \Bigl|\sum_{j=1}^n
\E[|\w|^2;|\w|\le1]-1 \Bigr| \to 0.
\end{equation}
\end{theorem}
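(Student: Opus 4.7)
The plan is to reduce Theorem~\ref{thm:wigner} to Theorem~\ref{thm:wigner_var} by truncating and centering each entry. Set $\hat w_{ij}:=\w\I_{|\w|\le 1}$ and $\tilde w_{ij}:=\hat w_{ij}-\E\hat w_{ij}$, and let $\hat W_n=(\hat w_{ij})$ and $\tilde W_n=(\tilde w_{ij})$; both are Hermitian Wigner ensembles, since the truncation acts coordinatewise on the independent upper-triangular entries. I will show that (i) $\E\mu_{W_n}$, $\E\mu_{\hat W_n}$, and $\E\mu_{\tilde W_n}$ share the same weak limits; (ii) $(\tilde W_n)_{n\in\N}$ satisfies the hypotheses of Theorem~\ref{thm:wigner_var}; and (iii) condition \eqref{eq:row_one} for $(\tilde W_n)$ is equivalent to \eqref{eq:weak_row_one} for $(W_n)$.

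For (i), the passage from $W_n$ to $\hat W_n$ uses Lemma~\ref{lem:rank_ineq}: the rank of $W_n-\hat W_n$ is at most the number of row indices $i$ for which some $|\w|>1$, a count whose expectation is $\le\sum_{i,j}\P(|\w|>1)=o(n)$ by \eqref{eq:weak_lind}. Hence $\rank(W_n-\hat W_n)/n\to 0$ in probability, and bounded convergence gives $\sup_x|F_{\E\mu_{W_n}}(x)-F_{\E\mu_{\hat W_n}}(x)|\to 0$. The passage from $\hat W_n$ to $\tilde W_n$ uses the Hoffman--Wielandt inequality: since $\hat W_n-\tilde W_n=\E\hat W_n$ is deterministic,
\[
\sum_{k=1}^n\bigl(\lambda_k(\hat W_n)-\lambda_k(\tilde W_n)\bigr)^2 \le \|\E\hat W_n\|_F^2 = \sum_{i,j}|\E\hat w_{ij}|^2 = o(n)
\]
by \eqref{eq:weak_zero}, and Cauchy--Schwarz converts this into a deterministic $o(1)$ bound on the Wasserstein-1 distance between $\mu_{\hat W_n}$ and $\mu_{\tilde W_n}$, which transfers to their expectations.

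For (ii), the entries $\tilde w_{ij}$ are mean zero, independent in the upper triangle, and bounded by $2$ in absolute value. The bound $\E|\tilde w_{ij}|^2\le\E[|\w|^2;|\w|\le 1]$ combined with \eqref{eq:weak_margin} gives \eqref{eq:margin} for $\tilde W_n$. To verify the Lindeberg condition \eqref{eq:lind}, observe that $|\tilde w_{ij}|>\eps$ forces either $|\hat w_{ij}|>\eps/2$ or $|\E\hat w_{ij}|>\eps/2$; using $|\tilde w_{ij}|^2\le 4$ and Markov's inequality to count the pairs in the second case yields
\[
\frac{1}{n}\sum_{i,j}\E[|\tilde w_{ij}|^2;|\tilde w_{ij}|>\eps]
\le \frac{4}{n}\sum_{i,j}\P(|\w|>\eps/2) + \frac{16}{\eps^2 n}\sum_{i,j}|\E\hat w_{ij}|^2,
\]
and both terms vanish by \eqref{eq:weak_lind} and \eqref{eq:weak_zero}. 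For (iii), write $\E|\tilde w_{ij}|^2=\E[|\w|^2;|\w|\le 1]-|\E[\w;|\w|\le 1]|^2$ and apply the triangle inequality to the sum in \eqref{eq:row_one} for $\tilde W_n$; its discrepancy with the sum in \eqref{eq:weak_row_one} is at most $\frac{1}{n}\sum_{i,j}|\E[\w;|\w|\le 1]|^2=o(1)$ by \eqref{eq:weak_zero}.

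The main obstacle I anticipate is the Lindeberg verification in step (ii): one must separate the genuine tails of $\w$ from the artifact of subtracting the truncated mean $\E\hat w_{ij}$, which can push $|\tilde w_{ij}|$ above $\eps$ even when $|\hat w_{ij}|$ is small. Once this case split is in hand, each remaining reduction is a direct application of a tool already available, and the biconditional in Theorem~\ref{thm:wigner} follows by chaining (i), (ii), (iii) through Theorem~\ref{thm:wigner_var}.
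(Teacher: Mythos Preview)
Your truncation-and-centering reduction is technically sound: steps (i)--(iii) are correct, and your Lindeberg verification via the case split on $|\hat w_{ij}|$ versus $|\E\hat w_{ij}|$ works. The problem is the target of the reduction. You reduce Theorem~\ref{thm:wigner} to Theorem~\ref{thm:wigner_var}, but in this paper Theorem~\ref{thm:wigner_var} is itself derived \emph{from} Theorem~\ref{thm:wigner} (Lemma~\ref{lem:var_reduct}; see also the remark following Theorem~\ref{thm:wigner}). Your argument therefore establishes only that the two theorems are equivalent, not that either one holds; within the paper's logical structure it is circular.

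The paper avoids this by reducing Theorem~\ref{thm:wigner} to Theorem~\ref{thm:red_wigner} instead (Lemmas~\ref{lem:neg_reduct} and~\ref{lem:reduct}), which is the result actually proved from scratch in Sections~\ref{sec:nec_six}--\ref{sec:suff}. The mechanics are almost identical to yours---truncate, center, check that the hypotheses transfer---except that the paper truncates at a \emph{vanishing} level $\eta_n\to 0$ (chosen, via \eqref{eq:weak_lind}, so that $\frac{1}{n}\sum_{i,j}\P(|\w|>\eta_n)\to 0$) rather than at the fixed level $1$. That choice yields entries bounded by $2\eta_n\to 0$, matching hypothesis \eqref{eq:neg} of Theorem~\ref{thm:red_wigner}; a fixed truncation at $1$ gives entries bounded only by $2$, which is why you were forced to land on Theorem~\ref{thm:wigner_var}. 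Replacing your level $1$ by such an $\eta_n$ converts your argument into the paper's reduction essentially verbatim.
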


\begin{remark}
\begin{enumerate}
\item
As in Theorem~\ref{thm:wigner_var}, the sufficiency direction of
Theorem~\ref{thm:wigner} does not require \ref{eq:weak_margin}.
(See Remark \ref{rem:wigner_var}.)

\item
Theorem~\ref{thm:wigner_var} follows easily from Theorem~\ref{thm:wigner}.
See Lemma \ref{lem:var_reduct} in the appendix.

\item
In case the entries of $W_n$ are real, the sufficiency part
of Theorem~\ref{thm:wigner} can be easily proved
using Theorem~\ref{thm:wigner_gnt} and Lemma~\ref{lem:rank_ineq}.
This is covered in Section~\ref{sec:suff_sym}.

\item
Our full proof of the sufficiency is a careful consideration of
Wigner's moment method proof of the original semicircle law.
This is arguably more elementary than the proof of
Theorem~\ref{thm:wigner_gnt} in \cite{GNT15}, which first deals with
matrices with Gaussian entries using combinatorial arguments,
and then generalizes the result to symmetric Wigner ensembles using Lindeberg's universality scheme for random matrices.
\end{enumerate}
\end{remark}

The following corollary relates
$\E\mu_{W_n}\tto\musc$ to the convergence in distribution of the sum of a row
of $W_n$ to the standard normal random variable.
The sufficiency direction when the entries of $W_n$ are identically distributed
was covered by \cite{Jun18}. 
We denote the L\'evy metric by $L$.

\begin{corollary}[characterization, Gaussian convergence] \label{cor:wigner_gauss}
Under the hypotheses of Theorem~\ref{thm:wigner},
we have $\E\mu_{W_n}\tto\musc$ if and only if
\begin{equation} \label{eq:row_gauss}
\frac{1}{n} \sum_{i=1}^n L(F_{ni},G) \to 0,
\end{equation}
where $F_{ni}$ and $G$ are the distribution functions of $\sum_{j=1}^n \pm|\w|$
and the standard normal random variable.
The signs $\pm$ are independent Rademacher random variables independent
from $W_n$.
\end{corollary}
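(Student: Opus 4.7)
The plan is to use Theorem~\ref{thm:wigner} to reduce the corollary to showing that \eqref{eq:weak_row_one} is equivalent to \eqref{eq:row_gauss} under the stated hypotheses. Write $\sigma_{ni}^2 := \sum_j \E[|w_{ij}|^2;|w_{ij}|\le 1]$ and, with $r_{ij}$ the independent Rademacher signs of the corollary, set $\tilde S_{ni} := \sum_j r_{ij}|w_{ij}|\I_{\{|w_{ij}|\le 1\}}$ with CDF $\tilde F_{ni}$. The obvious coupling $S_{ni}=\tilde S_{ni}$ off $\bigcup_j \{|w_{ij}|>1\}$ gives $L(F_{ni},\tilde F_{ni})\le \sum_j\P(|w_{ij}|>1)$, whose $\frac{1}{n}\sum_i$ vanishes by \eqref{eq:weak_lind}. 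So we may replace $F_{ni}$ by $\tilde F_{ni}$, which is the CDF of a sum of independent bounded mean-zero random variables with total variance $\sigma_{ni}^2$.

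The core estimate is a quantitative Lindeberg-type inequality for $L(\tilde F_{ni},\Phi_{\sigma_{ni}^2})$, where $\Phi_v$ denotes the $N(0,v)$ CDF. For $\eta\in(0,1)$, I truncate further: $\tilde S_{ni}^\eta := \sum_j r_{ij}|w_{ij}|\I_{\{|w_{ij}|\le \eta\}}$, with summands bounded by $\eta$ and total variance $\sigma_{ni,\eta}^2$. Coupling gives $L(\tilde F_{ni},\tilde F_{ni}^\eta)\le \ell_{ni}(\eta):=\sum_j\P(|w_{ij}|>\eta)$; Berry--Esseen for bounded mean-zero summands yields $L(\tilde F_{ni}^\eta,\Phi_{\sigma_{ni,\eta}^2})\le C\eta/\sigma_{ni,\eta}$ when $\sigma_{ni,\eta}>0$; and $|\sigma_{ni}^2-\sigma_{ni,\eta}^2|\le\ell_{ni}(\eta)$ (since $|w_{ij}|^2\le 1$ on the truncation) gives $L(\Phi_{\sigma_{ni,\eta}^2},\Phi_{\sigma_{ni}^2})=o(1)$ as $\ell_{ni}(\eta)\to 0$, uniformly for $\sigma_{ni}^2$ in a compact subset of $(0,\infty)$. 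Combined with the elementary comparison $L(\Phi_{\sigma^2},G)\asymp\min(|\sigma^2-1|,1)$ valid on all of $[0,\infty)$, and the fact that $\frac{1}{n}\sum_i\ell_{ni}(\eta)\to 0$ by \eqref{eq:weak_lind}, these ingredients sandwich $L(\tilde F_{ni},G)$ around a bounded truncation of $|\sigma_{ni}^2-1|$.

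For the forward direction, \eqref{eq:weak_row_one} gives $\sigma_{ni}^2\to 1$ in $L^1$ under the empirical measure $\frac{1}{n}\sum_i\delta_i$, so all but a vanishing fraction of rows have $\sigma_{ni}^2\in[\frac{1}{2},2]$; on those the sandwich above bounds $L(\tilde F_{ni},G)$ by $C\eta+o(1)$, bad rows contribute $o(1)$ because $L\le 1$, and sending $\eta\downarrow 0$ finishes. For the reverse direction, \eqref{eq:row_gauss} and the triangle inequality yield $\frac{1}{n}\sum_i L(\Phi_{\sigma_{ni}^2},G)\to 0$, hence $\sigma_{ni}^2\to 1$ in probability under $\frac{1}{n}\sum_i\delta_i$ by the comparison; since \eqref{eq:weak_margin} is precisely the uniform integrability of $(\sigma_{ni}^2)_i$, this lifts to $L^1$ convergence, i.e., \eqref{eq:weak_row_one}. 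The main technical obstacle is handling rows with $\sigma_{ni,\eta}$ very small, where the Berry--Esseen bound is vacuous: there one uses Chebyshev to show $L(\tilde F_{ni},\delta_0)\to 0$, so $L(\tilde F_{ni},G)$ is close to $L(\delta_0,G)>0$ and such rows must form a vanishing fraction if \eqref{eq:row_gauss} holds; rows with very large $\sigma_{ni}^2$ are rare by \eqref{eq:weak_margin}.
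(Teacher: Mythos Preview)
Your approach is correct and genuinely different from the paper's. Both arguments first reduce, via Theorem~\ref{thm:wigner}, to the equivalence \eqref{eq:weak_row_one}$\iff$\eqref{eq:row_gauss} and then replace $F_{ni}$ by the truncated $\tilde F_{ni}$ using \eqref{eq:weak_lind}. From there the two proofs diverge. The paper does not use Berry--Esseen at all: it sets up two elementary lemmas (Lemma~\ref{lem:average}, which extracts from a vanishing average a set $K_n$ with $|K_n|/n\to1$ on which the terms vanish uniformly, and Lemma~\ref{lem:uniform}, which converts ``uniform over $K_n$'' into ``along every sequence $i_n\in K_n$''), and then applies the \emph{qualitative} Lindeberg--Feller theorem to the single sequence $\sum_j \pm|w_{i_nj}|\I\{|w_{i_nj}|\le1\}$. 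For the reverse direction the paper argues by contradiction on subsequential limits of $c_n:=\sigma_{ni_n}^2$ (if $c_n\to0$ the second moment contradicts \eqref{eq:trun_gauss}; if $c_n\to c\in(0,\infty]$ then Lindeberg--Feller forces $c=1$), and finally invokes \eqref{eq:weak_margin} to handle $K_n^c$.

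Your route is quantitative: you bound $L(\tilde F_{ni}^\eta,\Phi_{\sigma_{ni,\eta}^2})$ by a Berry--Esseen rate $C\eta/\sigma_{ni,\eta}$, compare $L(\Phi_{\sigma^2},G)$ to $\min(|\sigma^2-1|,1)$, and read off convergence of $\sigma_{ni}^2$ to $1$ in probability under the empirical measure, then upgrade to $L^1$ by identifying \eqref{eq:weak_margin} with uniform integrability of $(\sigma_{ni}^2)_i$. This is a valid and illuminating reformulation; note that the uniform-integrability reading also yields boundedness of $\frac1n\sum_i\sigma_{ni}^2$, which you need to control the large-$\sigma$ rows. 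The paper's argument is slightly more elementary in that it avoids any quantitative CLT, while yours is more direct and in principle gives rates. One small point to tighten in your sketch: the Chebyshev step for ``$\sigma_{ni,\eta}$ small'' really controls $\tilde F_{ni}^\eta$, not $\tilde F_{ni}$; you need to first discard the (rare) rows with $\ell_{ni}(\eta)$ large before concluding that small-$\sigma_{ni,\eta}$ rows have $\tilde F_{ni}$ close to $\delta_0$.
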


The rest of the paper is organized as follows.
Section \ref{sec:suff_sym} is a short section that introduces
Theorem~\ref{thm:red_wigner}.
This theorem is a reduction of Theorems~\ref{thm:wigner_var}
and~\ref{thm:wigner}, and will ultimately imply them as shown in
Appendix~\ref{app:reduct}.
This section also proves the sufficiency part of Theorem~\ref{thm:red_wigner}
in the case when the entries of $W_n$ are real.

Section \ref{sec:nec_six} is the essence of the proof of the necessity part
of Theorem~\ref{thm:red_wigner}.
We add an assumption that the sixth moment of $\E\mu_{W_n}$ is bounded,
but as a return we obtain a clean proof that is right to the point.
The idea is to express the second and the fourth moments of $\E\mu_{W_n}$
in terms of the variances of the entries of $W_n$.

Section~\ref{sec:lift_six} shows that we can remove the additional assumption
on the sixth moments, assuming that a certain lemma
(Lemma~\ref{lem:bdd_row_moment}) holds.
The condition \eqref{eq:margin} is used in this section.

Section~\ref{sec:comp_mom} proves Lemma~\ref{lem:bdd_row_moment}
by a systematic computation of the moments of $\E\mu_{W_n}$.
The computation is a variant of Wigner's original moment method, but it can
handle the case when the entries have non-identical variances.

In Section~\ref{sec:suff}, we prove the sufficiency part of
Theorem~\ref{thm:red_wigner} using the results of Section~\ref{sec:comp_mom}.
The classical argument involving Dyck paths is discussed for completeness.

Finally in Section \ref{sec:gauss}, we derive Corollary \ref{cor:wigner_gauss}
from Theorem \ref{thm:wigner} by an elementary argument involving
the Lindeberg--Feller central limit theorem.

Appendix \ref{app:exp_meas} defines the mean of a random probability measure,
and proves some of their properties that we need.
Appendix \ref{app:reduct} contains the fairly standard proof that
Theorem~\ref{thm:wigner} implies Theorem~\ref{thm:wigner_var}, and that
Theorem~\ref{thm:red_wigner} implies Theorem~\ref{thm:wigner}.

\section{Proof of sufficiency for symmetric Wigner ensembles}
\label{sec:suff_sym}

It is enough to prove the following in order to prove
Theorems \ref{thm:wigner_var} and \ref{thm:wigner}.
The justification for the reduction is fairly standard, and is covered
by Lemmas \ref{lem:var_reduct} and \ref{lem:reduct} in the appendix.

\begin{theorem}[characterization, reduced form] \label{thm:red_wigner}
Let $(W_n)_{n\in\N}$ be a Hermitian Wigner ensemble such that
\begin{multline} \label{eq:neg}
\E\w = 0 \quad\text{and}\quad |\w| \le \eps_n \qquad
\text{for all $n\in\N$ and $i,j=1,\ldots,n$} \\
\text{where}\quad
1 \ge \eps_1 \ge \eps_2 \ge \cdots \quad \text{and} \quad \eps_n \to 0,
\end{multline}
and \eqref{eq:margin} is true.
Then $\E\mu_{W_n} \tto \musc$ if and only if \eqref{eq:row_one}.
Note that \eqref{eq:lind}
is automatically satisfied due to $|\w| \le \eps_n$.
\end{theorem}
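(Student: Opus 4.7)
The plan is to attack both directions by the method of moments, made tractable by the truncation $|\w|\le\eps_n\to 0$ from \eqref{eq:neg}, which forces every moment of $\E\mu_{W_n}$ to be finite. Throughout, write $\sigma_{ij}^2:=\E|\w|^2$ and $s_i:=\sum_{j=1}^n\sigma_{ij}^2$, so that \eqref{eq:row_one} reads $\frac{1}{n}\sum_{i=1}^n|s_i-1|\to 0$, while \eqref{eq:margin} says $\frac{1}{n}\sum_{i\in J_n}s_i\to 0$ whenever $|J_n|/n\to 0$.

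For necessity, I would identify the first two even moments of $\E\mu_{W_n}$ with simple functionals of $(s_i)$. A direct trace expansion gives
\[
\int x^2\,d\E\mu_{W_n} \;=\; \frac{1}{n}\E\tr(W_n^2) \;=\; \frac{1}{n}\sum_{i=1}^n s_i,
\]
and expanding $\frac{1}{n}\E\tr(W_n^4)$ over closed $4$-walks and using $\E\w=0$ plus independence to kill walks with an unpaired edge, the two surviving tree-pairings each produce $\frac{1}{n}\sum_{i}s_i^2$, while the remaining pairing gives an error bounded by $\eps_n^2\cdot\frac{1}{n}\sum s_i$ via $\sigma_{ij}^4\le\eps_n^2\sigma_{ij}^2$. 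Provided $\E\mu_{W_n}\tto\musc$ can be upgraded to convergence of the second and fourth moments, this forces $\frac{1}{n}\sum s_i\to 1$ and $\frac{1}{n}\sum s_i^2\to 1$, whence
\[
\frac{1}{n}\sum_{i=1}^n(s_i-1)^2 \;=\; \frac{1}{n}\sum s_i^2 \;-\; 2\cdot\frac{1}{n}\sum s_i \;+\; 1 \;\to\; 0,
\]
and \eqref{eq:row_one} follows by Cauchy--Schwarz since $\frac{1}{n}\sum|s_i-1|\le(\frac{1}{n}\sum(s_i-1)^2)^{1/2}$.

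For sufficiency, assume \eqref{eq:row_one} and compute every even moment by expanding
\[
\frac{1}{n}\E\tr(W_n^{2k}) \;=\; \frac{1}{n}\sum_{i_0,\ldots,i_{2k-1}}\E\bigl[w_{i_0i_1}w_{i_1i_2}\cdots w_{i_{2k-1}i_0}\bigr]
\]
and classifying closed walks by shape. Mean-zero independence kills any walk with an unpaired edge, so non-vanishing shapes have at most $k$ distinct edges and $k+1$ distinct vertices. The extremal case is a Dyck walk on a rooted plane tree (exactly $k+1$ vertices, $k$ edges each used twice); for Hermitian entries the two uses of each edge always form a $w_{ij}\overline{w_{ij}}$ pair, so the contribution factors as $\prod_{\text{tree edges}}\sigma^2$, and summing over vertex assignments and collapsing leaf-by-leaf using $\sum_j\sigma_{vj}^2=s_v$ with \eqref{eq:row_one} to replace each $s_v$ by $1$, one recovers the Catalan number $C_k$, matching the $2k$-th moment of $\musc$. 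Every non-Dyck shape has some edge used $\ge 4$ times, supplying a factor $\eps_n^2$, hence total contribution $o(1)$ after the standard counting. Odd moments vanish because no closed walk of odd length pairs all its edges.

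The main obstacle is that both steps secretly assume uniform control of higher moments of $\E\mu_{W_n}$: in necessity, to upgrade weak convergence to convergence of the fourth moment; in sufficiency, to justify the leaf-collapse uniformly as $k$ grows. The plan of Sections~\ref{sec:nec_six} through~\ref{sec:comp_mom} is therefore first to carry out both directions under an added hypothesis that $\E\mu_{W_n}$ has uniformly bounded sixth moment, and then to remove that extra hypothesis by truncation: let $J_n$ be the small set of rows with anomalously large $s_i$, replace the rows and columns indexed by $J_n$ with zeros (by Lemma~\ref{lem:rank_ineq} this changes $\E\mu_{W_n}$ by $o(1)$ in L\'evy distance provided $|J_n|/n\to 0$), and use \eqref{eq:margin} to guarantee the truncated matrix still satisfies \eqref{eq:row_one} (or its conclusion) together with a sixth-moment bound. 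The genuinely delicate ingredient in this lift is Lemma~\ref{lem:bdd_row_moment}, which requires a careful Wigner-style moment estimate adapted to non-identical variances, and I expect this to be where most of the technical work lives.
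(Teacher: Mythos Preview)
Your overall architecture matches the paper's: necessity via the second and fourth trace moments, sufficiency via the full moment method and Dyck walks, and a row-truncation to lift the sixth-moment hypothesis. But there is a real gap in your lift for the necessity direction. You propose to remove a set $J_n$ with $|J_n|/n\to 0$ consisting of rows with large $s_i$, so that the remainder has uniformly bounded row sums and Lemma~\ref{lem:bdd_row_moment} then supplies the sixth-moment bound. The trouble is that \eqref{eq:margin} by itself does \emph{not} produce such a $J_n$: it only guarantees (Lemma~\ref{lem:almost_row_bdd}) that for each fixed $\eps>0$ one can remove at most $\eps n$ rows and bound the remaining row sums by some $C_\eps$, where $C_\eps$ may diverge as $\eps\to 0$. (Concretely: put $s_i=k$ on $n/2^k$ rows for $k=1,2,\ldots$; then \eqref{eq:margin} holds, yet $\{i:s_i>C\}$ has density $2^{-C}\not\to 0$ for every fixed $C$.) The eighth-moment bound coming out of Lemma~\ref{lem:bdd_row_moment} is then of order $C_\eps^4$, and you cannot diagonalize directly to $|J_n|/n\to 0$.

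The paper supplies the missing idea: the Cauchy interlacing law. For each fixed $\eps$ the truncated matrix $W_n^{K^\eps}$ has bounded eighth moment, hence tight ESD; if its sixth moment exceeded $3^6$ along a subsequence, a subsequential weak limit would put mass outside $[-3,3]$, and by interlacing so would $\E\mu_{W_n}$---contradicting the hypothesis $\E\mu_{W_n}\tto\musc$. This yields a sixth-moment bound \emph{independent of $\eps$}, after which one can diagonalize over $\eps$ to obtain $K_n$ with $|K_n|/n\to 1$ and \eqref{eq:trim_bdd_sixth}. Two minor corrections: the sufficiency lift is simpler than you suggest, since \eqref{eq:row_one} itself forces $|\{i:s_i>2\}|=o(n)$, so Lemma~\ref{lem:bdd_reduct} delivers \eqref{eq:row_bdd} directly with no sixth-moment detour; and your claim that every non-Dyck shape has an edge used at least four times is false (a triangle traversed twice uses each edge exactly twice), though the correct reason---$t<k/2+1$ vertices forces a factor $\eps_n^{\,k-2(t-1)}$ against a spanning-tree product bounded via Lemma~\ref{lem:tree_bdd}---still gives the $o(1)$ you need.
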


The following shows that we can assume \eqref{eq:row_bdd} in the proof of
the sufficiency part of Theorem \ref{thm:red_wigner}.

\begin{lemma} \label{lem:bdd_reduct}
Assume that $(W_n)_{n\in\N}$ satisfies \eqref{eq:row_one} and
the conditions of Theorem~\ref{thm:red_wigner}.
Then there exists a Hermitian Wigner ensemble
$(W'_n)_{n\in\N}$ satisfying \eqref{eq:row_one}, \eqref{eq:row_bdd}, and the
conditions of Theorem \ref{thm:red_wigner} such that $\mu_{W_n}\tto\musc$
a.s.\ if and only if $\mu_{W'_n}\tto\musc$ a.s.
\end{lemma}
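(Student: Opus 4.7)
The plan is to construct $W'_n$ by zeroing out the rows and columns indexed by the ``bad'' set
\[
J_n := \Bigl\{ i \in \{1,\ldots,n\} : \sum_{j=1}^n \E|\w|^2 > 2 \Bigr\}.
\]
The key observation is that condition \eqref{eq:row_one} forces $|J_n|/n \to 0$: indeed, every $i \in J_n$ contributes at least $1$ to $|\sum_j \E|\w|^2 - 1|$, so $|J_n|/n$ is bounded by the average in \eqref{eq:row_one}. Setting $W'_n$ equal to $W_n$ but with every entry in a row or column of $J_n$ replaced by $0$ produces a Hermitian Wigner ensemble (independence of upper triangular entries is preserved because deterministic zeroes are independent of everything), with $\E w'_{ij}=0$, $|w'_{ij}| \le \eps_n$, and $\sum_j \E|w'_{ij}|^2 \le 2$ for every $i$ — so \eqref{eq:neg} and \eqref{eq:row_bdd} hold automatically, and \eqref{eq:margin} is inherited because $\E|w'_{ij}|^2 \le \E|\w|^2$ entrywise.

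Next I would verify spectral equivalence. Since $W_n - W'_n$ is supported on rows and columns in $J_n$, its rank is at most $2|J_n| = o(n)$. Lemma \ref{lem:rank_ineq} then gives
\[
\sup_{x\in\R} |F_{W_n}(x) - F_{W'_n}(x)| \le \frac{2|J_n|}{n} \to 0,
\]
which, together with Lemma \ref{lem:as_reduct}, yields the claimed equivalence $\mu_{W_n}\tto\musc$ a.s.\ iff $\mu_{W'_n}\tto\musc$ a.s.

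The step requiring the most care is verifying that $(W'_n)$ still satisfies \eqref{eq:row_one}. Split the sum over $i$ into $i\in J_n$ and $i\notin J_n$. For $i\in J_n$ every term equals $1$, so the contribution to the average is $|J_n|/n = o(1)$. For $i\notin J_n$, writing $\sum_j \E|w'_{ij}|^2 = \sum_j \E|\w|^2 - \sum_{j\in J_n} \E|\w|^2$ and applying the triangle inequality gives
\[
\Bigl|\sum_j \E|w'_{ij}|^2 - 1\Bigr| \le \Bigl|\sum_j \E|\w|^2 - 1\Bigr| + \sum_{j\in J_n} \E|\w|^2.
\]
Summing over $i$ and dividing by $n$, the first piece tends to $0$ by \eqref{eq:row_one} for $W_n$; for the second, Hermiticity gives $\sum_i \sum_{j\in J_n}\E|\w|^2 = \sum_{i\in J_n}\sum_j \E|\w|^2$, which is $o(n)$ by applying \eqref{eq:margin} to the sequence $J_n$ (which is legitimate since $|J_n|/n\to 0$). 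The slight subtlety — and the place where \eqref{eq:margin} for $W_n$ is genuinely used — is precisely this transposition, after which everything assembles into \eqref{eq:row_one} for $W'_n$ and completes the proof.
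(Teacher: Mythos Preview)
Your proof is correct and follows essentially the same route as the paper: the same bad set $J_n$, the same zeroing construction, the same use of \eqref{eq:row_one} to get $|J_n|/n\to 0$, the rank inequality for spectral equivalence, and the same combination of \eqref{eq:row_one} with \eqref{eq:margin} (via the Hermitian symmetry $\sum_{i}\sum_{j\in J_n}\E|\w|^2=\sum_{i\in J_n}\sum_j\E|\w|^2$) to recover \eqref{eq:row_one} for $W'_n$. The only cosmetic differences are that the paper packages the estimate for \eqref{eq:row_one} into a single inequality rather than splitting $i\in J_n$ versus $i\notin J_n$, and that the appeal to Lemma~\ref{lem:as_reduct} is unnecessary here since the rank inequality already gives the a.s.\ equivalence directly.
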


\begin{proof}
If we let $J_n$ be the set of all $i \in \{1,\ldots,n\}$ such that
$\sum_{j=1}^n \E|\w|^2 > 2$, then
\[ \frac{|J_n|}{n} \le
\frac{1}{n} \sum_{i=1}^n \Bigl| \sum_{j=1}^n \E|\w|^2 - 1 \Bigr| \]
where the right side goes to $0$ by \eqref{eq:row_one}.
Let $W'_n = (\w')_{i,j=1}^n$ be given by $\w' = \w$ if $i,j\not\in J_n$
and $\w' = 0$ otherwise.

Then,
\[
\begin{split}
\frac{1}{n} \sum_{i=1}^n \Bigl| \sum_{j=1}^n \E|\w'|^2 - 1 \Bigr|
&\le \frac{1}{n} \sum_{i=1}^n \Bigl| \sum_{j=1}^n \E|\w|^2 - 1 \Bigr|
+ \frac{2}{n} \sum_{i\in J_n} \sum_{j=1}^n \E|\w|^2 \\
&\to 0
\end{split}
\]
where the right side goes to $0$ by \eqref{eq:row_one} and \eqref{eq:margin}.
Notice that $(W_n')_{n\in\N}$ satisfies \eqref{eq:row_bdd} for $C=2$.
The conditions of Theorem~\ref{thm:red_wigner} can be easily shown for
$(W_n')_{n\in\N}$.

Since
\[ \frac{\rank(W_n-W'_n)}{n} \le \frac{2|J_n|}{n} \to 0, \]
we have $\mu_{W_n}\tto\musc$ a.s.\ if and only if $\mu_{W'_n}\tto\musc$ a.s.\
by Lemma \ref{lem:rank_ineq}.
\end{proof}

If $W_n$ is real symmetric, then the sufficiency part of
Theorem~\ref{thm:red_wigner} follows from Lemma~\ref{lem:bdd_reduct} and
Theorem~\ref{thm:wigner_gnt}.
In Section \ref{sec:suff}, we will present a direct proof of
the sufficiency that applies to Hermitian Wigner ensembles and does not
depend on the result of \cite{GNT15}.

\section{Proof of necessity under bounded sixth moments} \label{sec:nec_six}

Assume \eqref{eq:neg} throughout this section.
In this section, we present a relatively simple proof of necessity
in Theorem~\ref{thm:red_wigner} under the following additional assumption:
\begin{equation} \label{eq:bdd_sixth}
\sup_{n\in\N} \int_\R x^6 \,\E\mu_{W_n} < \infty.
\end{equation}
The number $6$ comes out just because it is an even number greater than $4$.
Our proof is based on an examination of the second
and the fourth moments of $\E\mu_{W_n}$.
If $\lambda_1,\ldots,\lambda_n$ are the eigenvalues of $W_n$, then
for each $k \in \N$ we have
\[
\int_\R x^k \,\mu_{W_n}(dx) = \frac{1}{n}\sum_{i=1}^n \lambda_i^k
= \frac{1}{n}\tr W_n^k,
\]
and thus
\begin{equation} \label{eq:moment_trace}
\int_\R x^k \,\E\mu_{W_n}(dx) = \frac{1}{n}\E\tr W_n^k.
\end{equation}
(See Lemma \ref{lem:exp_change}.)

The second moment of $\E\mu_{W_n}$ can be easily expressed
in terms of the variances of $\w$.

\begin{lemma}[computation of the second moment] \label{lem:comp_two}
We have
\[ \int_\R x^2 \,\E\mu_{W_n}(dx) =
\frac{1}{n}\sum_{i,j=1}^n \E |w_{ij}|^2. \]
\end{lemma}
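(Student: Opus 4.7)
The plan is to apply the trace-moment identity \eqref{eq:moment_trace} with $k=2$ and then expand the trace of $W_n^2$ directly using the Hermitian structure.

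First, by \eqref{eq:moment_trace}, I would write
\[
\int_\R x^2 \,\E\mu_{W_n}(dx) = \frac{1}{n}\E\tr W_n^2.
\]
Next, I would expand the trace in terms of matrix entries: for any $n\times n$ matrix $A = (a_{ij})$,
\[
\tr A^2 = \sum_{i,j=1}^n a_{ij}a_{ji}.
\]
Applying this to $A = W_n$ and using the Hermitian condition $w_{ji} = \overline{w_{ij}}$ gives
\[
\tr W_n^2 = \sum_{i,j=1}^n w_{ij}\overline{w_{ij}} = \sum_{i,j=1}^n |w_{ij}|^2.
\]
Taking expectations and dividing by $n$ yields the claim.

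There is essentially no obstacle here: the identity is purely algebraic, with the one small subtlety being the use of $w_{ji} = \overline{w_{ij}}$ to convert the product $w_{ij}w_{ji}$ into $|w_{ij}|^2$. This is also the step where the Hermitian (rather than merely symmetric) hypothesis enters, and it is the reason the formula involves $\E|w_{ij}|^2$ rather than $\E w_{ij}^2$. Fubini/Tonelli for swapping the expectation with the finite sum is automatic since all terms are nonnegative.
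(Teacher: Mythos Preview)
Your proof is correct and follows exactly the paper's approach: apply \eqref{eq:moment_trace} with $k=2$ and use the identity $\tr W_n^2 = \sum_{i,j=1}^n |w_{ij}|^2$, which holds because $W_n$ is Hermitian. You have merely spelled out the Hermitian step $w_{ij}w_{ji} = |w_{ij}|^2$ in slightly more detail than the paper does.
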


\begin{proof}
It follows from \eqref{eq:moment_trace} and
\[ \tr W_n^2 = \sum_{i,j=1}^n |w_{ij}|^2. \qedhere \]
\end{proof}

Computing the fourth moment requires more effort, but is still tractable.

\begin{lemma}[computation of the fourth moment] \label{lem:comp_four}
If
\begin{equation} \label{eq:bdd_var}
\sup_{n\in\N} \int_\R x^2 \,\E\mu_{W_n} < \infty,
\end{equation}
then
\[ \int_\R x^4 \,\E\mu_{W_n}(dx) - \frac{2}{n}\sum_{i=1}^n
\biggl( \sum_{j=1}^n \E|\w|^2 \biggr)^2 \to 0. \]
\end{lemma}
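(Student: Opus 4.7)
The plan is to compute $\int_\R x^4 \,\E\mu_{W_n}(dx) = \frac{1}{n}\E\tr W_n^4$ directly via \eqref{eq:moment_trace}. First I would expand
\[ \tr W_n^4 = \sum_{i_1,i_2,i_3,i_4=1}^n w_{i_1 i_2}w_{i_2 i_3}w_{i_3 i_4}w_{i_4 i_1}, \]
take expectations term by term, and classify the quadruples $(i_1,i_2,i_3,i_4)$ that contribute. Because distinct upper-triangular entries are independent and $\E w_{ij}=0$ by \eqref{eq:neg}, while the Hermitian relation $w_{ji}=\overline{w_{ij}}$ identifies the four factors with the undirected edges of the walk $i_1\to i_2\to i_3\to i_4\to i_1$, the expectation vanishes unless every distinct edge of the multiset $\{\{i_1,i_2\},\{i_2,i_3\},\{i_3,i_4\},\{i_4,i_1\}\}$ appears with even multiplicity.

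A short inspection of the 4-cycle shows the contributing families are exactly (I) $i_3=i_1$, giving $\E[|w_{i_1 i_2}|^2|w_{i_1 i_4}|^2]$, and (II) $i_2=i_4$ with $i_3\neq i_1$, giving $\E[|w_{i_1 i_2}|^2|w_{i_2 i_3}|^2]$; every other configuration either meets the pairing requirement only after some factor $w_{ii}$ appears unmatched (and is killed by $\E w_{ii}=0$), or already lies in (I). Summing (I) over $i_1$ and (II) over $i_2$ (using $\E|w_{ij}|^2=\E|w_{ji}|^2$), and separating the coincidences $i_2=i_4$ in (I) and $i_1=i_3$ in (II) as explicit diagonal corrections, leads to
\[ \E\tr W_n^4 = 2\sum_{i=1}^n\Bigl(\sum_{j=1}^n\E|w_{ij}|^2\Bigr)^2 - 2\sum_{i,j=1}^n(\E|w_{ij}|^2)^2 + \sum_{i,j=1}^n\E|w_{ij}|^4. \]

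It then remains to show that the last two sums are $o(n)$. Since $|w_{ij}|\le\eps_n$ from \eqref{eq:neg}, both $(\E|w_{ij}|^2)^2$ and $\E|w_{ij}|^4$ are bounded by $\eps_n^2\,\E|w_{ij}|^2$, and Lemma~\ref{lem:comp_two} together with \eqref{eq:bdd_var} gives $\frac{1}{n}\sum_{i,j}\E|w_{ij}|^2=O(1)$; hence after dividing by $n$ both error terms are $O(\eps_n^2)\to 0$.

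The main obstacle is the case enumeration. One must carefully handle all degenerate quadruples in which several indices coincide or a self-loop $\{i,i\}$ appears, and confirm that the only ways to avoid an unmatched $\E w_{ii}=0$ factor are to land in family (I) or (II) (possibly with a diagonal correction). This bookkeeping is the combinatorial heart of Wigner's moment method; once it is set up correctly, the remaining estimates are routine from $|w_{ij}|\le\eps_n$ and the uniform bound on the second moment of $\E\mu_{W_n}$.
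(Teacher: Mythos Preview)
Your proposal is correct and follows essentially the same route as the paper. The paper likewise expands $\frac{1}{n}\E\tr W_n^4$, argues that the only surviving quadruples are those with $i_1=i_3$ or $i_2=i_4$, and obtains the identical formula
\[
\int_\R x^4 \,\E\mu_{W_n}(dx) = \frac{2}{n}\sum_{i=1}^n\Bigl(\sum_{j=1}^n\E|w_{ij}|^2\Bigr)^2 + \frac{1}{n}\sum_{i,j=1}^n\bigl(\E|w_{ij}|^4 - 2(\E|w_{ij}|^2)^2\bigr),
\]
after which the error term is bounded by $3\eps_n^2\cdot\frac{1}{n}\sum_{i,j}\E|w_{ij}|^2\to 0$ exactly as you do. The only cosmetic difference is that the paper uses inclusion--exclusion on the two events $\{i_1=i_3\}$ and $\{i_2=i_4\}$, whereas you split into the disjoint families (I) and (II); the resulting identities coincide.
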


We remark that \eqref{eq:margin} implies \eqref{eq:bdd_var}.
The proof for that is similar to that of Lemma \ref{lem:almost_row_bdd} below.

\begin{proof}
Note that
\[ \int_\R x^4 \,\E\mu_{W_n}(dx) = \frac{1}{n}\E\tr W_n^4
= \frac{1}{n} \sum_{i_1,i_2,i_3,i_4=1}^n
\E[w_{i_1i_2}w_{i_2i_3}w_{i_3i_4}w_{i_4i_1}]. \]
Since the upper triangular entries of $W_n$ are independent and
have mean zero, in order for $\E[w_{i_1i_2}w_{i_2i_3}w_{i_3i_4}w_{i_4i_1}]$
not to vanish,
\[ \{i_1,i_2\},\{i_2,i_3\},\{i_3,i_4\},\{i_4,i_1\} \]
should either be all the same, or be partitioned into two groups,
where each group consists of two identical sets.
This implies either $i_1=i_3$ or $i_2=i_4$ or both.
Notice that, for instance, $i_1=i_2 \ne i_3=i_4$ cannot happen because $\{i_1,i_2\}$ would then appear only once among $\{i_1,i_2\}$, $\{i_2,i_3\}$, $\{i_3,i_4\}$, and $\{i_4,i_1\}$.

Thus, the sum on the right side equals
\[ \sum_{i,j,k=1}^n \E[|w_{ij}|^2|w_{ik}|^2] +
\sum_{i,j,k=1}^n \E[|w_{ij}|^2|w_{jk}|^2]
- \sum_{i,j=1}^n \E|w_{ij}|^4, \]
where the last term corresponds to the case where both $i_1=i_3$
and $i_2=i_4$ are true.
Since the first and the second sum both equal
\[ \sum_{i=1}^n\biggl(\sum_{j=1}^n\E|w_{ij}|^2\biggr)^2
+ \sum_{i,j=1}^n \bigl(\E|w_{ij}|^4 - (\E|w_{ij}|^2)^2 \bigr), \]
we have
\[
\int_\R x^4 \,\E\mu_{W_n}(dx) =
\frac{2}{n}\sum_{i=1}^n\biggl(\sum_{j=1}^n\E|w_{ij}|^2\biggr)^2
+ \frac{1}{n}\sum_{i,j=1}^n\bigl(\E|w_{ij}|^4 - 2(\E|w_{ij}|^2)^2\bigr).
\]

Since both $\E|\w|^4$ and $(\E|w_{ij}|^2)^2$ are bounded above by
$\eps_n^2\E|\w|^2$, we have
\[
\biggl|\frac{1}{n}\sum_{i,j=1}^n \bigl(\E|w_{ij}|^4 - 2(\E|w_{ij}|^2)^2\bigr)
\biggr| \le \frac{3\eps_n^2}{n} \sum_{i,j=1}^n \E|\w|^2 \to 0
\]
by \eqref{eq:bdd_var} and Lemma \ref{lem:comp_two}.
\end{proof}

Now we are ready to prove the necessity part of Theorem \ref{thm:red_wigner}
assuming \eqref{eq:bdd_sixth}.
Assume $\E\mu_{W_n} \tto \musc$.
By Skorokhod's representation theorem \cite[Theorem 25.6]{Bil12},
we can take real-valued random variables $X,X_1,X_2,\ldots$ on
a common probability space such that $\E\mu_{W_n}$ is the distribution
of $X_n$, $\musc$ is the distribution of $X$, and $X_n \to X$ a.s.

Since
\begin{equation} \label{eq:sup_six_bdd}
\sup_{n\in\N} \E X_n^6 = \sup_{n\in\N} \int_\R x^6 \,\E\mu_{W_n}(dx)
< \infty,
\end{equation}
$(X_n^2)_{n\in\N}$ and $(X_n^4)_{n\in\N}$ are uniformly integrable.
Thus, $X_n\to X$ a.s.\ implies $\E X_n^2 \to \E X^2$
and $\E X_n^4 \to \E X^4$.
By Lemma \ref{lem:comp_two} and Lemma \ref{lem:comp_four}, we have
\[
\frac{1}{n}\sum_{i,j=1}^{n}\E|\w|^2 \to
\E X^2 = \int_\R x^2\,\musc(dx) = 1
\]
and
\[
\frac{2}{n}\sum_{i=1}^n \biggl( \sum_{j=1}^n \E|\w|^2 \biggr)^2
\to \E X^4 = \int_\R x^4\,\musc(dx) = 2.
\]
See \cite[2.1.1]{AGZ10} for a computation of the moments of $\musc$.

Here is the punchline: using $(\E Y)^2 \le \E Y^2$
in the first line, and then applying the two convergences we have just
established, we have
\[
\begin{split}
\biggl(\frac{1}{n}\sum_{i=1}^n\Bigl|\sum_{j=1}^n \E|\w|^2-1\Bigr|\biggr)^2
&\le \frac{1}{n} \sum_{i=1}^n \biggl( \sum_{j=1}^n\E|\w|^2 - 1\biggr)^2\\
&= \frac{1}{n}\sum_{i=1}^n \biggl(\sum_{j=1}^n \E|\w|^2\biggr)^2
- \frac{2}{n}\sum_{i,j=1}^n \E|\w|^2 + 1 \\
&\to 1 - 2 + 1 = 0.
\end{split}
\]
Thus, the necessity in Theorem \ref{thm:red_wigner} is proved
under the assumption \eqref{eq:bdd_sixth}.

\section{Lifting the bounded sixth moment condition} \label{sec:lift_six}

In this section, we prove the necessity part of Theorem \ref{thm:red_wigner}
without assuming the bounded sixth moment condition \eqref{eq:bdd_sixth}.
We rely on the following lemma, which will be proved in the next section.

\begin{lemma}[bounded eighth moments] \label{lem:bdd_row_moment}
If \eqref{eq:row_bdd} and \eqref{eq:neg} hold,
then
\[ \sup_{n\in\N} \int_\R x^8 \,\E\mu_{W_n}(dx) < \infty. \]
\end{lemma}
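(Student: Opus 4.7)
The plan is to apply Wigner's moment method to the eighth moment of $\E\mu_{W_n}$, exploiting the row bound \eqref{eq:row_bdd} together with the pointwise bound $|\w|\le\eps_n\le 1$ from \eqref{eq:neg}. Starting from the trace identity
\[ \int_\R x^8 \,\E\mu_{W_n}(dx)
 = \frac{1}{n} \sum_{i_1,\ldots,i_8=1}^n \E[w_{i_1 i_2} w_{i_2 i_3} \cdots w_{i_8 i_1}], \]
I would first use $\E\w = 0$ together with independence of the distinct upper-triangular entries to argue that each surviving summand corresponds to a closed walk in which every edge is traversed at least twice (any edge used only once factors out as $\E w_{ij}=0$ or $\E\bar w_{ij}=0$). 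Combined with $|\w|^m\le|\w|^2$ for $m\ge 2$, this yields
\[ |\E[w_{i_1 i_2} \cdots w_{i_8 i_1}]| \le \prod_\alpha \E|w_{e_\alpha}|^2, \]
where the product runs over the distinct edges $e_\alpha$ of the walk.

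Next I would organize the sum by walk shape, that is, by the pattern of coincidences among $i_1,\ldots,i_8$. Because each distinct edge is used at least twice and the walk has length $8$, the underlying graph of any shape has at most four distinct edges and, since it is connected, at most five vertices; the number of shapes is therefore a finite constant $K$ depending only on $8$. For each shape on $v\le 5$ vertices I would pick a spanning tree, bound each non-tree edge pointwise by $\E|\w|^2\le\eps_n^2\le 1$, and then carry out the sum over injective labellings $\sigma\colon\{1,\ldots,v\}\hookrightarrow\{1,\ldots,n\}$ by a depth-first traversal of the tree: the sum over the root contributes at most $n$, and each subsequent sum over a newly attached vertex contributes at most $C$ by \eqref{eq:row_bdd}. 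This gives a contribution of at most $nC^{v-1}$ to $\E\tr W_n^8$ per shape, so after dividing by $n$ and summing over the $K$ shapes we obtain a bound of at most $KC^4$, uniformly in $n$.

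The main obstacle is the combinatorial bookkeeping: classifying closed walks of length $8$ into finitely many shapes, making the ``each edge at least twice'' argument precise in the Hermitian case (where $w_{ij}$ and $w_{ji}=\bar w_{ij}$ are dependent but both determined by a single independent upper-triangular variable), and cleanly separating tree from non-tree edges in the summation. None of this is conceptually difficult, and I would expect Section~\ref{sec:comp_mom} to carry out a more systematic version of this computation that also tracks the combinatorial constants needed elsewhere for the sufficiency proof.
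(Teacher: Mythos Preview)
Your proposal is correct and matches the paper's approach: Section~\ref{sec:comp_mom} sets up exactly the walk-shape decomposition, the ``each edge at least twice'' reduction (Lemma~\ref{lem:once_zero}), and the spanning-tree bound via the row-sum condition (Lemma~\ref{lem:tree_bdd}), and the proof of Lemma~\ref{lem:bdd_row_moment} then combines these to get a uniform bound of order $C^4$ per shape. The paper happens to route through the moment-approximation Lemma~\ref{lem:moment_approx} (since that machinery is needed anyway for the sufficiency proof), but the underlying argument is precisely the one you describe.
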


The number $8$ is here just because it is even and greater than $6$.
In fact, our proof easily extends to any even natural number.
Given $K_n \subset \{1,\ldots,n\}$ for all $n\in\N$,
let $W_n^K$ be the matrix obtained from $W_n$ by replacing $\w$ with $0$
for all $(i,j) \not\in K_n\times K_n$.

\begin{lemma} \label{lem:almost_row_bdd}
Assume \eqref{eq:margin}.
For any $\eps > 0$, there exist $K_n \subset \{1,\ldots,n\}$ with
$|K_n| \ge (1-\eps)n$ such that
\[ \sum_{j=1}^n \E|\w|^2 \le C \qquad \text{for all $n\in\N$
and $i \in K_n$} \]
for some $C < \infty$.
\end{lemma}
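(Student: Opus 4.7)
The plan is a proof by contradiction. Write $r_i^{(n)} := \sum_{j=1}^n \E|\w|^2$ for the $i$-th row-sum of variances, and $J_n^C := \{i \in \{1,\ldots,n\} : r_i^{(n)} > C\}$ for the set of rows exceeding threshold $C$. The lemma asserts the existence of a single $C$ with $|J_n^C| \le \eps n$ for all $n$. Its negation is: for every $C < \infty$, some $n$ satisfies $|J_n^C| > \eps n$. I first strengthen this to: for every $C$, the set of such $n$ is in fact unbounded. Indeed, if at some threshold $C$ only finitely many $n$ violate the bound, then because $|J_n^C| \to 0$ as $C \to \infty$ for each fixed $n$, one may replace $C$ by a larger constant $C'$ that kills the finitely many exceptions, yielding the lemma.

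Using this upgraded negation, I pick inductively $n_k \ge \max(k^2,\,n_{k-1}+1)$ with $|J_{n_k}^k| > \eps n_k$. For $k \ge 1/\eps$ the integer $M_k := \lfloor n_k/k \rfloor$ satisfies $M_k \le \eps n_k$, so I can choose a subset $J_{n_k} \subset J_{n_k}^k$ with $|J_{n_k}| = M_k$. Extending by $J_n := \emptyset$ for $n \notin \{n_k : k \in \N\}$ gives a sequence with $|J_n|/n \le 1/k \to 0$ along the subsequence (and $0$ otherwise), hence $|J_n|/n \to 0$. On the other hand, each $i \in J_{n_k}$ contributes $r_i^{(n_k)} > k$, so
\[
\frac{1}{n_k}\sum_{i\in J_{n_k}} r_i^{(n_k)}
> \frac{k M_k}{n_k}
\ge 1 - \frac{k}{n_k}
\ge 1 - \frac{1}{k} \to 1,
\]
which contradicts \eqref{eq:margin}.

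The only nontrivial step is the ``upgrade'' in the first paragraph, where an existence statement at each fixed threshold is converted into an unbounded sequence of witnesses; this is precisely where the trivial fact $|J_n^C|\to 0$ at fixed $n$ is used to absorb the small-$n$ regime. After that, the greedy choice inside $J_{n_k}^k$, together with the growth condition $n_k \ge k^2$ that makes the floor-rounding slack $k/n_k$ negligible, is mechanical.
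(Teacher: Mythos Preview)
Your argument is correct and follows the same strategy as the paper's proof: assume the conclusion fails, then construct sets $J_n$ with $|J_n|/n \to 0$ whose averaged row-sum contribution stays bounded away from zero, contradicting \eqref{eq:margin}. The paper organizes the contradiction slightly differently---it sorts the row sums and takes the threshold to be $\min_{i\in K_n^c} r_i^{(n)}$ along a subsequence rather than prescribing thresholds $C=k$ in advance---but the mechanism is the same (one cosmetic point: you should also set $J_{n_k}:=\emptyset$ for the finitely many $k<1/\eps$, which your extension clause does not literally cover).
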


\begin{proof}
We may assume $\eps < 1$.
Suppose that the claim is false, and let $K_n$ be the set of size
$\ce{(1-\eps)n}$ consisting of
$i \in \{1,\ldots,n\}$ with $\ce{(1-\eps)n}$ smallest $\sum_{j=1}^n \E|\w|^2$.
Notice that
\[ \min_{i\in K_n^c} \sum_{j=1}^n \E|\w|^2 \ge
\max_{i\in K_n} \sum_{j=1}^n \E|\w|^2 \to \infty \]
along some subsequence $(n_k)_{k\in\N}$, where $K_n^c:=\{1,\ldots,n\}\setminus
K_n$.
For all $n=n_1,n_2,\ldots$ such that the left side in the previous display is
at least $1$, let $J_n$ be any subset of $K_n^c$ such that
\[ |J_n| = \biggl\lceil\frac{\eps n}{\min_{i\in K_n^c}
\sum_{j=1}^n \E|\w|^2}\biggr\rceil. \]
Then $|J_{n_k}|/n_k \to 0$ follows from $\ce{x} \le x+1$.
However,
\[ \frac{1}{n}\sum_{i\in J_n} \sum_{j=1}^n \E|\w|^2
\ge \frac{|J_n|}{n} \min_{i\in K_n^c} \sum_{j=1}^n \E|\w|^2 \ge \eps \]
for $n=n_1,n_2,\ldots$ for which $J_n$ is defined.
If we let $J_n := \emptyset$ for all $n$ for which $J_n$ is undefined,
then $J_n$ contradicts \eqref{eq:margin}.
\end{proof}

Given $K_n \subset \{1,\ldots,n\}$ for each $n\in\N$,
let $W_n^K$ be the matrix obtained from $W_n$ by replacing $\w$ with $0$
for all $(i,j) \not\in K_n\times K_n$.

\begin{lemma} \label{lem:almost_six_bdd}
If \eqref{eq:margin} and \eqref{eq:neg} hold, then there are
$K_n \subset \{1,\ldots,n\}$ with $|K_n|/n \to 1$ such that
\begin{equation} \label{eq:trim_bdd_sixth}
\sup_{n\in\N} \int_\R x^6 \,\E\mu_{W_n^K}(dx) < \infty.
\end{equation}
\end{lemma}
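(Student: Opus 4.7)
My plan is a diagonal construction combining Lemma~\ref{lem:almost_row_bdd} (to identify most rows as having uniformly bounded variance sum) with Lemma~\ref{lem:bdd_row_moment} (to control the sixth moment of the truncated matrix). For each $k \in \N$, applying Lemma~\ref{lem:almost_row_bdd} with $\eps = 1/k$ produces, for every $n$, a subset $K_n^{(k)} \subseteq \{1,\ldots,n\}$ with $|K_n^{(k)}|/n \geq 1 - 1/k$ and some $C_k < \infty$ satisfying $\sum_{j=1}^n \E|\w|^2 \leq C_k$ whenever $i \in K_n^{(k)}$; one can arrange these sets to be nested in $k$ by taking them to be the $\ce{(1-1/k)n}$ rows of smallest variance sum. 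For each fixed $k$, the Hermitian Wigner ensemble $W_n^{K^{(k)}}$ satisfies both \eqref{eq:row_bdd} (with constant $C_k$) and \eqref{eq:neg}, so Lemma~\ref{lem:bdd_row_moment} gives $M_k := \sup_n \int_\R x^8 \, \E\mu_{W_n^{K^{(k)}}}(dx) < \infty$, whence $\sup_n \int_\R x^6 \, \E\mu_{W_n^{K^{(k)}}}(dx) \leq M_k + 1$ by the pointwise bound $x^6 \leq 1 + x^8$.

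I would then diagonalize. Pick an increasing sequence $n_1 < n_2 < \cdots$ of integers with $n_k \to \infty$ and define $K_n := K_n^{(k)}$ for $n_k \leq n < n_{k+1}$. Then $|K_n|/n \geq 1 - 1/k$ for $n \geq n_k$, so $|K_n|/n \to 1$, and $\int_\R x^6 \, \E\mu_{W_n^K}(dx) \leq M_k + 1$ throughout the $k$-th block.

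The main obstacle I anticipate is achieving a \emph{uniform} bound across blocks: since $M_k$ generally grows with $C_k$, the naive diagonal only yields $\sup_n \int_\R x^6 \, \E\mu_{W_n^K}(dx) \leq \sup_k (M_k + 1)$, which need not be finite. The resolution is to select the $n_k$ carefully (equivalently, to pick $k(n) \to \infty$ slowly enough) so that $M_{k(n)}$ remains bounded. This likely exploits the explicit polynomial-in-$C_k$ form of the bound produced by the moment-method proof of Lemma~\ref{lem:bdd_row_moment} in Section~\ref{sec:comp_mom}, together with the freedom to choose $K_n$ adaptively from the nested family $\{K_n^{(k)}\}_k$ at each $n$ rather than in fixed blocks, so that the sixth moment is kept below a prescribed threshold while still $|K_n|/n \to 1$.
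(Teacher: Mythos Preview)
You have correctly diagnosed the obstacle, but your proposed remedies do not close the gap. The moment-method bound behind Lemma~\ref{lem:bdd_row_moment} is essentially $|\Gamma_8|\,C_k^4$ (see Lemmas~\ref{lem:tree_bdd} and~\ref{lem:moment_approx}), so if $C_k\to\infty$ --- which condition~\eqref{eq:margin} certainly permits --- then $M_k\to\infty$ as well. No choice of $k(n)\to\infty$, however slow, keeps $M_{k(n)}$ bounded; and adaptively selecting $K_n$ from the nested family $\{K_n^{(k)}\}_k$ does not help either, since achieving $|K_n|/n\to1$ forces you eventually to include rows with row-sum near $C_k$ for arbitrarily large $k$, and the combinatorics of Section~\ref{sec:comp_mom} offers no control on the sixth moment that is independent of the maximal row sum. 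Nothing in your toolkit produces a bound uniform in $k$.

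The paper's argument supplies the missing ingredient from outside the moment method. It uses --- tacitly, since the lemma is invoked only in the necessity direction --- the hypothesis $\E\mu_{W_n}\tto\musc$. For fixed $\eps>0$ one takes $K_n$ as in Lemma~\ref{lem:almost_row_bdd}; if $\int x^6\,d\E\mu_{W_{n_k}^K}$ exceeded $3^6$ along some subsequence, then the eighth-moment bound (Lemma~\ref{lem:bdd_row_moment}, which \emph{is} available for this fixed $\eps$) yields tightness and a subsequential weak limit $\mu$ with $\int x^6\,d\mu>3^6$, hence $\mu([-3,3]^c)>0$. The Cauchy interlacing law then transfers mass outside $[-3,3]$ from $\mu_{W_n^K}$ to $\mu_{W_n}$, contradicting the fact that $\musc$ is supported on $[-2,2]$. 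Thus $\limsup_n\int x^6\,d\E\mu_{W_n^K}\le 3^6$, a bound \emph{independent of $\eps$}, and now the diagonalization over $\eps=1/k$ goes through. The uniform constant comes from the support of $\musc$ via interlacing, not from $C_k$ --- that is the idea your approach is missing.
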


\begin{proof}
Let $\eps \in (0,1)$, and $K_n$ and $C$ be as in the preceding lemma.
Suppose that
\[ \int_\R x^6 \,\E\mu_{W_{n_k}^K}(dx) \to c > 3^6 \qquad
\text{as $k\to\infty$} \]
for some $n_1<n_2<\cdots$.
By Lemmas \ref{lem:bdd_row_moment} and \ref{lem:almost_row_bdd},
\[ \sup_{n\in\N} \int_\R x^8\,\E\mu_{W_n^K}(dx) < \infty. \]
This implies that $(\E\mu_{W_{n_k}^K})_{k\in\N}$ is tight,
thus it has a subsequence weakly convergent to some $\mu$,
which we still, by abuse of notation, denote by
$(\E\mu_{W_{n_k}^K})_{k\in\N}$.
By Skorokhod's theorem and the uniform integrability argument
that followed \eqref{eq:sup_six_bdd}, we have
\[ \int_\R x^6 \,\mu(dx) = \lim_{k\to\infty} \int_\R x^6
\,\E\mu_{W_{n_k}^K}(dx) > 3^6, \]
and thus $\mu([-3,3]^c) > 0$.

If $W_n^K$ has $k$ eigenvalues outside $[-3,3]$,
then the Cauchy interlacing law \cite[Exercise 1.3.14]{Tao12}
implies that $W_n$ has at least $k$ eigenvalues outside $[-3,3]$.
Thus
\[ \mu_{W_n}([-3,3]^c) \ge (1-\eps)\mu_{W_n^K} ([-3,3]^c), \]
and therefore
\[ \E\mu_{W_n}([-3,3]^c) \ge (1-\eps)\E\mu_{W_n^K}([-3,3]^c) \]
by Lemma \ref{lem:exp_change}.
Since $\E\mu_{W_{n_k}} \tto \musc$, the portmanteau theorem implies
\[
\begin{split}
\musc((-3,3)^c) &\ge \limsup_{k\to\infty} \E\mu_{W_{n_k}}((-3,3)^c) \\
&\ge (1-\eps) \limsup_{k\to\infty} \E\mu_{W_{n_k}^K}((-3,3)^c) \\
&\ge (1-\eps) \liminf_{k\to\infty} \E\mu_{W_{n_k}^K}([-3,3]^c) \\
&\ge (1-\eps) \mu([-3,3]^c) > 0,
\end{split}
\]
but this contradicts the fact that $\musc$ is supported on $[-2,2]$.
Thus, we have
\[ \limsup_{n\to\infty} \int_\R x^6 \,\E\mu_{W_n^K}(dx) \le 3^6. \]

Since $\eps > 0$ is arbitrary, we have actually proved that for each $\eps>0$
we can choose $K_n^\eps \subset \{1,\ldots,n\}$ such that
$|K^\eps_n| \ge (1-\eps)n$ and
\[ \limsup_{n\to\infty} \int_\R x^6 \,\E\mu_{W_n^{K^\eps}}(dx) \le 3^6. \]
Choose positive integers $m_1 < m_2 < \cdots$ so that
\[ \int_\R x^6 \,\E\mu_{W_n^{K^{1/k}}} \le 3^6+1
\qquad \text{for all $n\ge m_k$,} \]
and let $K_n := K_n^{1/k}$ for $n=m_k,\ldots,m_{k+1}-1$
and $K_n := \emptyset$ for $n=1,\ldots,m_1-1$.
(We are redefining $K_n$ by abuse of notation.)
Then we have $|K_n|/n \to 1$ and \eqref{eq:trim_bdd_sixth}.
\end{proof}

We are ready to prove the necessity part of Theorem~\ref{thm:red_wigner}.
Assume \eqref{eq:margin}, \eqref{eq:neg}, and $\E\mu_{W_n}\tto\musc$.
By Lemma \ref{lem:as_reduct}, we have $\mu_{W_n} \tto \musc$ a.s.
If $K_1,K_2,\ldots$ are as in Lemma~\ref{lem:almost_six_bdd}, then
\[ \frac{\rank(W_n - W_n^K)}{n} \le \frac{2(n-|K_n|)}{n} \to 0, \]
and thus $\mu_{W_n^K} \tto \musc$ a.s.\ by Lemma~\ref{lem:rank_ineq}.
By another application of Lemma \ref{lem:as_reduct}, $\E\mu_{W_n^K}\tto\musc$.
As we have \eqref{eq:trim_bdd_sixth}, the previous section tells us that
\[
\frac{1}{n}\sum_{i\in K_n}\Bigl|\sum_{j\in K_n}\E|\w|^2-1\Bigr| \to 0.
\]
Since $|K_n^c|/n \to 0$, the assumption \eqref{eq:margin} implies
\eqref{eq:row_one}.
Thus, the necessity part of Theorem~\ref{thm:red_wigner} is proved assuming that
Lemma \ref{lem:bdd_row_moment} holds.

\section{Computation of moments} \label{sec:comp_mom}

The goal of this section is to prove Lemma~\ref{lem:bdd_row_moment}
and also establish some arguments needed in the next section.
We use a variant of Wigner's original moment method that can handle
entries with non-identical variances.
Those that are very familiar with these arguments may want to
jump ahead to the proof of Lemma~\ref{lem:bdd_row_moment}.

Assume \eqref{eq:row_bdd} and \eqref{eq:neg} throughout this section.
Recall that
\begin{equation} \tag{\ref{eq:moment_trace}}
\int_\R x^k \,\E\mu_{W_n}(dx) = \frac{1}{n}\E\tr W_n^k
\end{equation}
for all $k \in \N$.
In this section, we compute the asymptotics of $n^{-1}\E\tr W_n^k$
as $n\to\infty$.

Fix $k \in \N$.
The boldface lower case letters $\bi, \bj, \ldots$ will denote
$(i_0,\ldots,i_k)$, $(j_0,\ldots,j_k)$, and so on.
Let us call a $(k+1)$-tuple $\bi$ with $i_0=i_k$
a \emph{closed walk} of length $k$.
For any closed walk $\bi$ with $i_0,\ldots,i_k \in \{1,\ldots,n\}$, let
\[ w_\bi := \prod_{s=1}^k w_{i_{s-1}i_s}. \]
Notice that
\[ \frac{1}{n}\E\tr W_n^k = \frac{1}{n}\sum_{\bi} \E w_\bi  \]
where $\bi$ ranges over all closed walks $\bi$ (of length $k$) with
$i_0,\ldots,i_k \in \{1,\ldots,n\}$.

Now we gather together the closed walks which have the same ``shape."
Let us say that two closed walks $\bi$ and $\bj$ are \emph{isomorphic}
if for any $s,t = 0,\ldots,k$ we have $i_s=i_t$ if and only if $j_s=j_t$.
A \emph{canonical closed walk} of length $k$ on $t \in \N$ vertices
is a closed walk $\bc$ such that
\begin{enumerate}
\item
$c_0=c_k=1$,
\item
$\{c_0,\ldots,c_k\}=\{1,\ldots,t\}$, and
\item
$c_i \le \max\{c_0,\ldots,c_{i-1}\}+1$ for each $i=1,\ldots,k$.
\end{enumerate}
Let $\gamma(k,t)$ denote the set of such walks.
It is straightforward to show that any closed walk is isomorphic to
exactly one canonical closed walk.
For each $\bc \in \gamma(k,t)$, let $L(n,\bc)$ denote the set of all closed
walks $\bi$ with $i_0,\ldots,i_k \in \{1,\ldots,n\}$
which are isomorphic to $\bc$.
Then we have
\begin{equation} \label{eq:moment_exp}
\frac{1}{n}\E\tr W_n^k = \frac{1}{n} \sum_{i_0,\ldots,i_k=1}^n
\E w_\bi
= \frac{1}{n} \sum_{t=1}^{k} \sum_{\bc \in \gamma(k,t)}
\sum_{\bi \in L(n,\bc)} \E w_\bi,
\end{equation}
where the upper bound of $t$ is (rather arbitrarily) set to $k$
since $\gamma(k,t)$ is empty for all $t > k$.

We will fix $t \in \N$ and $\bc \in \gamma(k,t)$, and
compute $n^{-1}\sum_{\bi \in L(n,\bc)} \E w_\bi$.
As a first step, we get an easy case out of the way.

\begin{lemma}[zeroed out terms] \label{lem:once_zero}
If $\bc$ crosses some edge $\{i,j\}$ exactly once, i.e.,
$\{c_{s-1},c_s\} = \{i,j\}$ for exactly one $s \in \{1,\ldots,k\}$, then
$\E w_\bi = 0$ for any $n \in \N$ and $\bi \in L(n,\bc)$.
\end{lemma}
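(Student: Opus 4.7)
The plan is to exploit two ingredients: the independence of the upper triangular entries of a Hermitian Wigner ensemble, and the mean-zero hypothesis $\E w_{ij}=0$ from \eqref{eq:neg}. The idea is that an edge crossed exactly once by $\bc$ produces an isolated factor in $w_\bi$ whose expectation is zero, and this factor is independent of all the others, so the entire expectation factors and picks up a zero.

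In carrying this out, I would first translate the hypothesis on $\bc$ to one on $\bi$ via the isomorphism. If $s$ is the unique index with $\{c_{s-1},c_s\}=\{i,j\}$, then from $c_a=c_b \iff i_a=i_b$ for all $a,b$ it follows that the unordered pair $\{i_{s-1},i_s\}$ differs from $\{i_{t-1},i_t\}$ for every $t\ne s$, i.e., the walk $\bi$ also traverses $\{i_{s-1},i_s\}$ exactly once. Next, I would group the factors of $w_\bi=\prod_{t=1}^k w_{i_{t-1}i_t}$ by unordered edge $\{i_{t-1},i_t\}$. Using the Hermitian relation $w_{ba}=\overline{w_{ab}}$, all factors in a given group are functions of a single upper triangular entry, and factors in distinct groups are functions of disjoint upper triangular entries, hence independent. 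The group corresponding to $\{i_{s-1},i_s\}$ consists of the single factor $w_{i_{s-1}i_s}$, which has expectation zero by \eqref{eq:neg}. Factorizing $\E w_\bi$ across groups therefore produces a zero factor, giving $\E w_\bi=0$.

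I do not expect a meaningful obstacle here: this is essentially the standard observation that the expected product of independent mean-zero random variables vanishes, dressed in the combinatorics of closed walks. The only point requiring a bit of care is the Hermitian (as opposed to real symmetric) case, where one must treat $w_{ij}$ and $w_{ji}=\overline{w_{ij}}$ as measurable functions of a \emph{single} underlying independent random variable rather than as two independent quantities, which is exactly why the grouping is done by unordered (rather than ordered) edges.
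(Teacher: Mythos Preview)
Your proposal is correct and follows essentially the same approach as the paper's proof: isolate the factor corresponding to the once-crossed edge, observe that it is independent of the remaining (bounded) factors by the Hermitian Wigner structure, and use $\E w_{ij}=0$ from \eqref{eq:neg} to conclude. The paper's argument is a terse three-sentence version of yours; your added care about the isomorphism transferring the ``crossed exactly once'' property from $\bc$ to $\bi$, and about grouping by unordered edges in the Hermitian case, makes the dependence structure explicit but does not change the underlying idea.
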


\begin{proof}
Since we have \eqref{eq:neg} and the upper triangular entries of $W_n$
are independent,
$w_\bi$ is the product of $w_{ij}$ (or $w_{ji}$) and a bounded random variable
which is independent from $w_{ij}$.
Since $\E w_{ij} = 0$, we have $\E w_\bi = 0$.
\end{proof}

Now assume that $\bc$ does not cross any edge exactly once, i.e.,
for each $s \in \{1,\ldots,k\}$ there is some $r \in \{1,\ldots,k\}$
distinct from $s$ such that $\{c_{s-1},c_s\} = \{c_{r-1},c_r\}$.
To compute $n^{-1}\sum_{i\in L(n,\bc)}\E w_\bi$, we introduce some notation.
Let $G(\bc)$ be the graph (possibly having loops but no multiple edges)
with the vertex set
\[ \{i_0,\ldots,i_k\} \]
and the edge set
\[ \bigl\{\{i_{t-1},i_t\} \bigm| t=1,\ldots,k\bigr\}. \]

For a tree $T$, let $V(T)$ and $E(T)$ denote the vertex set and
the edge set of $T$.
Given a finite tree $T$ and $n \in \N$, we let $I(T,n)$ denote
the set of injections from the vertex set $V(T)$ of $T$ to $\{1,\ldots,n\}$.
For each $F \in I(T,n)$, let us write
\[
\Pi(F) := \prod_{e\in E(T)} \E|w_{F(x_e)F(y_e)}|^2
\]
where $x_e,y_e \in V(T)$ denote the endpoints of $e \in E(T)$.
We are omitting the dependence of $\Pi(F)$ on $T$, but there should be
no confusion.
Note that the value $\Pi(F)$ is well-defined because $W_n$ is Hermitian.

Now we get back to the problem of computing
$n^{-1}\sum_{i\in L(n,\bc)}\E w_\bi$.
As each edge of $G(\bc)$ is crossed at least twice by $\bc$,
there are at most $k/2$ edges in $G(\bc)$.
As $G(\bc)$ is a connected graph with $t$ vertices, we have $t\le k/2+1$,
and $G(\bc)$ has a spanning tree $S$ with $t-1$ edges. Choose some $S$.
For each $\bi \in L(n,\bc)$, consider the injection
$F_\bi\colon \{1,\ldots,t\} \to \{1,\ldots,n\}$ given by
$F_\bi(c_s) := i_s$ for each $s=0,\ldots,k$.

First assume $t = k/2+1$.
Since $S$ has $k/2$ edges and each edge of $S$ is crossed twice by $\bc$,
we have $S = G(\bc)$.
As each edge of $G(\bc)$ is traversed exactly once in each direction, and
the map $L(n,\bc) \to I(S,n)$ given by $\bi \mapsto F_\bi$ is a bijection,
we have
\begin{equation} \label{eq:tree_sum}
\frac{1}{n} \sum_{\bi\in L(n,\bc)} \E w_\bi
= \frac{1}{n} \sum_{\bi\in L(n,\bc)} \Pi(F_\bi)
= \frac{1}{n} \sum_{F \in I(S,n)} \Pi(F).
\end{equation}

Now assume $t < k/2+1$. By $|w_{ij}| \le \eps_n$ and the fact that
$\bc$ crosses any edge of $G(\bc)$ at least twice, we have
\[ |\E w_\bi| \le \eps_n^{k-2(t-1)} \Pi(F_\bi). \]
Note that $\eps_n^{k-2(t-1)} \to 0$ since $t < k/2 + 1$.
By using the bijection $L(n,\bc) \to I(S,n)$ again, we have

\begin{equation} \label{eq:nontree_sum}
\begin{split}
\frac{1}{n}\sum_{\bi\in L(n,\bc)} |\E w_\bi|
&\le \frac{\eps_n^{k-2(t-1)}}{n} \sum_{\bi \in L(n,\bc)} \Pi(F_\bi) \\
&= \frac{\eps_n^{k-2(t-1)}}{n} \sum_{F \in I(S,n)} \Pi(F).
\end{split}
\end{equation}
The right side tends to $0$ by the following.

\begin{lemma}[contribution of a tree] \label{lem:tree_bdd}
If $T$ is a finite tree with $m$ edges, $x \in V(T)$, $n \in \N$,
and $i \in \{1,\ldots,n\}$, then
\begin{equation} \label{eq:tree_bdd}
\sum_{\substack{F \in I(T,n) \\ F(x)=i}} \Pi(F) \le C^m
\end{equation}
where $C$ is as in \eqref{eq:row_bdd}.
Note that it follows that
\[ \frac{1}{n} \sum_{F\in I(T,n)} \Pi(F) \le C^m. \]
\end{lemma}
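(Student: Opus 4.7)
The plan is to prove the lemma by induction on the number of edges $m = |E(T)|$. The base case $m = 0$ is immediate: $T$ consists of the single vertex $x$, so there is exactly one $F \in I(T,n)$ with $F(x) = i$, namely $F$ mapping $x$ to $i$, and $\Pi(F)$ is an empty product equal to $1 \le C^0$.

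For the inductive step, since $T$ has $m \ge 1$ edges, it has at least two vertices and hence at least two leaves; I would pick a leaf $y \in V(T)$ with $y \ne x$, and let $z$ be its unique neighbor. Let $T'$ denote the tree obtained by deleting $y$, which has $m-1$ edges, and write $e_0$ for the edge $\{y,z\}$. Every injection $F \in I(T,n)$ with $F(x) = i$ restricts to an injection $F' \in I(T',n)$ with $F'(x) = i$; conversely, extending $F'$ amounts to choosing $F(y) = j \in \{1,\ldots,n\} \setminus F'(V(T'))$. Since $\Pi$ factors as $\Pi(F) = \Pi(F') \cdot \E|w_{F'(z)j}|^2$, I would split the sum as
\[
\sum_{\substack{F \in I(T,n) \\ F(x)=i}} \Pi(F)
= \sum_{\substack{F' \in I(T',n) \\ F'(x)=i}} \Pi(F')
\sum_{\substack{j=1 \\ j \notin F'(V(T'))}}^n \E|w_{F'(z)j}|^2.
\]

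Since the summands $\E|w_{F'(z)j}|^2$ are nonnegative, the inner sum is bounded above by $\sum_{j=1}^n \E|w_{F'(z)j}|^2 \le C$ by \eqref{eq:row_bdd}. Applying the inductive hypothesis to $T'$ (with the same distinguished vertex $x$ and value $i$) then gives
\[
\sum_{\substack{F \in I(T,n) \\ F(x)=i}} \Pi(F)
\le C \sum_{\substack{F' \in I(T',n) \\ F'(x)=i}} \Pi(F')
\le C \cdot C^{m-1} = C^m,
\]
completing the induction. The second inequality follows by summing over $i \in \{1,\ldots,n\}$ and dividing by $n$, since $\sum_{F \in I(T,n)} \Pi(F) = \sum_{i=1}^n \sum_{F \in I(T,n),\, F(x)=i} \Pi(F) \le n C^m$.

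There is no real obstacle here; the only subtlety is the injectivity constraint on $F$, which would a priori force the inner sum to omit the $t-1$ values already used by $F'$. This is handled for free by nonnegativity of $\E|w_{ij}|^2$: we discard the restriction and bound by the full row sum, which is the very quantity controlled by \eqref{eq:row_bdd}.
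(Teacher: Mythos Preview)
Your proof is correct and follows essentially the same approach as the paper: induction on the number of edges, peeling off a leaf $y \ne x$, and bounding the resulting inner sum over the value at $y$ by the full row sum using \eqref{eq:row_bdd}. The only cosmetic difference is that the paper groups terms by fixing $F(z)=j$ first and then summing over $j$, whereas you restrict to $F'$ on $T\setminus y$ first and then sum over $j=F(y)$; these are the same computation.
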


\begin{proof}
There is nothing to prove if $m=0$.
To proceed by induction, assume that \eqref{eq:tree_bdd} holds,
and let $T$ be a tree with $m+1$ edges.
Pick any leaf $y \in V(T)$ distinct from $x$, and let $z \in V(T)$ be the only
vertex that is adjacent to $y$.
Since
\[
\begin{split}
\sum_{\substack{F\in I(T,n) \\ F(x)=i,F(z)=j}} \Pi(F)
&\le \sum_{\substack{H\in I(T\setminus y,n) \\ H(x)=i,H(z)=j}}
\biggl( \Pi(H)\sum_{l=1}^n \E |w_{jl}|^2 \biggr) \\
&\le C\sum_{\substack{H\in I(T\setminus y,n) \\ H(x)=i,H(z)=j}} \Pi(H)
\end{split}
\]
by \eqref{eq:row_bdd} for all $j=1,\ldots,n$, we have
\[
\begin{split}
\sum_{\substack{F\in I(T,n) \\ F(x)=i}} \Pi(F)
&= \sum_{j=1}^n \sum_{\substack{F\in I(T,n) \\ F(x)=i,F(z)=j}} \Pi(F) \\
&\le C\sum_{j=1}^n \sum_{\substack{H\in I(T\setminus y,n) \\ H(x)=i,H(z)=j}}
\Pi(H)
= C\sum_{\substack{H\in I(T\setminus y,n) \\ H(x)=i}} \Pi(H) \le C^{m+1}
\end{split}
\]
by the induction hypothesis.
\end{proof}

We have now shown the following; see \eqref{eq:tree_sum} and
\eqref{eq:nontree_sum}.

\begin{lemma}[contribution of a canonical walk] \label{lem:class_conv}
Let $t \in \N$ and $\bc \in \gamma(k,t)$.
Assume that $\bc$ does not cross any edge exactly one, i.e.,
for each $s \in \{1,\ldots,k\}$ there is some $r \in \{1,\ldots,k\}$
distinct from $s$ such that $\{c_{s-1},c_s\} = \{c_{r-1},c_r\}$.
Then we have $t \le k/2+1$.
If $t < k/2 + 1$, we have
\[
\frac{1}{n} \sum_{\bi\in L(n,\bc)} \E w_\bi \to 0.
\]
If $t = k/2 + 1$, then $G(\bc)$ is a tree, and we have
\[
\frac{1}{n} \sum_{\bi\in L(n,\bc)} \E w_\bi
= \frac{1}{n} \sum_{F \in I(G(\bc),n)} \Pi(F).
\]
\end{lemma}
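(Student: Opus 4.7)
The plan is to assemble the three claims of the lemma from the estimates already derived in the paragraphs preceding its statement. First I would verify the vertex count bound $t \le k/2 + 1$: the hypothesis that $\bc$ never crosses any edge exactly once means every edge of $G(\bc)$ is used at least twice among the $k$ steps of $\bc$, so $|E(G(\bc))| \le k/2$. Since $G(\bc)$ is connected with $t$ vertices, it has at least $t-1$ edges, and combining the two inequalities gives $t-1 \le k/2$.

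Next I would handle the two regimes separately. In the case $t < k/2 + 1$, the pointwise bound $|\E w_\bi| \le \eps_n^{k-2(t-1)} \Pi(F_\bi)$ established in the text, combined with Lemma \ref{lem:tree_bdd} applied to the chosen spanning tree $S$ (which has $t-1$ edges), yields
\[
\frac{1}{n} \sum_{\bi \in L(n,\bc)} |\E w_\bi|
\le \eps_n^{k-2(t-1)} C^{t-1},
\]
which tends to zero because $k - 2(t-1) > 0$ and $\eps_n \to 0$. In the case $t = k/2 + 1$, the bound $|E(G(\bc))| \le k/2$ is saturated at $t-1$, so the spanning tree $S$ must coincide with $G(\bc)$; in particular, $G(\bc)$ is a tree. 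I would then invoke the bijection $\bi \mapsto F_\bi$ between $L(n,\bc)$ and $I(G(\bc),n)$ noted in the text, together with the factorization $\E w_\bi = \Pi(F_\bi)$, to conclude the claimed identity.

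The main subtle step is justifying this factorization in the $t = k/2 + 1$ case. Each of the $k/2$ edges is crossed exactly twice (since $k$ total crossings are distributed among $k/2$ edges with at least two crossings each), but because $W_n$ is merely Hermitian, an edge traversed twice in the same direction would contribute $\E w_{ij}^2$, which can differ from $\E w_{ij} w_{ji} = \E|w_{ij}|^2$. To rule this out, I would use that each edge of a tree is a bridge: its removal splits $G(\bc)$ into two components, and since the walk $\bc$ is closed, the number of traversals in the two directions across this bridge must agree. Two total crossings then force one in each direction, which turns $w_\bi$ into a product of $|w_{F_\bi(x_e) F_\bi(y_e)}|^2$ factors and, by independence of the upper triangular entries, delivers $\E w_\bi = \Pi(F_\bi)$.
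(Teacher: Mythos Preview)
Your proof is correct and follows exactly the approach the paper takes in the paragraphs preceding the lemma statement (the lemma is presented there as a summary, pointing back to \eqref{eq:tree_sum} and \eqref{eq:nontree_sum}). Your bridge argument justifying that each edge is traversed once in each direction is in fact more explicit than the paper, which simply asserts this fact without proof.
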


Combining \eqref{eq:moment_trace}, \eqref{eq:moment_exp},
Lemma \ref{lem:once_zero}, and Lemma \ref{lem:class_conv},
we obtain the following approximation to the moments of $\E\mu_{W_n}$.

\begin{lemma}[computation of moments] \label{lem:moment_approx}
Let $\Gamma_k$ be the set of all $\bc \in \gamma(k,k/2+1)$
which crosses each edge of $G(\bc)$ twice.
(Note that $G(\bc)$ should be a tree, and that $\Gamma_k$ is finite.) Then,
\[ \int_\R x^k \,\E\mu_{W_n}(dx) - \frac{1}{n}\sum_{\bc \in \Gamma_k}
\sum_{F \in I(G(\bc),n)} \Pi(F) \to 0. \]
\end{lemma}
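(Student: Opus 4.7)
The plan is to substitute the decomposition \eqref{eq:moment_exp} into the difference and classify each canonical walk into one of three disjoint cases, to each of which exactly one of the preceding lemmas applies. First I would use \eqref{eq:moment_trace} together with \eqref{eq:moment_exp} to write
\[
\int_\R x^k \,\E\mu_{W_n}(dx) = \frac{1}{n} \sum_{t=1}^{k} \sum_{\bc \in \gamma(k,t)} \sum_{\bi \in L(n,\bc)} \E w_\bi.
\]
Then every $\bc$ appearing in this outer double sum falls into exactly one of the following: (i) $\bc$ crosses some edge of $G(\bc)$ exactly once; (ii) $\bc$ crosses every edge of $G(\bc)$ at least twice and has $t < k/2 + 1$ vertices; (iii) $\bc \in \Gamma_k$, i.e., $\bc$ crosses every edge at least twice and $t = k/2 + 1$.

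In case (i), Lemma \ref{lem:once_zero} makes the inner sum identically zero. In case (ii), the first clause of Lemma \ref{lem:class_conv} gives $n^{-1}\sum_{\bi \in L(n,\bc)} \E w_\bi \to 0$. In case (iii), the second clause of Lemma \ref{lem:class_conv} simultaneously identifies $n^{-1}\sum_{\bi \in L(n,\bc)} \E w_\bi$ with $n^{-1}\sum_{F \in I(G(\bc),n)} \Pi(F)$ and forces $G(\bc)$ to be a tree on $k/2+1$ vertices, so that the $k$ edge crossings of $\bc$ distribute as exactly two per edge, consistent with the defining condition of $\Gamma_k$.

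Subtracting the case (iii) contributions from both sides leaves the difference displayed in the lemma equal to a sum over all case (ii) walks $\bc$ of their individual $o(1)$ contributions. Since $k$ is fixed and $\bigcup_{t=1}^{k} \gamma(k,t)$ is a finite set of canonical walks, this is a fixed finite sum of $o(1)$ terms and hence $o(1)$ itself. The argument is essentially bookkeeping, and there is no substantive obstacle beyond correctly partitioning the walks and invoking the right lemma in each case; in particular, when $k$ is odd the set $\Gamma_k$ is automatically empty, and the conclusion reduces to $\int_\R x^k \,\E\mu_{W_n}(dx) \to 0$, which is still delivered by cases (i) and (ii).
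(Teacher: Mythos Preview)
Your proof is correct and follows exactly the approach the paper intends: the paper's own proof is simply the one-line remark ``Combining \eqref{eq:moment_trace}, \eqref{eq:moment_exp}, Lemma~\ref{lem:once_zero}, and Lemma~\ref{lem:class_conv},'' and you have spelled out precisely how those ingredients fit together via the three-case partition of canonical walks. Your observation that in case~(iii) the tree structure forces each of the $k/2$ edges to be crossed exactly twice (so that ``every edge at least twice with $t=k/2+1$'' coincides with the definition of $\Gamma_k$) is the only point requiring a moment's thought, and you handle it correctly.
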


We now prove Lemma \ref{lem:bdd_row_moment} as promised.

\begin{proof}[Proof of Lemma \ref{lem:bdd_row_moment}]
Let $\Gamma_k$ be as in Lemma~\ref{lem:moment_approx}.
By Lemma \ref{lem:tree_bdd}, we have
\[ \frac{1}{n} \sum_{F\in I(G(\bc),n)} \Pi(F) \le C^4
\qquad \text{for all $\bc \in \Gamma_8$.} \]
Since $\Gamma_8$ is finite, we have
\[ \sup_{n\in\N} \biggl( \frac{1}{n} \sum_{\bc\in \Gamma_8}
\sum_{F\in I(G(\bc),n)} \Pi(F) \biggr) < \infty, \]
and thus
\[ \sup_{n\in\N} \int_\R x^8 \,\E\mu_{W_n}(dx) < \infty \]
by Lemma \ref{lem:moment_approx}.
\end{proof}

\section{Proof of sufficiency} \label{sec:suff}

We continue to use the notation introduced in Section~\ref{sec:comp_mom}.
On top of \eqref{eq:row_one} and \eqref{eq:neg}, we assume \eqref{eq:row_bdd},
which is possible by Lemma \ref{lem:bdd_reduct}.
We are one lemma away from proving the sufficiency part of
Theorem \ref{thm:red_wigner}.
Our proof is essentially a manifestation of Wigner's original idea.

\begin{lemma}[each tree contributes one] \label{lem:tree_one}
For any finite tree $T$ we have
\begin{equation} \label{eq:tree_one}
\lim_{n\to\infty} \frac{1}{n} \sum_{F\in I(T,n)} \Pi(F) = 1.
\end{equation}
(Compare it to Lemma \ref{lem:tree_bdd}.)
\end{lemma}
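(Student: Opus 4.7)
The plan is induction on the number $m$ of edges of $T$. The base case $m=0$ is immediate: $T$ is a single vertex, $\Pi(F) = 1$ for every $F \in I(T,n)$ (empty product), and $|I(T,n)| = n$, so the ratio is exactly $1$.

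For the inductive step, I would fix a leaf $y \in V(T)$ with unique neighbor $z$, and set $T' := T \setminus y$, which has $m-1$ edges. Any $F \in I(T,n)$ is determined by its restriction $H := F|_{V(T')} \in I(T',n)$ together with the value $j := F(y)$ subject to $j \notin H(V(T'))$. Since $\Pi(F) = \Pi(H) \cdot \E|w_{H(z)j}|^2$, I get the exact identity
\[
\sum_{F \in I(T,n)} \Pi(F) = \sum_{H \in I(T',n)} \Pi(H) \biggl( \sum_{j=1}^n \E|w_{H(z)j}|^2 - \sum_{v \in V(T')} \E|w_{H(z)H(v)}|^2 \biggr).
\]

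Next I would set $R_i := \sum_{j=1}^n \E|w_{ij}|^2 - 1$ and $\Delta_H := \sum_{v \in V(T')} \E|w_{H(z)H(v)}|^2$, so the inner parenthesis equals $1 + R_{H(z)} - \Delta_H$. This gives
\[
\biggl|\frac{1}{n}\sum_F \Pi(F) - \frac{1}{n}\sum_H \Pi(H)\biggr| \le \frac{1}{n}\sum_H \Pi(H)\,|R_{H(z)}| + \frac{1}{n}\sum_H \Pi(H)\,\Delta_H.
\]
For the $\Delta_H$-term, the uniform bound $\E|w_{ab}|^2 \le \eps_n^2$ from \eqref{eq:neg} gives $\Delta_H \le |V(T')|\,\eps_n^2$, and Lemma \ref{lem:tree_bdd} controls $\frac{1}{n}\sum_H \Pi(H)$ by $C^{m-1}$, yielding a bound of order $\eps_n^2 \to 0$. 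For the $R$-term I would partition by the value $i = H(z)$:
\[
\frac{1}{n}\sum_H \Pi(H)\,|R_{H(z)}| = \frac{1}{n}\sum_{i=1}^n |R_i| \sum_{\substack{H \in I(T',n) \\ H(z)=i}} \Pi(H) \le C^{m-1} \cdot \frac{1}{n}\sum_{i=1}^n |R_i| \to 0
\]
by Lemma \ref{lem:tree_bdd} and \eqref{eq:row_one}. Combining these two estimates with the inductive hypothesis $\frac{1}{n}\sum_H \Pi(H) \to 1$ closes the induction.

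The only subtle point is the injectivity correction $\Delta_H$, which would be absent if we summed over all maps rather than injections. Fortunately, \eqref{eq:neg} makes each individual $\E|w_{ab}|^2$ uniformly small, so $\Delta_H$ vanishes after averaging; everything else is clean accounting using Lemma \ref{lem:tree_bdd} for uniform control of $\Pi(H)$-sums and the averaged statement \eqref{eq:row_one} for the row-sum deviations.
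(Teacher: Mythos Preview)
Your proof is correct and follows essentially the same approach as the paper's: induct on the number of edges, strip a leaf, separate the injectivity correction (your $\Delta_H$, the paper's $(m+1)\eps_n^2$ bound) from the row-sum deviation (your $R_{H(z)}$, the paper's $\sum_i(\E|w_{ji}|^2-1/n)$), and control each via Lemma~\ref{lem:tree_bdd} together with \eqref{eq:neg} and \eqref{eq:row_one} respectively. The only differences are notational and in presentation.
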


\begin{proof}
There is nothing to prove if $T$ has no edges.
To proceed by induction, assume that \eqref{eq:tree_one} holds
if $T$ has $m$ edges, and let $T$ be a tree with $m+1$ edges.
Let $x \in V(T)$ be a leaf of $T$, and $y$ be the only vertex of $T$
that is adjacent to $x$. Note that
\[
\begin{split}
\biggl| \frac{1}{n} \sum_{F\in I(T,n)} \Pi(F)
- \frac{1}{n}\sum_{H\in I(T\setminus x,n)} &\biggl( \Pi(H)\sum_{i=1}^n
\E |w_{H(y)i}|^2 \biggr) \biggr| \\
&\le \frac{(m+1)\eps_n^2}{n} \sum_{H\in I(T\setminus x,n)} \Pi(H)
\to 0
\end{split}
\]
by $|w_{ij}| \le \eps_n$ and Lemma \ref{lem:tree_bdd}
(or the induction hypothesis).
Applying Lemma~\ref{lem:tree_bdd} once again, we have
\[
\begin{split}
\biggl| \frac{1}{n} \sum_{H\in I(T\setminus x,n)}
&\biggl( \Pi(H) \sum_{i=1}^n \biggl( \E|w_{H(y)i}|^2 - \frac{1}{n} \biggr)
\biggr) \biggr| \\
&= \frac{1}{n} \biggl| \sum_{j=1}^n \biggl[ \sum_{i=1}^n
\biggl(\E |w_{ji}|^2 - \frac{1}{n} \biggr) \cdot
\sum_{\substack{H\in I(T\setminus x,n)\\ H(y)=j}} \Pi(H) \biggr] \biggr| \\
&\le \frac{C^m}{n} \sum_{j=1}^n \biggl| \sum_{i=1}^n
\biggl(\E |w_{ji}|^2 - \frac{1}{n}\biggr)\biggr| \to 0
\end{split}
\]
by \eqref{eq:row_one}.
The claimed result follows from the previous two displays and
\[ \lim_{n\to\infty} \frac{1}{n} \sum_{H\in I(T\setminus x,n)} \Pi(H) = 1.
\qedhere \]
\end{proof}

By Lemma \ref{lem:moment_approx} and Lemma \ref{lem:tree_one}, we have
\[ \int_\R x^k\,\E\mu_{W_n}(dx) \to |\Gamma_k|. \]
If $k$ is odd, then $|\Gamma_k| = 0$ because $k/2+1$ is not an integer.

Assume that $k$ is even.
A \emph{Dyck path} of length $k$ is a finite sequence $(x_0,\ldots,x_k)$
satisfying
\begin{enumerate}
\item
$x_0=x_k=0$,
\item
$|x_s-x_{s-1}|=1$ for all $s=1,\ldots,k$, and
\item
$x_s \ge 0$ for all $s=0,\ldots,k$.
\end{enumerate}
Given $\bc \in \Gamma_k$, let $D(\bc) := (x_0,\ldots,x_k)$ where
$x_s$ is the distance between $c_0$ and $c_s$ in $G(\bc)$.
Then it is clear that $D(\bc)$ is indeed a Dyck path, and it is not difficult
to see that $D$ is a bijection from $\Gamma_k$ to the set of all Dyck paths of
length $k$.
It is well-known that there are exactly $\frac{1}{k/2+1}\binom{k}{k/2}$
Dyck paths of length $k$; see \cite[Example 14.8]{vLW01}.
Thus, we have $|\Gamma_k| = \frac{1}{k/2+1}\binom{k}{k/2}$.

A direct computation (see \cite[2.1.1]{AGZ10}) yields
\[ \int_\R x^k\,\musc(dx)=\frac{1}{k/2+1}\binom{k}{k/2}
\qquad \text{for all even $k \in \N$,} \]
where the odd moments of $\musc$ are all zero due to the symmetry.
Thus,
\[ \int_\R x^k\,\E\mu_{W_n}(dx) \to \int_\R x^k\,\musc(dx)
\qquad \text{for all $k \in \N$.} \]
Since
\[ \biggl|\sum_{k=1}^\infty \frac{1}{k!}\int_\R x^k\,\musc(dx)\,r^k\biggr|
\le \sum_{k=1}^\infty \frac{|2r|^k}{k!} < \infty \]
by the ratio test for all $r\in\R$, the probability measure $\musc$
is determined by its moments by \cite[Theorem 30.1]{Bil12}.
Therefore, the moment convergence theorem \cite[Theorem 30.2]{Bil12}
tells us that $\E\mu_{W_n} \tto \musc$.

\section{Gaussian convergence} \label{sec:gauss}

Assume that $(W_n)_{n\in\N}$ satisfies the conditions of
Theorem~\ref{thm:wigner}.
In this section, we prove Corollary \ref{cor:wigner_gauss} by
showing that \eqref{eq:weak_row_one} and \eqref{eq:row_gauss} are equivalent.
We need the following two simple facts.

\begin{lemma}[converging averages] \label{lem:average}
For each $n \in \N$, let $x_{1}^{(n)},\ldots,x_{n}^{(n)} \ge 0$.
If
\[ \frac{1}{n} \sum_{i=1}^n x_{i}^{(n)} \to 0, \]
then we can take nonempty $K_n \subset \{1,\ldots,n\}$ for each $n \in \N$
so that
\[ \frac{|K_n|}{n} \to 1 \qquad \text{and} \qquad
\max_{i\in K_n} |x_i^{(n)}| \to 0. \]
\end{lemma}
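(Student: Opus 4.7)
My plan is to use a standard Markov-style truncation argument. Set $a_n := \frac{1}{n}\sum_{i=1}^n x_i^{(n)}$, so by hypothesis $a_n \to 0$. For each $n$ I would pick a threshold $\delta_n > 0$ with $\delta_n \to 0$ and define
\[
K_n := \bigl\{ i \in \{1,\ldots,n\} : x_i^{(n)} \le \delta_n \bigr\}.
\]
By construction $\max_{i\in K_n} x_i^{(n)} \le \delta_n \to 0$, so that condition is immediate. For the other condition, Markov's inequality gives
\[
\frac{|K_n^c|}{n} \;=\; \frac{1}{n}\,\bigl|\{i : x_i^{(n)} > \delta_n\}\bigr|
\;\le\; \frac{1}{\delta_n n}\sum_{i=1}^n x_i^{(n)} \;=\; \frac{a_n}{\delta_n},
\]
and I just need to arrange $a_n/\delta_n \to 0$ while still having $\delta_n \to 0$.

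The simplest choice is $\delta_n := \max\{\sqrt{a_n},\, 1/n\}$. Then $\delta_n \to 0$ while $a_n/\delta_n \le \sqrt{a_n} \to 0$, so $|K_n|/n \to 1$. The $1/n$ floor also guarantees $K_n$ is nonempty: since at least one $x_i^{(n)}$ achieves the minimum and the minimum is $\le a_n \le \sqrt{a_n} \le \delta_n$ for all large $n$ (and for small $n$ where $a_n = 0$, all $x_i^{(n)} = 0$ and $K_n = \{1,\ldots,n\}$). Fixing $K_n := \{1,\ldots,n\}$ for the finitely many $n$ where this fails does not affect either limit.

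There is no real obstacle here; the only care required is making sure $\delta_n$ is chosen so that $K_n$ is nonempty for all $n$ and both limit conditions hold simultaneously, which the choice above accomplishes.
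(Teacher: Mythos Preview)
Your argument is correct and follows essentially the same approach as the paper: both define $K_n$ as the set of indices where $x_i^{(n)}$ is below a threshold $\delta_n$ (the paper calls it $\eps_n$), use Markov's inequality to bound $|K_n^c|/n$, and choose the threshold to tend to $0$ slowly enough. The only cosmetic difference is that you give an explicit formula $\delta_n = \max\{\sqrt{a_n},1/n\}$, whereas the paper invokes a diagonalization step (``we can take positive $\eps_1,\eps_2,\ldots$ with $\eps_n \to 0$ such that \ldots''), and the paper handles nonemptiness by simply replacing $K_n$ with $\{1\}$ when it is empty.
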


\begin{proof}
For each $\eps > 0$, we have
\[
\frac{|\{i : x_i^{(n)} > \eps\}|}{n} \le
\frac{1}{n\eps}\sum_{i=1}^n x_i^{(n)} \to 0.
\]
We can take positive $\eps_1,\eps_2,\ldots$
with $\eps_n \to 0$ such that
\[
\frac{|\{i : x_i^{(n)} > \eps_n\}|}{n} \to 0.
\]
Let $K_n$ be $\{i : x_i^{(n)} \le \eps_n\}$ if it is nonempty, and let $K_n := \{1\}$ otherwise.
\end{proof}

\begin{lemma}[uniform convergence] \label{lem:uniform}
Let $(E,d)$ be a metric space, $A_1, A_2, \ldots \subset E$, and $x \in E$.
Then the following are equivalent:
\begin{enumerate}
\item
$x_n \to x$ for any choice of $x_1\in A_1, x_2\in A_2,\ldots\,$.
\item
$\sup_{y\in A_n} d(x,y) \to 0$.
\end{enumerate}
\end{lemma}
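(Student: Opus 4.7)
The plan is to prove the two implications separately; both are essentially unwinding the definition of $\sup$.

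The direction (2) $\Rightarrow$ (1) is immediate: for any choice $x_n \in A_n$, we have the chain of inequalities
\[
0 \le d(x, x_n) \le \sup_{y \in A_n} d(x, y),
\]
so the hypothesis $\sup_{y\in A_n} d(x,y) \to 0$ forces $d(x, x_n) \to 0$, i.e., $x_n \to x$.

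For (1) $\Rightarrow$ (2), I would argue by contrapositive. Suppose $\sup_{y\in A_n} d(x,y) \not\to 0$. Then there exists $\eps > 0$ and an infinite subset $N \subset \N$ such that $\sup_{y\in A_n} d(x,y) > \eps$ for every $n \in N$. By the defining property of the supremum, for each $n \in N$ we can pick some $x_n \in A_n$ with $d(x, x_n) > \eps$. For $n \notin N$, pick any $x_n \in A_n$ (this is why we need the implicit assumption that each $A_n$ is nonempty; in the application in Section~\ref{sec:gauss} the sets $A_n$ are ranges of distribution functions, so this is automatic). The resulting sequence $(x_n)$ satisfies $x_n \in A_n$ for every $n$, but $d(x, x_n) > \eps$ infinitely often, so $x_n \not\to x$. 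This contradicts (1).

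The proof is essentially a one-liner in each direction, so I do not anticipate any real obstacle; the only subtle point is the hidden nonemptiness assumption on $A_n$, which should either be stated explicitly or verified in every application of the lemma.
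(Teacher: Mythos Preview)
Your proof is correct and is exactly the standard argument; the paper itself omits the proof entirely (``We omit the easy proof''), so there is nothing to compare. Your remark about the implicit nonemptiness assumption on the $A_n$ is a genuine observation the paper glosses over.
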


\begin{proof}
We omit the easy proof.
\end{proof}

By \eqref{eq:weak_lind} and Lemma \ref{lem:average}, for each $\eps>0$ we have
nonempty $K_n^\eps \subset \{1,\ldots,n\}$ with
\[ \frac{|K_n^\eps|}{n} \to 1 \qquad \text{and} \qquad
\max_{i\in K_n^\eps} \sum_{j=1}^n \P(|\w| > \eps) \to 0. \]
We can take $\eps_1 \ge \eps_2 \ge \cdots$ with
$\eps_n \to 0$ such that
\begin{equation} \label{eq:almost_null}
\frac{|K_n^{\eps_n}|}{n} \to 1 \qquad \text{and} \qquad
 \max_{i\in K_n^{\eps_n}} \sum_{j=1}^n \P(|\w| > \eps_n) \to 0.
\end{equation}

First assume \eqref{eq:weak_row_one}.
By Lemma \ref{lem:average}, we have nonempty $K_n \subset \{1,\ldots,n\}$
such that
\[ \frac{|K_n|}{n} \to 1 \qquad \text{and} \qquad
\max_{i\in K_n} \Bigl| \sum_{j=1}^n \E[|\w|^2;|\w|\le1] - 1 \Bigr| \to 0. \]
By \eqref{eq:almost_null}, we can make $K_n$ smaller so that
\[ \max_{i\in K_n} \sum_{j=1}^n \P(|\w|>\eps) \to 0 \]
also holds while retaining $|K_n|/n \to 1$.

Let $i_n \in K_n$ for each $n \in \N$.
By Lemma \ref{lem:uniform}, we have
\[ \sum_{j=1}^n \E[|w_{i_nj}|^2;|w_{i_nj}|\le1] \to 1
\qquad\text{and}\qquad \sum_{j=1}^n \P(|w_{i_nj}|>\eps_n) \to 0. \]
Since
\[ \sum_{j=1}^n \E[|w_{i_nj}|^2; \eps < |w_{i_nj}| \le 1]
\le \sum_{j=1}^n \P(|w_{i_nj}| > \eps) \to 0 \]
for all $\eps > 0$, the Lindeberg--Feller central limit theorem
\cite[Theorem 5.12]{Kal02} implies
\[ \sum_{j=1}^n \pm |w_{i_nj}|\I\{|w_{i_nj}|\le1\} \tto Z \]
where $Z$ is standard normal.
As $\sum_{j=1}^n \P(|w_{i_nj}| > 1) \to 0$, it follows that
$\sum_{j=1}^n \pm |w_{i_nj}| \tto Z$.

Since $i_1,i_2,\ldots$ are arbitrary, Lemma \ref{lem:uniform} implies
\[ \max_{i\in K_n} L(F_{ni_n},G) \to 0. \]
As the L\'evy distance is bounded above by $1$, we conclude that
\[ \frac{1}{n}\sum_{i=1}^n L(F_{ni},G)
\le \max_{i\in K_n} L(F_{ni_n},G) + \frac{n-|K_n|}{n} \to 0. \]

Now we assume \eqref{eq:row_gauss}.
By Lemma \ref{lem:average} and \eqref{eq:almost_null}, we have
nonempty $K_n \subset \{1,\ldots,n\}$ with $|K_n|/n \to 1$,
\[ \max_{i\in K_n} L(F_{ni},G) \to 0, \qquad \text{and} \qquad
\max_{i\in K_n} \sum_{j=1}^n \P(|\w| > \eps_n) \to 0. \]
Let $i_n \in K_n$ for each $n \in \N$.
By Lemma \ref{lem:uniform}, we have
\[ \sum_{j=1}^n \pm|w_{i_nj}| \tto Z \qquad\text{and}\qquad
\sum_{j=1}^n\P(|w_{i_nj}|>\eps_n) \to 0. \]
Since $\sum_{j=1}^n \P(|w_{i_nj}| > 1) \to 0$, we have
\begin{equation} \label{eq:trun_gauss}
\sum_{j=1}^n \pm|w_{i_nj}|\I\{|w_{i_nj}|\le1\} \tto Z.
\end{equation}

Let
\[ c_n := \sum_{j=1}^n \E[|w_{i_nj}|^2;|w_{i_nj}|\le1]. \]
If $c_n \to 0$ along some subsequence, then
\[ \E\Bigl(\sum_{j=1}^n \pm|w_{i_nj}|\I\{|w_{i_nj}|\le1\}\Bigr)^2 \to 0 \]
along that subsequence, but it contradicts \eqref{eq:trun_gauss}.
If $c_n \to c \in (0,\infty]$ along some subsequence, then
\[ \frac{1}{\sqrt{c_n}}\sum_{j=1}^n \pm|w_{i_nj}|\I\{|w_{i_nj}|\le1\} \tto Z \]
along that subsequence by the Lindeberg--Feller central limit theorem,
and so $c = 1$.
Thus, we have $c_n \to 1$.

Since $i_1,i_2,\ldots$ are arbitrary, Lemma \ref{lem:uniform}
and \eqref{eq:weak_margin} imply
\[
\begin{split}
\frac{1}{n} \sum_{i=1}^n \Bigl|\sum_{j=1}^n \E[|\w|^2&;|\w|\le1]-1 \Bigr| \\
&\le \max_{i\in K_n} \Bigl|\sum_{j=1}^n \E[|\w|^2;|\w|\le1]-1 \Bigr| \\
&\quad + \frac{1}{n} \sum_{i\in K_n^c} \sum_{j=1}^n \E[|\w|^2;|\w|\le1]
+ \frac{n-|K_n|}{n} \to 0.
\end{split}
\]

\appendix
\section{Mean probability measures} \label{app:exp_meas}

In this section, we clarify what we mean by $\E\mu_{W_n}$, and prove that
$\E\mu_{W_n} \tto \musc$ is equivalent to $\mu_{W_n} \tto \musc$ a.s.\
if $(W_n)_{n\in\N}$ is a Hermitian Wigner ensemble.

Let $\Pr(\R)$ be the set of all Borel probability measures on $\R$.
Equip $\Pr(\R)$ with the smallest $\sigma$-field that makes
$\Pr(\R) \ni \mu \mapsto \mu(-\infty,x]$ measurable for all $x \in \R$.

For any random element $\mu$ of $\Pr(\R)$, it is straightforward to show that
$\R \ni x \mapsto \E\mu(-\infty,x]$ is a distribution function
of some Borel probability measure on $\R$.
Let $\E\mu$ denote that measure. Then $\E\mu$ has the following property:

\begin{lemma}[change of order] \label{lem:exp_change}
Let $\mu$ be a random element of $\Pr(\R)$, and $f \colon \R \to \R$
be (Borel) measurable.
\begin{enumerate}
\item
If $f$ is nonnegative, then $\int_\R f\,d\mu$ is measurable, and we have
\begin{equation} \label{eq:exp_meas_id}
\int_\R f\,d\E\mu = \E\biggl[\int_\R f\,d\mu\biggr].
\end{equation}
\item
If $\int_\R |f| \,\E\mu < \infty$, then $\int_\R f\,d\mu$
is a.s.\ finite and measurable, and we have \eqref{eq:exp_meas_id}.
\end{enumerate}
\end{lemma}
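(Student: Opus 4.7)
The plan is to prove this by the standard measure-theoretic ladder: indicators of half-lines, then indicators of Borel sets via a Dynkin system argument, then nonnegative simple functions by linearity, then nonnegative measurable functions by monotone convergence, and finally signed integrable functions by splitting into positive and negative parts.

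First I would handle the base case. For $f = \I_{(-\infty,x]}$, the map $\mu \mapsto \mu(-\infty,x] = \int f\,d\mu$ is measurable by the very definition of the $\sigma$-field on $\Pr(\R)$, and the identity $\int f\,d\E\mu = \E[\int f\,d\mu]$ is exactly the defining property of $\E\mu$ (its distribution function is $x\mapsto \E\mu(-\infty,x]$). Next, let $\cD$ be the collection of all Borel $A\subset\R$ such that $\mu\mapsto\mu(A)$ is measurable and $\E\mu(A)=\E[\mu(A)]$. The collection $\cD$ contains the $\pi$-system $\{(-\infty,x]:x\in\R\}$, contains $\R$, is closed under proper differences (by subtraction of finite measurable functions), and is closed under countable increasing unions (by $\sigma$-additivity of each $\mu$ together with the monotone convergence theorem applied to $\E$). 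Hence $\cD$ is a Dynkin system containing a $\pi$-system that generates the Borel $\sigma$-algebra, and Dynkin's $\pi$-$\lambda$ theorem gives that $\cD$ is all Borel sets.

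Having established the conclusion for indicator functions of Borel sets, I would extend it to nonnegative simple functions by linearity of the integral and of expectation. For a general nonnegative measurable $f$, choose an increasing sequence $f_k$ of nonnegative simple functions with $f_k\uparrow f$ pointwise. For each fixed $\omega$, monotone convergence gives $\int f_k\,d\mu(\omega)\uparrow\int f\,d\mu(\omega)$, so $\mu\mapsto\int f\,d\mu$ is measurable as a pointwise limit of measurable functions. Applying monotone convergence a second time, first under $\E$ and separately against $\E\mu$, yields
\[
\E\Bigl[\int_\R f\,d\mu\Bigr]=\lim_k\E\Bigl[\int_\R f_k\,d\mu\Bigr]=\lim_k\int_\R f_k\,d\E\mu=\int_\R f\,d\E\mu,
\]
which is part (1).

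For part (2), apply (1) to $|f|$ to get $\E[\int|f|\,d\mu]=\int|f|\,d\E\mu<\infty$, so $\int|f|\,d\mu<\infty$ almost surely, which gives both a.s.\ finiteness and absolute convergence of the signed integral on that full-measure event. Writing $f=f^+-f^-$ with $f^\pm$ nonnegative and measurable, applying part (1) to each piece, and subtracting (the expectations and the $\E\mu$-integrals are both finite by absolute integrability), gives the identity \eqref{eq:exp_meas_id}. Measurability of $\int f\,d\mu$ on the a.s.\ set where both $\int f^\pm\,d\mu$ are finite follows from measurability of the two pieces established in part (1). I do not anticipate a serious obstacle; the only mild subtlety is to phrase the Dynkin-system step so that both the measurability statement and the identity are carried through the closure operations in parallel, which is handled cleanly by the description of $\cD$ above.
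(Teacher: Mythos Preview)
Your proposal is correct and follows essentially the same route as the paper: verify the identity for indicators of half-lines by definition, extend to all Borel sets via Dynkin's $\pi$-$\lambda$ theorem, then to nonnegative measurable functions by simple-function approximation and monotone convergence, and finally deduce (2) from (1) by splitting $f=f^+-f^-$. The paper's proof is terser, but the argument is the same.
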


\begin{proof}
Since (2) follows immediately from (1), we will prove (1) only.
As the statement of (1) holds for $f=1_{(-\infty,x]}$ for all $x \in \R$,
Dynkin's $\pi$-$\lambda$ theorem implies that the statement holds for
all measurable $A \subset \R$.
By the simple function approximation argument, the statement extends
to all nonnegative measurable $f$.
\end{proof}

In order to talk about $\E\mu_{W_n}$, we first need to establish
the measurability of $\mu_{W_n}$ for each $x \in \R$.

\begin{lemma}[measurability]
If $W$ is a random $n\times n$ Hermitian matrix, then $\mu_W$ is measurable.
\end{lemma}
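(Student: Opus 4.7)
The plan is to unpack the definition of the $\sigma$-field on $\Pr(\R)$ and reduce the measurability of $\mu_W$ as a $\Pr(\R)$-valued map to the measurability of a real-valued random variable, which can then be handled by standard continuity arguments for eigenvalues.

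By the definition of the $\sigma$-field on $\Pr(\R)$, it suffices to show that for each fixed $x\in\R$, the map $\omega\mapsto\mu_{W(\omega)}((-\infty,x])$ is measurable. Since $\mu_W=\frac{1}{n}\sum_{i=1}^n \delta_{\lambda_i(W)}$, we have
\[
\mu_W((-\infty,x]) = \frac{1}{n}\#\{i : \lambda_i(W)\le x\} = \frac{1}{n}\sum_{i=1}^n \I\{\lambda_i(W)\le x\},
\]
so it is enough to prove that each $\omega\mapsto\lambda_i(W(\omega))$ is measurable as a real-valued random variable.

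The key fact I would invoke is the classical one that, for Hermitian matrices, each ordered eigenvalue $\lambda_i\colon M\mapsto \lambda_i(M)$ (viewed as a function from the space of $n\times n$ Hermitian matrices to $\R$) is continuous; indeed, it is Lipschitz in the operator norm by the Weyl inequalities. Hence $\lambda_i$ is Borel measurable as a function on the Hermitian matrices. Since the entries of $W$ are measurable by assumption (so $W$ is a measurable map into the Hermitian matrices), the composition $\omega\mapsto\lambda_i(W(\omega))$ is measurable.

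Putting the pieces together, for each $x\in\R$ the sets $\{\omega : \lambda_i(W(\omega))\le x\}$ are measurable, so their indicators sum to a measurable function, giving measurability of $\omega\mapsto\mu_{W(\omega)}((-\infty,x])$. This holds for every $x\in\R$, so $\mu_W$ is measurable as a $\Pr(\R)$-valued map. No real obstacle is anticipated; the only mild subtlety is remembering to invoke continuity of the ordered eigenvalues (rather than continuity of unordered roots of the characteristic polynomial, which requires a choice of ordering).
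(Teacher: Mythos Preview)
Your proof is correct, and in fact cleaner than the paper's. Both arguments make the same first reduction: to show $\mu_W$ is measurable it suffices to show that each $\{\lambda_j(W)\le x\}$ is a measurable event. You then invoke the continuity (indeed Lipschitz continuity, via Weyl's inequalities) of the ordered eigenvalue maps $M\mapsto\lambda_j(M)$ on Hermitian matrices, so the composition with the measurable matrix-valued map $\omega\mapsto W(\omega)$ is measurable. The paper instead argues from scratch with the characteristic polynomial $f(x)=\det(xI-W)$: it observes that the event of having an eigenvalue of multiplicity at least $k$ in an interval $[a,b]$ can be written as
\[
\Bigl\{\inf_{q\in[a,b]\cap\Q}\bigl(f(q)^2+f'(q)^2+\cdots+f^{(k-1)}(q)^2\bigr)=0\Bigr\},
\]
a countable infimum of polynomials in the entries of $W$, and then deduces measurability of $\{\lambda_j(W)\le x\}$ by partitioning $\R$ into small intervals. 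Your route is shorter and more conceptual because it borrows the continuity of ordered eigenvalues as a black box; the paper's route is more self-contained, avoiding any appeal to perturbation theory. Either is perfectly adequate here.
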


\begin{proof}
Let $f(x) := \det(xI-W)$.
Given an interval $[a,b]$ where $a < b$, the event of having an eigenvalue
of $W$ with multiplicity at least $k$ (where $k\le n$) in $[a,b]$
is equal to
\[
\Bigl\{ \inf_{q \in [a,b] \cap \Q}
\bigl((f(q))^2+(f'(q))^2+\cdots+(f^{(k-1)}(q))^2\bigr) = 0 \Bigr\},
\]
which is indeed measurable.
Using this and by partitioning $\R$ into many small intervals,
one can show that $\{\lambda_j(W) \le x\}$ is measurable
for each $j=1,\ldots,n$ and $x \in \R$.
Since
\[ \mu_W(-\infty,x] = \frac{1}{n}\sum_{j=1}^n \I\{\lambda_j(W) \le x\}, \]
$\mu_W$ is measurable.
\end{proof}

Now we turn to the proof of Lemma \ref{lem:as_reduct}.
We need the following inequality,
which was found independently by Guntuboyina and Leeb \cite{GL09},
and Bordenave, Caputo, and Chafa\"i \cite{BCC11}.

\begin{lemma}[concentration for spectral measures] \label{lem:concen_spec}
Let $(W_n)_{n\in\N}$ be a Hermitian Wigner ensemble.
If the total variation of $f \colon \R \to \R$ is less than or equal to $1$,
then
\[
\P\biggl( \Bigl| \int_\R f\,d\mu_{W_n} - \E\int_\R f\,d\mu_{W_n} \Bigr| \ge t
\biggr) \le 2\exp(-nt^2/2).
\]
\end{lemma}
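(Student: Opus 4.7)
The plan is to apply McDiarmid's bounded differences inequality to $Z := \int_\R f\,d\mu_{W_n}$, viewed as a function of the $n$ independent random vectors $X_1,\dots,X_n$, where $X_k := (w_{kj})_{j=k}^n$ collects the upper-triangular entries in the $k$-th row of $W_n$. These vectors are independent by hypothesis, and together with the Hermitian constraint they determine $W_n$.

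For the bounded-differences estimate, replace $X_k$ by an independent copy $X_k'$, producing a new Hermitian matrix $W_n'$, and let $Z' := \int_\R f\,d\mu_{W_n'}$. Since $W_n - W_n'$ has nonzero entries only in the $k$-th row and the $k$-th column, $\rank(W_n - W_n') \le 2$, so Lemma~\ref{lem:rank_ineq} gives $\sup_x|F_{W_n}(x) - F_{W_n'}(x)| \le 2/n$. Writing $f = f_+ - f_-$ with $f_\pm$ bounded and nondecreasing and applying Lebesgue--Stieltjes integration by parts yields
\[
|Z - Z'|
= \Bigl|\int_\R (F_{W_n} - F_{W_n'})\,df\Bigr|
\le \sup_x \bigl|F_{W_n}(x) - F_{W_n'}(x)\bigr|
\le \frac{2}{n},
\]
using that the total variation of $f$ is at most $1$.

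Taking $c_k = 2/n$ for each $k$ gives $\sum_{k=1}^n c_k^2 = 4/n$, and McDiarmid's inequality
\[
\P\bigl(|Z - \E Z| \ge t\bigr)
\le 2\exp\!\Bigl(-\frac{2t^2}{\sum_{k=1}^n c_k^2}\Bigr)
\]
specializes exactly to the claimed $2\exp(-nt^2/2)$. The only substantive computation is the rank-$2$ estimate combined with the integration-by-parts identity; McDiarmid's inequality itself is a standard Azuma-type argument on the Doob martingale adapted to $X_1,\dots,X_n$, and extends verbatim to vector-valued inputs. The only thing to watch is that one must invoke the sharpened McDiarmid form (which bounds the conditional \emph{range} of each martingale difference by $2/n$) rather than a naive Azuma bound based on $|D_k| \le 2/n$, as the latter would lose a factor of $4$ in the exponent and only produce $2\exp(-nt^2/8)$.
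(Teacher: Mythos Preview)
Your argument is correct. The paper does not give an independent proof of this lemma at all; it simply cites \cite[Lemma~C.2]{BCC11} (and mentions \cite{GL09} as an independent discovery). The proof you have written is precisely the argument that appears in those references: group the independent upper-triangular entries row by row, use the rank inequality to show that resampling one row perturbs the empirical CDF by at most $2/n$ in sup norm, pass this to $\int f\,d\mu$ via Lebesgue--Stieltjes integration by parts using $\mathrm{TV}(f)\le 1$, and then invoke McDiarmid's bounded-differences inequality with $c_k=2/n$. Your bookkeeping is right, including the observation that one needs the McDiarmid form of the martingale inequality (conditional range $\le c_k$) rather than the cruder Azuma bound in order to land exactly on the exponent $-nt^2/2$.

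One small point worth making explicit in your write-up: the integration-by-parts step $\int f\,d(\mu_{W_n}-\mu_{W_n'}) = -\int (F_{W_n}-F_{W_n'})\,df$ is unproblematic here because $F_{W_n}-F_{W_n'}$ has compact support (both CDFs are step functions agreeing outside a bounded interval), so no issue arises from the behavior of $f$ at $\pm\infty$.
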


\begin{proof}
See \cite[Lemma C.2]{BCC11}.
\end{proof}

\begin{proof}[Proof of Lemma \ref{lem:as_reduct}]
Assume $\E \mu_{W_n} \tto \musc$.
For each $p,q \in \Q$ with $p < q$, let $f_{p,q} \colon \R \to \R$
be $1$ on $(-\infty,p]$, $0$ on $[q,\infty)$, and linear and continuous
on $[p,q]$.
Then
\[ \E\biggl[\int_\R f_{p,q} \,d\mu_{W_n}\biggr] \to
\int_\R f_{p,q} \,d\musc. \qquad \text{for all rational $p<q$.} \]
By Lemma \ref{lem:concen_spec} and the Borel-Cantelli lemma,
\[ \int_\R f_{p,q} \,d\mu_{W_n} \to \int_\R f_{p,q} \,d\musc
\qquad \text{for all rational $p<q$, a.s.} \]
This proves $\mu_{W_n} \tto \musc$ a.s.

To show the converse, assume that $\mu_{W_n} \tto \musc$ a.s.,
and let $f \colon \R \to \R$ be continuous and bounded.
Since $f$ is bounded, we can apply the dominated convergence theorem
to obtain
\[
\E\biggl[\int_\R f\,d\mu_{W_n}\biggr] \to \int_\R f\,d\musc.
\]
This shows $\E\mu_{W_n} \tto \musc$.
\end{proof}

\section{Reductions} \label{app:reduct}

In this section, we prove that Theorem~\ref{thm:wigner_var} follows from Theorem~\ref{thm:wigner} (Lemma~\ref{lem:var_reduct}),
and that Theorem~\ref{thm:wigner} follows from Theorem~\ref{thm:red_wigner}
(Lemma~\ref{lem:reduct}).

\begin{lemma} \label{lem:var_reduct}
Theorem \ref{thm:wigner_var} follows from Theorem \ref{thm:wigner}.
\end{lemma}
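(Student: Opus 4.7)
The plan is to show that the hypotheses of Theorem~\ref{thm:wigner_var} imply those of Theorem~\ref{thm:wigner}, apply the latter to obtain the equivalence with \eqref{eq:weak_row_one}, and then show that \eqref{eq:weak_row_one} is equivalent to \eqref{eq:row_one} under the Lindeberg condition.

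For the first step, the paper already observes that $\E\w=0$ together with the Lindeberg condition \eqref{eq:lind} implies \eqref{eq:weak_zero}, and that \eqref{eq:lind} implies \eqref{eq:weak_lind}. What remains is to verify \eqref{eq:weak_margin}, which is immediate from \eqref{eq:margin} together with the obvious bound $\E[|\w|^2;|\w|\le1]\le\E|\w|^2$. Hence Theorem~\ref{thm:wigner} applies, giving us
\[ \E\mu_{W_n}\tto\musc \quad \iff \quad \frac{1}{n}\sum_{i=1}^n\Bigl|\sum_{j=1}^n\E[|\w|^2;|\w|\le1]-1\Bigr|\to 0. \]

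For the second step, I would relate the two row-sum quantities through the truncation error
\[ r_{ni} := \sum_{j=1}^n \E[|\w|^2;|\w|>1] = \sum_{j=1}^n\E|\w|^2 - \sum_{j=1}^n\E[|\w|^2;|\w|\le1], \]
which is nonnegative. The triangle inequality gives in both directions
\[ \Bigl|\sum_{j=1}^n \E|\w|^2 - 1\Bigr| \le \Bigl|\sum_{j=1}^n \E[|\w|^2;|\w|\le1] - 1\Bigr| + r_{ni}, \]
and similarly with the roles of the two quantities swapped. Averaging over $i$, the two conditions \eqref{eq:row_one} and \eqref{eq:weak_row_one} differ by at most $\frac{1}{n}\sum_{i=1}^n r_{ni} = \frac{1}{n}\sum_{i,j=1}^n \E[|\w|^2;|\w|>1]$, which tends to zero by the Lindeberg condition \eqref{eq:lind} applied with $\eps=1$. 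Therefore \eqref{eq:weak_row_one} and \eqref{eq:row_one} are equivalent, completing the reduction.

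There is no real obstacle here; the only small care needed is to note that \eqref{eq:weak_margin} is genuinely weaker than \eqref{eq:margin} under the finite-variance assumption, and that the Lindeberg condition appears exactly at the cutoff $\eps=1$ used in the truncation in Theorem~\ref{thm:wigner}.
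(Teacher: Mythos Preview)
Your argument is correct and follows essentially the same route as the paper: the paper packages the equivalence of \eqref{eq:row_one} with \eqref{eq:weak_row_one} (and of \eqref{eq:margin} with \eqref{eq:weak_margin}) under \eqref{eq:lind} into a separate Lemma~\ref{lem:under_lind}, using the same triangle-inequality bound by $\frac{1}{n}\sum_{i,j}\E[|\w|^2;|\w|>1]\to0$. Your derivation of \eqref{eq:weak_margin} directly from monotonicity $\E[|\w|^2;|\w|\le1]\le\E|\w|^2$ is a slight shortcut over the paper's two-sided Lindeberg-based comparison, but the overall structure is the same.
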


To prove this, we use the following two lemmas.

\begin{lemma}[perturbation inequality] \label{lem:pert_ineq}
If $A$ and $B$ are $n \times n$ Hermitian matrices, and
$F_A$ and $F_B$ are the distribution functions of $\mu_A$ and $\mu_B$, then
\[ \bigl(L(F_A,F_B)\bigr)^3 \le \tr\bigl((A-B)^2\bigr) \]
where $L$ is the L\'evy metric.
\end{lemma}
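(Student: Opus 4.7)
The plan is to combine two standard ingredients: the Hoffman--Wielandt inequality for Hermitian matrices, which bounds the sum of squared sorted eigenvalue differences by $\tr((A-B)^2)$, and a short combinatorial observation translating the L\'evy distance into a lower bound on that same quantity. Throughout I arrange the eigenvalues in increasing order, writing $\lambda_1(A) \le \cdots \le \lambda_n(A)$ and $\lambda_1(B) \le \cdots \le \lambda_n(B)$.

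First I would invoke Hoffman--Wielandt: for Hermitian $A$ and $B$, the minimum of $\sum_i (\lambda_i(A) - \lambda_{\pi(i)}(B))^2$ over permutations $\pi$ is attained at the identity and is bounded above by $\|A-B\|_F^2 = \tr((A-B)^2)$, since $A-B$ is Hermitian. This yields
\[ \sum_{i=1}^n \bigl(\lambda_i(A) - \lambda_i(B)\bigr)^2 \le \tr\bigl((A-B)^2\bigr). \]

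Next I set $d := L(F_A, F_B)$ and fix an arbitrary $d' \in (0,d)$. By the definition of the L\'evy metric the defining inequality fails at some $x \in \R$, so after possibly swapping the roles of $A$ and $B$ I may assume $F_B(x) > F_A(x+d') + d'$. Writing $k := nF_B(x)$ for the number of indices $i$ with $\lambda_i(B) \le x$ and $\ell := nF_A(x+d')$ for the number with $\lambda_i(A) \le x+d'$, this rearranges to the integer inequality $k - \ell > nd'$. Because the eigenvalues are sorted in increasing order, each index $i \in \{\ell+1,\ldots,k\}$ satisfies $\lambda_i(A) > x + d'$ and $\lambda_i(B) \le x$, whence $\lambda_i(A) - \lambda_i(B) > d'$. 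Summing the squares over these strictly more than $nd'$ indices gives
\[ \sum_{i=1}^n \bigl(\lambda_i(A) - \lambda_i(B)\bigr)^2 \;>\; (k-\ell)(d')^2 \;>\; n(d')^3. \]

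Combining with Hoffman--Wielandt yields $n(d')^3 \le \tr((A-B)^2)$, and since $n \ge 1$, letting $d' \nearrow d$ gives $d^3 \le \tr((A-B)^2)$, as required. The main delicacy is juggling the strict versus non-strict inequalities built into the definition of $L$, which is handled cleanly by approaching $d$ strictly from below; the case where the failing direction is $F_A > F_B + \cdots$ rather than $F_B > F_A + \cdots$ is symmetric. I expect no significant obstacle beyond citing or briefly justifying Hoffman--Wielandt.
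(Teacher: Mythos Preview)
Your argument is correct. The paper does not supply its own proof of this lemma but simply cites \cite[Theorem~A.41]{BS10}; your route via Hoffman--Wielandt combined with the count of indices $i$ at which the sorted eigenvalues differ by more than $d'$ is the standard argument behind that reference. One remark: you in fact establish the sharper bound $n\bigl(L(F_A,F_B)\bigr)^3 \le \tr\bigl((A-B)^2\bigr)$ before discarding the factor $n$, and it is this sharper form (equivalently, $L^3 \le \frac{1}{n}\tr((A-B)^2)$) that the paper actually invokes in the proof of Lemma~\ref{lem:neg_reduct}, so you may as well record it.
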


\begin{proof}
See \cite[Theorem A.41]{BS10}.
\end{proof}

\begin{lemma} \label{lem:under_lind}
If \eqref{eq:lind} holds, then the following are true:
\begin{enumerate}
\item
\eqref{eq:margin} holds if and only if \eqref{eq:weak_margin} holds.
\item
\eqref{eq:row_one} holds if and only if \eqref{eq:weak_row_one} holds.
\end{enumerate}
\end{lemma}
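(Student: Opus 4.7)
The plan is to exploit the fact that the Lindeberg condition \eqref{eq:lind} applied with $\eps = 1$ gives
\[
\frac{1}{n}\sum_{i,j=1}^n\E[|\w|^2;|\w|>1] \to 0,
\]
so that replacing $\E|\w|^2$ by the truncated version $\E[|\w|^2;|\w|\le1]$ introduces only a negligible error at the aggregate level. Both parts then reduce to routine triangle-inequality bookkeeping.

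For part (1), I would write
\[
\frac{1}{n}\sum_{i\in J_n}\sum_{j=1}^n\E|\w|^2 = \frac{1}{n}\sum_{i\in J_n}\sum_{j=1}^n\E[|\w|^2;|\w|\le1] + \frac{1}{n}\sum_{i\in J_n}\sum_{j=1}^n\E[|\w|^2;|\w|>1],
\]
and observe that the last term is bounded above by $\frac{1}{n}\sum_{i,j=1}^n\E[|\w|^2;|\w|>1]$, which tends to $0$ by Lindeberg. Hence the two sides differ by a quantity that vanishes uniformly over all choices of $J_n$, and \eqref{eq:margin} and \eqref{eq:weak_margin} are equivalent.

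For part (2), the triangle inequality gives
\[
\Bigl|\,\Bigl|\sum_{j=1}^n\E|\w|^2 - 1\Bigr| - \Bigl|\sum_{j=1}^n\E[|\w|^2;|\w|\le1] - 1\Bigr|\,\Bigr| \le \sum_{j=1}^n\E[|\w|^2;|\w|>1].
\]
Summing over $i$, dividing by $n$, and invoking Lindeberg with $\eps=1$ again shows that the two averages appearing in \eqref{eq:row_one} and \eqref{eq:weak_row_one} differ by a vanishing quantity, so one goes to $0$ if and only if the other does.

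There is no real obstacle here; the only subtlety is choosing the truncation level to match exactly the cutoff $|\w|\le 1$ that appears in \eqref{eq:weak_margin} and \eqref{eq:weak_row_one}, which is precisely why we specialize Lindeberg to $\eps=1$. Everything else is an application of the triangle inequality and the decomposition $\E|\w|^2 = \E[|\w|^2;|\w|\le 1] + \E[|\w|^2;|\w|>1]$.
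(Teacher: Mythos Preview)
Your proof is correct and follows essentially the same approach as the paper: specialize the Lindeberg condition to $\eps=1$ to get $\frac{1}{n}\sum_{i,j}\E[|\w|^2;|\w|>1]\to 0$, then use this to bound the difference between the truncated and untruncated expressions in each part via the triangle inequality. The paper's argument is line-for-line the same.
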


\begin{proof}
From \eqref{eq:lind} it follows that
\[ \frac{1}{n} \sum_{i,j=1}^n \E[|\w|^2;|\w|>1] \to 0. \]

(1) For any $J_n\subset\{1,\ldots,n\}$ with $|J_n|/n \to 0$, we have
\begin{multline}
\biggl| \frac{1}{n}\sum_{i\in J_n}\sum_{j=1}^n \E|w_{ij}|^2 -
\frac{1}{n}\sum_{i\in J_n}\sum_{j=1}^n \E[|w_{ij}|^2;|w_{ij}|\le 1] \biggr|\\
\le \frac{1}{n} \sum_{i,j=1}^n \E[|\w|^2;|\w|>1] \to 0.
\end{multline}
Thus, 
\eqref{eq:margin} holds if and only if \eqref{eq:weak_margin} holds.

(2) Since
\begin{multline}
\biggl|
\frac{1}{n} \sum_{i=1}^n \Bigl| \sum_{j=1}^n \E|\w|^2 - 1 \Bigr|
- \frac{1}{n} \sum_{i=1}^n \Bigl|\sum_{j=1}^n \E[|\w|^2;|\w|\le1]-1 \Bigr|
\biggr| \\
\le \frac{1}{n} \sum_{i,j=1}^n \E[|\w|^2;|\w|>1] \to 0,
\end{multline}
\eqref{eq:row_one} holds if and only if \eqref{eq:weak_row_one} holds.
\end{proof}

\begin{proof}[Proof of Lemma \ref{lem:var_reduct}]
We first show the sufficiency direction of Theorem \ref{thm:wigner_var}.
Recall that we proved that \eqref{eq:weak_zero} and \eqref{eq:weak_lind}
follows from \eqref{eq:lind} and $\E w_{ij}=0$.
By Lemma~\ref{lem:under_lind}, we have \eqref{eq:weak_margin}
and \eqref{eq:weak_row_one}.
Thus, $\E\mu_{W_n} \tto \musc$ follows from Theorem~\ref{thm:wigner}.

Now let us show the necessity.
On top of \eqref{eq:weak_zero} and \eqref{eq:weak_lind}, we have
\eqref{eq:weak_margin} by Lemma~\ref{lem:under_lind}.
Thus, \eqref{eq:weak_row_one} follows from Theorem~\ref{thm:wigner},
and therefore we have \eqref{eq:row_one} by Lemma~\ref{lem:under_lind}.
\end{proof}

Next we reduce Theorem~\ref{thm:wigner} to Theorem~\ref{thm:red_wigner}.

\begin{lemma} \label{lem:reduct}
\begin{enumerate}
\item
The sufficiency part of Theorem~\ref{thm:red_wigner}
implies the sufficiency part of Theorem~\ref{thm:wigner}.
\item
The necessity part of Theorem~\ref{thm:red_wigner}
implies the necessity part of Theorem~\ref{thm:wigner}.
\end{enumerate}
\end{lemma}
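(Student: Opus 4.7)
The plan is a truncation-and-centering reduction. Let $(W_n)_{n\in\N}$ satisfy the hypotheses of Theorem~\ref{thm:wigner}. Using \eqref{eq:weak_lind} I would first choose a nonincreasing sequence $\eps_n\downarrow 0$ with $\eps_1\le 1$ such that $p_n := \frac{1}{n}\sum_{i,j=1}^n\P(|w_{ij}|>\eps_n)\to 0$. Set $m_{ij} := \E[w_{ij}\I\{|w_{ij}|\le\eps_n\}]$, $\tilde w_{ij} := w_{ij}\I\{|w_{ij}|\le\eps_n\} - m_{ij}$, and $\tilde W_n := (\tilde w_{ij})_{i,j=1}^n$. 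Since $w_{ji}=\overline{w_{ij}}$ forces $m_{ji}=\overline{m_{ij}}$, the matrix $\tilde W_n$ is Hermitian with independent upper triangular entries satisfying $\E\tilde w_{ij}=0$ and $|\tilde w_{ij}|\le 2\eps_n$, so $\tilde W_n$ obeys \eqref{eq:neg} after relabeling $2\eps_n$ as $\eps_n$.

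The next step is to transfer the row-sum conditions between $W_n$ and $\tilde W_n$. Using the pointwise bounds
\[ \E[|w_{ij}|^2;\eps_n<|w_{ij}|\le 1]\le\P(|w_{ij}|>\eps_n), \qquad |m_{ij}|^2\le 2(\E[w_{ij};|w_{ij}|\le 1])^2 + 2\P(|w_{ij}|>\eps_n), \]
together with \eqref{eq:weak_zero} and $p_n\to 0$, one obtains
\[ \frac{1}{n}\sum_{i,j=1}^n \bigl|\E|\tilde w_{ij}|^2 - \E[|w_{ij}|^2;|w_{ij}|\le 1]\bigr|\to 0. \]
The triangle inequality applied row by row then shows that \eqref{eq:weak_margin} for $W_n$ implies \eqref{eq:margin} for $\tilde W_n$, and that \eqref{eq:weak_row_one} for $W_n$ is equivalent to \eqref{eq:row_one} for $\tilde W_n$.

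To compare spectra I decompose $W_n - \tilde W_n = A_n + B_n$, where $A_n$ contains the ``large entries'' $w_{ij}\I\{|w_{ij}|>\eps_n\}$ and $B_n$ is the deterministic matrix with entries $m_{ij}$. Each nonzero entry of $A_n$ contributes at most one nonzero row, so $\rank(A_n)\le 2N_n$ with $N_n := \#\{i\le j : |w_{ij}|>\eps_n\}$; then $\E[\rank(A_n)/n]\le 2p_n\to 0$, so Lemma~\ref{lem:rank_ineq} (combined with Lemma~\ref{lem:exp_change}) gives $\sup_x|\E F_{W_n}(x) - \E F_{W_n-A_n}(x)|\to 0$. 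For $B_n$, the estimates above also yield $\frac{1}{n}\|B_n\|_F^2\to 0$; since $B_n$ is deterministic, Lemma~\ref{lem:pert_ineq} provides a deterministic bound $L(F_{W_n-A_n}(\omega), F_{\tilde W_n}(\omega))\to 0$ uniformly in $\omega$, and integrating the defining L\'evy inequalities transfers this to $L(\E\mu_{W_n-A_n},\E\mu_{\tilde W_n})\to 0$. Chaining these two estimates shows that $\E\mu_{W_n}\tto\musc$ is equivalent to $\E\mu_{\tilde W_n}\tto\musc$, so both directions of Lemma~\ref{lem:reduct} follow by applying Theorem~\ref{thm:red_wigner} to $\tilde W_n$ and translating back via the previous paragraph.

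The main obstacle is controlling the deterministic centering $B_n$: its entries are generically all nonzero, so no rank bound is available, and one must rely on \eqref{eq:weak_zero} to squeeze its Frobenius norm below $\sqrt{n}$. Without this hypothesis the means $m_{ij}$ could produce a nontrivial shift to the limiting spectrum. The remaining estimates are essentially bookkeeping once $\eps_n\downarrow 0$ is chosen slowly enough relative to \eqref{eq:weak_lind}.
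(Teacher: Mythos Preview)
Your argument is correct and follows the same truncation-and-centering scheme as the paper's proof via Lemma~\ref{lem:neg_reduct}: choose $\eps_n\downarrow 0$ slowly using \eqref{eq:weak_lind}, truncate at level $\eps_n$, recenter, and then compare spectra using the rank inequality for the truncated part and the perturbation inequality (Lemma~\ref{lem:pert_ineq}) for the deterministic centering matrix. The one notable difference is in handling the truncation error $A_n$: the paper passes to almost-sure convergence via Lemma~\ref{lem:as_reduct} and then uses Bernstein's inequality together with Borel--Cantelli to show $\rank(A_n)/n\to 0$ a.s., whereas you stay at the level of mean measures and simply use $\E[\rank(A_n)/n]\le 2p_n\to 0$ to bound $\sup_x|\E F_{W_n}(x)-\E F_{W_n-A_n}(x)|$. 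Your route is slightly more elementary here, since it bypasses the concentration step entirely; the paper's route, on the other hand, yields the stronger almost-sure statement along the way.
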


We need the following lemma.

\begin{lemma}[reduction to vanishing bounds] \label{lem:neg_reduct}
Let $(W_n)_{n\in\N}$ be a Hermitian Wigner ensemble satisfying
\eqref{eq:weak_zero} and \eqref{eq:weak_lind}.
Then there exist $1/2 \ge \eta_1 \ge \eta_2 \ge \cdots$ with $\eta_n \to 0$
such that if we let
\[ W_n' \equiv (\w')_{i,j=1}^n := \bigl(\w \I\{|\w|\le \eta_n\}
- \E[\w;|\w|\le\eta_n] \bigr)_{i,j=1}^n, \]
then the following are true:
\begin{enumerate}
\item
$\E\mu_{W_n} \tto \musc$ if and only if $\E\mu_{W_n'} \tto \musc$.
\item
\eqref{eq:weak_row_one} if and only if
\begin{equation} \label{eq:weak_row_one_prime}
\frac{1}{n} \sum_{i=1}^n \Bigl| \sum_{j=1}^n \E|\w'|^2 - 1 \Bigr| \to 0.
\end{equation}
\item
\eqref{eq:weak_margin} if and only if
\begin{equation} \label{eq:weak_margin_prime}
\frac{1}{n} \sum_{i\in J_n} \sum_{j=1}^n \E|\w'|^2 \to 0
\quad \text{for any $J_n\subset\{1,\ldots,n\}$ with $|J_n|/n\to0$.}
\end{equation}
\end{enumerate}
\end{lemma}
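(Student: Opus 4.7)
The plan is to choose $\eta_n\to 0$ slowly enough that $\frac{1}{n}\sum_{i,j=1}^n \P(|\w|>\eta_n) \to 0$. This is possible by a diagonal argument: for each fixed $k\in\N$, \eqref{eq:weak_lind} gives $\frac{1}{n}\sum_{i,j}\P(|\w|>1/k) \to 0$ as $n\to\infty$, so one picks $n_1<n_2<\cdots$ and sets $\eta_n := \min(1/k,1/2)$ for $n_k \le n < n_{k+1}$. Write $c_{ij}:=\E[\w;|\w|\le\eta_n]$, so that $W_n' = \tilde W_n - C_n$ where $\tilde W_n := (\w\I\{|\w|\le\eta_n\})_{i,j=1}^n$ and $C_n := (c_{ij})_{i,j=1}^n$.

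For part (1), I would pass from $W_n$ to $W_n'$ in two steps. First, $\rank(W_n - \tilde W_n)$ is at most $|\{(i,j) : |\w|>\eta_n\}|$, so $\frac{1}{n}\E\rank(W_n - \tilde W_n) \le \frac{1}{n}\sum_{i,j}\P(|\w|>\eta_n) \to 0$; applying Lemma~\ref{lem:rank_ineq} inside the expectation yields $\sup_x|\E\mu_{W_n}(-\infty,x]-\E\mu_{\tilde W_n}(-\infty,x]| \to 0$. Second, $C_n$ is deterministic, so Lemma~\ref{lem:pert_ineq} gives (after the usual normalization) the deterministic bound $L(\mu_{\tilde W_n},\mu_{W_n'})^3 \le \frac{1}{n}\sum_{i,j}|c_{ij}|^2$, which transfers directly to $L(\E\mu_{\tilde W_n},\E\mu_{W_n'})$. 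Writing $c_{ij} = \E[\w;|\w|\le 1] - \E[\w;\eta_n<|\w|\le 1]$ and using $|\E[\w;\eta_n<|\w|\le 1]| \le \P(|\w|>\eta_n)$ gives $|c_{ij}|^2 \le 2(\E[\w;|\w|\le 1])^2 + 2\P(|\w|>\eta_n)$, so $\frac{1}{n}\sum|c_{ij}|^2 \to 0$ by \eqref{eq:weak_zero} and the choice of $\eta_n$.

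For parts (2) and (3), the key estimate is entrywise. Expanding $\E|\w'|^2 = \E[|\w|^2;|\w|\le\eta_n] - |c_{ij}|^2$, we have
\[
\bigl|\E|\w'|^2 - \E[|\w|^2;|\w|\le 1]\bigr| \le \E[|\w|^2;\eta_n<|\w|\le 1] + |c_{ij}|^2 \le \P(|\w|>\eta_n) + |c_{ij}|^2,
\]
and summing over $j$ and over $i\in\{1,\ldots,n\}$ for (2), or over $i\in J_n$ for (3), and dividing by $n$ yields something tending to $0$ by the previous paragraph. A single triangle inequality then delivers the equivalences \eqref{eq:weak_row_one}$\Leftrightarrow$\eqref{eq:weak_row_one_prime} and \eqref{eq:weak_margin}$\Leftrightarrow$\eqref{eq:weak_margin_prime}.

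The main obstacle is the opening step, namely producing one sequence $\eta_n\to 0$ along which both the probability tails $\frac{1}{n}\sum_{i,j}\P(|\w|>\eta_n)$ and (through the chain above) the average centering norms $\frac{1}{n}\sum|c_{ij}|^2$ vanish. Once that diagonal choice is made, each of the three conclusions reduces to entrywise bookkeeping driven entirely by the two hypotheses \eqref{eq:weak_zero} and \eqref{eq:weak_lind}.
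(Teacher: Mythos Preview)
Your proof is correct, and for parts (2) and (3) it is essentially identical to the paper's. The interesting divergence is in part~(1). The paper first passes through the equivalence $\E\mu_{W_n}\tto\musc \Leftrightarrow \mu_{W_n}\tto\musc$ a.s.\ (Lemma~\ref{lem:as_reduct}) and then works at the almost-sure level: to show $\tfrac{1}{n}\rank(W_n-\tilde W_n)\to 0$ a.s., it applies Bernstein's inequality (Lemma~\ref{lem:bernstein_ineq}) together with Borel--Cantelli. You instead take expectations directly inside the rank inequality, obtaining
\[
\sup_x\bigl|\E\mu_{W_n}(-\infty,x]-\E\mu_{\tilde W_n}(-\infty,x]\bigr|
\le \tfrac{1}{n}\,\E\rank(W_n-\tilde W_n)
\le \tfrac{1}{n}\sum_{i,j}\P(|\w|>\eta_n)\to 0,
\]
which is strictly more elementary: it bypasses both the concentration argument and the appeal to Lemma~\ref{lem:as_reduct}. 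The paper's route does yield the marginally stronger almost-sure equivalence, but the lemma only asks for equivalence of the expected measures, so your shortcut suffices. Your transfer of the deterministic L\'evy bound from $(\mu_{\tilde W_n},\mu_{W_n'})$ to $(\E\mu_{\tilde W_n},\E\mu_{W_n'})$ is also legitimate: since $C_n$ is nonrandom, the pointwise CDF inequalities defining $L\le\delta_n$ survive under expectation. The only cosmetic difference in the estimates is that you bound $|c_{ij}|^2$ via $|a-b|^2\le 2|a|^2+2|b|^2$, whereas the paper uses $|a^2-b^2|=|a-b|\,|a+b|$ with $|a+b|\le 2$; these are interchangeable here.
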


The proof of (1) will be based on the following lemma.

\begin{lemma}[Bernstein's inequality] \label{lem:bernstein_ineq}
Suppose that $X_1,\ldots,X_n$ are independent real-valued random variables
with $|X_i| \le 1$ and $\E X_i = 0$ for $i=1,\ldots,n$.
If $S := X_1 + \cdots +X_n$, then
\[ \P(S \ge x) \le \exp\biggl(\frac{-x^2}{2(\E S^2 + x)}\biggr)
\qquad \text{for all $x>0$.} \]
\end{lemma}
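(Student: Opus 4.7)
The plan is to apply the Chernoff--Cram\'er exponential method. For any $\lambda > 0$, Markov's inequality together with independence gives
\[ \P(S \ge x) \le e^{-\lambda x}\,\E e^{\lambda S} = e^{-\lambda x}\prod_{i=1}^n \E e^{\lambda X_i}, \]
so the task reduces to controlling each moment generating factor and then optimizing the exponent.

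The first key step is the per-variable bound. Since $|X_i| \le 1$, we have $|X_i|^k \le X_i^2$ for every $k \ge 2$, and using $\E X_i = 0$ in the Taylor expansion of $e^{\lambda X_i}$ yields
\[ \E e^{\lambda X_i} = 1 + \sum_{k \ge 2}\frac{\lambda^k \E X_i^k}{k!} \le 1 + \E X_i^2\,(e^\lambda - 1 - \lambda) \le \exp\!\bigl(\E X_i^2(e^\lambda - 1 - \lambda)\bigr), \]
where the last step is $1+y \le e^y$. Writing $\sigma^2 := \E S^2 = \sum_i \E X_i^2$ (the independence and centering give equality), taking the product and combining with the displayed Chernoff bound produces
\[ \P(S \ge x) \le \exp\!\bigl(-\lambda x + \sigma^2(e^\lambda - 1 - \lambda)\bigr). \]

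Next I would optimize in $\lambda$. The minimizer is $\lambda^{\ast} := \log(1 + x/\sigma^2)$, which is positive since $x > 0$ (the case $\sigma^2 = 0$ is trivial because then $S = 0$ a.s.). Substituting collapses the right hand side to $\exp(-\sigma^2 h(x/\sigma^2))$ with Bennett's function $h(u) := (1+u)\log(1+u) - u$. To land on the form stated in the lemma it then suffices to prove the elementary inequality
\[ h(u) \ge \frac{u^2}{2(1+u)} \qquad (u \ge 0), \]
after which $-\sigma^2 h(x/\sigma^2) \le -x^2/(2(\sigma^2 + x))$ and the conclusion follows. Setting $\phi(u) := 2(1+u)h(u) - u^2 = 2(1+u)^2\log(1+u) - 2u(1+u) - u^2$, a short computation yields $\phi(0) = \phi'(0) = 0$ and $\phi''(u) = 4\log(1+u) \ge 0$, whence $\phi \ge 0$ on $[0,\infty)$.

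The main obstacle is bookkeeping in the last two steps rather than any real difficulty: the exponential moment bound and the Chernoff optimization are textbook, but one has to choose precisely $\lambda^{\ast} = \log(1 + x/\sigma^2)$ and then carry out the convexity computation that relaxes Bennett's $h(u)$ to the rational expression $u^2/(2(1+u))$ appearing in the statement. Once that elementary inequality is in hand, assembling the bound is mechanical.
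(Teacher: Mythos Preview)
Your proof is correct and is precisely the standard Chernoff--Cram\'er argument that the paper defers to: the paper does not write out its own proof but simply cites \cite[M20]{Bil99} ``with a slight modification,'' and what you have supplied is exactly that textbook derivation (exponential Markov, the $|X_i|^k\le X_i^2$ moment bound, optimization to Bennett's function, then the convexity estimate $h(u)\ge u^2/(2(1+u))$). Nothing is missing.
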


\begin{proof}
The proof of \cite[M20]{Bil99} with a slight modification works.
\end{proof}

\begin{proof}[Proof of Lemma \ref{lem:neg_reduct}]
Choose $1/2 \ge \eta_1 \ge \eta_2 \cdots$
with $\eta_n \to 0$ such that
\begin{equation} \label{eq:weak_lind_vanish}
\frac{1}{n} \sum_{i,j=1}^n \P(|\w|>\eta_n) \to 0.
\end{equation}

\emph{Proof of (1).}
By Lemma \ref{lem:as_reduct}, it is enough to show that
$\mu_{W_n}\tto\musc$ a.s.\ if and only if $\mu_{W_n'}\tto\musc$ a.s.
Let $\widetilde{W} := (\w\I\{|\w|\le\eta_n\})_{i,j=1}^n$.
We will first show that $\mu_{W_n} \tto \musc$ a.s.\ if and only if
$\mu_{\widetilde{W}_n} \tto \musc$ a.s.
Note that
\[ \frac{\rank(W_n-\widetilde{W}_n)}{n}
\le \frac{2}{n} \sum_{1\le i\le j\le n} \I\{|\w|>\eta_n\}. \]
By Lemma \ref{lem:rank_ineq}, it is enough to show that the right side
tends to $0$ a.s.

Let $\eps > 0$ be given. By \eqref{eq:weak_lind_vanish}, we have some
$n_0 \in \N$ such that
\[ \sum_{1\le i\le j\le n} \P(|\w| > \eta_n) \le \eps n/2
\qquad \text{for all $n \ge n_0$.} \]
Since $\I\{|\w| > \eta_n\}$, $1 \le i \le j \le n$, are independent,
Bernstein's inequality (Lemma \ref{lem:bernstein_ineq}) implies
\[
\begin{split}
\P\biggl( \sum_{1\le i\le j\le n} &\I\{|\w|>\eta_n\} \ge \eps n \biggr) \\
&\le \P\biggl( \sum_{1\le i\le j\le n} \bigl(\I\{|\w|>\eta_n\}
- \P(|\w|>\eta_n)\bigr) \ge \eps n/2 \biggr) \\
&\le \exp\biggl(\frac{-\eps^2 n^2/8}{\sum_{1\le i\le j\le n}
\P(|\w|>\eta_n) + \eps n/2}\biggr) \\
&\le \exp\biggl(\frac{-\eps^2 n^2/8}{\eps n}\biggr) = \exp(-\eps n/8).
\end{split}
\]

As
\[
\sum_{n=1}^\infty \exp(-\eps n/8) < \infty \qquad \text{for all $\eps > 0$,}
\]
the Borel-Cantelli lemma implies
\[
\frac{1}{n} \sum_{1\le i\le j\le n} \I\{|\w|>\eta_n\} \to 0 \qquad \text{a.s.}
\]
This implies that $\mu_{W_n} \tto \musc$ a.s.\ if and only if
$\mu_{\widetilde{W}_n} \tto \musc$ a.s.\ as explained above.

To show that $\mu_{\widetilde{W}_n} \tto \musc$ a.s.\ if and only if
$\mu_{W_n'} \tto \musc$ a.s., use Lemma~\ref{lem:pert_ineq} to note that
\[
\bigl(L(\mu_{\widetilde{W}_n},\mu_{W_n'})\bigr)^3
\le \frac{1}{n}\sum_{i,j=1}^n \bigl(\E[\w;|\w|\le\eta_n]\bigr)^2.
\]
Since $|a^2-b^2| = |a-b||a+b|$, the difference between the right side
and
\[ \frac{1}{n} \sum_{i,j=1}^n \bigl(\E[\w;|\w|\le1]\bigr)^2 \]
is bounded above by
\[
\begin{split}
\frac{1}{n} \sum_{i,j=1}^n \bigl|\E[\w;\eta_n<|\w|\le1]\bigr| &\cdot
\bigl|\E[\w;|\w|\le1]+\E[\w;|\w|\le\eta_n]\bigr| \\
&\le \frac{2}{n}\sum_{i,j=1}^n \P(|\w|>\eta_n) \to 0.
\end{split}
\]
Thus, by \eqref{eq:weak_zero}, we have $L(\mu_{\widetilde{W}_n},\mu_{W_n'})
\to 0$ a.s.

\emph{Proof of (2).}
Since $\bigl||a|-|b|\bigr| \le |a-b|$, we have
\[
\begin{split}
\biggl|
\frac{1}{n} \sum_{i=1}^n &\Bigl|\sum_{j=1}^n \E[|\w|^2;|\w|\le1]-1 \Bigr| -
\frac{1}{n} \sum_{i=1}^n \Bigl| \sum_{j=1}^n \E|\w'|^2 - 1 \Bigr|
\biggr| \\
&\le \frac{1}{n}\sum_{i,j=1}^n \Bigl|
\E[|\w|^2;|\w|\le1] - \E|w'_{ij}|^2 \Bigr|  \\
&= \frac{1}{n} \sum_{i,j=1}^n \E[|\w|^2;\eta_n < |\w| \le 1]
+ \frac{1}{n} \sum_{i,j=1}^n \bigl(\E[\w;|\w|\le\eta_n]\bigr)^2.
\end{split}
\]
The first term on the right side is bounded above by
\[ \frac{1}{n} \sum_{i,j=1}^n \P(|\w| > \eta_n) \to 0, \]
and we have shown just above that the second term also tends to $0$.

\emph{Proof of (3).}
The difference between the left sides of \eqref{eq:weak_margin} and
\eqref{eq:weak_margin_prime} is also bounded above by
\[
\frac{1}{n} \sum_{i,j=1}^n \E[|\w|^2;\eta_n < |\w| \le 1]
+ \frac{1}{n} \sum_{i,j=1}^n \bigl(\E[\w;|\w|\le\eta_n]\bigr)^2,
\]
which tends to $0$ as $n\to\infty$.
\end{proof}

\begin{proof}[Proof of Lemma~\ref{lem:reduct}]
Assume that $W_n$ is given as in Theorem~\ref{thm:wigner},
and define $W'_n$ as in Lemma~\ref{lem:neg_reduct}.
If we let $\eps_n := 2\eta_n$, then $(W_n')_{n\in\N}$ satisfies
the conditions of Theorem~\ref{thm:red_wigner}.
In particular, \eqref{eq:margin} for $W_n'$ follows from
(3) of Lemma~\ref{lem:neg_reduct}.

\emph{Proof of (1).}
Assume that the sufficiency part of Theorem~\ref{thm:red_wigner} holds.
If \eqref{eq:weak_row_one} holds, then \eqref{eq:weak_row_one_prime} holds
by (2) of Lemma~\ref{lem:neg_reduct}, and thus $\E\mu_{W_n'}\tto\musc$
by the sufficiency part of Theorem~\ref{thm:red_wigner}.
Then (1) of Lemma~\ref{lem:neg_reduct} tells us that $\E\mu_{W_n}\tto\musc$.

\emph{Proof of (2).}
Assume that the necessity part of Theorem~\ref{thm:red_wigner} holds.
If $\E\mu_{W_n} \tto \musc$, then $\E\mu_{W_n'}\tto\musc$ by
(1) of Lemma~\ref{lem:neg_reduct}, and thus \eqref{eq:weak_row_one_prime}
holds by Theorem~\ref{thm:red_wigner}.
This in turn implies \eqref{eq:weak_row_one} by (2) of
Lemma~\ref{lem:neg_reduct}.
\end{proof}

% References
\bibliographystyle{alpha}
\bibliography{references}
\end{document}